\newcommand{\cA}{\mathcal{A}}
\newcommand{\cC}{\mathcal{C}}
\newcommand{\cZ}{\mathcal{Z}}
\newcommand{\cE}{\mathcal{E}}
\newcommand{\cI}{\mathcal{I}}
\newcommand{\cD}{\mathcal{D}}
\newcommand{\cG}{\mathcal{G}}
\newcommand{\cT}{\mathcal{T}}
\newcommand{\cH}{\mathcal{H}}
\newcommand{\cU}{\mathcal{U}}
\newcommand{\cV}{\mathcal{V}}
\newcommand{\cX}{\mathcal{X}}
\newcommand{\cY}{\mathcal{Y}}
\newcommand{\cS}{\mathcal{S}}
\newcommand{\bN}{\mathbb{N}}
\newcommand{\cW}{\mathcal{W}}
\newcommand{\bZ}{\mathbb{Z}}
\newcommand{\point}{{\scriptscriptstyle\bullet}}
\DeclareMathOperator{\Cone}{Cone}
\DeclareMathOperator{\Coker}{Coker}
\DeclareMathOperator{\coker}{Coker}
\DeclareMathOperator{\Hom}{Hom}
\DeclareMathOperator{\End}{End}
\DeclareMathOperator{\Ext}{Ext}
\DeclareMathOperator{\Tor}{Tor}
\DeclareMathOperator{\Ker}{Ker}
\DeclareMathOperator{\E}{E}
\DeclareMathOperator{\Ind}{Ind}
\DeclareMathOperator{\id}{id}
\DeclareMathOperator{\R}{\mathbf{R}}
\newcommand{\HoColim}{\mathrm{HoColim}}
\newcommand*{\lMod}{\textrm{\textup{-Mod}}}
\newtheorem{theorem}{Theorem}[section]
\newtheorem{lemma}[theorem]{Lemma}
\newtheorem{proposition}[theorem]{Proposition}
\newtheorem{corollary}[theorem]{Corollary}
\theoremstyle{definition}
\newtheorem{definition}[theorem]{Definition}
\newtheorem{remark}[theorem]{Remark}
\newtheorem{example}[theorem]{Example}
\newtheorem{numero}[theorem]{}
\def\dualita#1#2{\mathrel{
                 \mathop{\vcenter{
                 \offinterlineskip
                 \hbox to 1.2truecm{\rightarrowfill}
                 \hbox to 1.2truecm{\leftarrowfill}}}
                 \limits_{#2}^{#1}}}
\def\dual{\mathrel{
                 \mathop{\vcenter{
                 \offinterlineskip
                 \hbox to .6truecm{\rightarrowfill}
                 \hbox to .6truecm{\leftarrowfill}}}
                 }}
\def\cDll#1{{\cD}^{\leq #1}}
\def\cDgg#1{{\cD}^{\geq #1}}
\def\cTll#1{{\cT}^{\leq #1}}
\def\cTgg#1{{\cT}^{\geq #1}}
\def\limind{\mathop{\lim\limits_{\displaystyle\rightarrow}}}
\def\ra{{\rightarrow}}
\newcommand{\hH}{\mathrm{H}}
\begin{document}
\title{A classification theorem for $t$-structures}
\author[L. Fiorot]{Luisa Fiorot}
\author[F. Mattiello]{Francesco Mattiello}
\author[A. Tonolo]{Alberto Tonolo}
\address{ Dipartimento di Matematica, Universit\`a degli studi di Padova, via Trieste 63, I-35121 Padova, Italy}
\email{fiorot@math.unipd.it, mattiell@math.unipd.it, tonolo@math.unipd.it}
\date{}
\keywords{triangulated category, $t$-structure, tilting equivalence}
\thanks{The first author is supported by PRIN 2010-2011 Arithmetic Algebraic Geometry and Number Theory. The second and third authors are supported by Progetto di Eccellenza Fondazione Cariparo ``Algebraic structures
and their applications: Abelian and derived categories, algebraic entropy and representation of algebras''.}

\begin{abstract}
We give a classification theorem for a relevant class of  $t$-structures in triangulated categories, which includes in the case of the derived category of a Grothendieck category, the $t$-structures whose hearts have at most $n$ fixed consecutive non-zero cohomologies.
Moreover, by this classification theorem, we deduce the construction of the \emph{$t$-tree}, a new technique which generalises the filtration induced by a torsion pair.
At last we apply our results in the tilting context 
generalizing the 1-tilting equivalence proved 
by Happel, Reiten and Smal{\o} \cite{MR1327209}. The last section provides applications to classical $n$-tilting objects, examples of $t$-trees for modules over a path algebra, and new developments on compatible $t$-structures \cite{zbMATH04083879}, \cite{KeDCT}.
\end{abstract}

\maketitle
\tableofcontents

\section*{Introduction}

In \cite{MR751966}, Be{\u\i}linson, Bernstein and Deligne introduced the notion of $t$-structure in a triangulated category. A triangulated category $\cC$ can have plenty of $t$-structures, and each of these $t$-structures determines a full abelian subcategory of $\cC$: the heart of the $t$-structure. 
The theory of $t$-structures has several applications in different mathematical areas
as: algebraic analysis, algebraic geometry, motives, K-theory, representation theory, etc. 
\\
Happel, Reiten and Smal\o\ in their seminal paper \cite{MR1327209} introduced a technique to construct, starting from a given $t$-structure $\cD$ and a torsion pair on its heart, a new $t$-structure, called the \emph{tilted $t$-structure with respect to the given torsion pair}. By a result of Polishcuk \cite{MR2324559}, in such a way
one gets all the $t$-structures $\cT$ whose aisles satisfy
\[\cD^{\leq -1}\subseteq\cT^{\leq 0}\subseteq\cD^{\leq 0}.\]
Happel, Reiten and Smal\o\ proved that if the torsion pair we tilt by is \emph{tilting or cotilting}, i.e., if the torsion class is cogenerating or the torsion-free class is generating, then the heart of the new $t$-structure is derived equivalent to the heart of the old one (\cite[Chapter~I, Theorem~3.3]{MR1327209}).
A motivating example for this result is given by classical 1-tilting objects in a Grothendieck category $\cG$: the heart of the $t$-structure obtained by tilting the canonical $t$-structure in the derived category $D(\cG)$ with respect to the torsion pair generated by a classical 1-tilting object is derived equivalent to $\cG$.\\
In his report on \cite{MR1327209} for the Mathematical Reviews,  Rickard observed that ``Although the theory of tilting modules has undergone many fruitful generalizations, the original version, involving tilting modules with projective dimension one, had one aspect that did not generalize. This was the torsion theory on the module category determined by the tilting module.'' A classical tilting object $T$ with projective dimension one in the category $R\lMod$ of left modules over an arbitrary ring $R$ determines the torsion pair whose torsion class is $\{M\in R\lMod:\Ext^1_R(T,M)=0\}$ and whose torsion-free class is $\{M\in R\lMod:\Hom_R(T,M)=0\}$. Therefore every module in $R\lMod$ decomposes in pieces where at most only one among the derived functors of $\Hom_R(T,-)$ acts non trivially. It is well known that we lose this possibility when passing to classical tilting objects in $R\lMod$ with projective dimension greater than one (see \cite{zbMATH01958651}).

In this paper we want to generalise the Happel-Reiten-Smal\o\ result and, meeting Rickard's demand, to recover the torsion torsion-free decomposition, passing, referring to the motivating example,
from classical 1-tilting objects to classical n-tilting objects. 

In particular given a \emph{filterable pair} $(\cD,\cT)$ of $t$-structures \emph{of type $(n,0)$} we prove that
\begin{enumerate}
\item if $(\cD,\cT)$ is  \emph{$n$-(co)tilting} then the hearts of $\cD$ and $\cT$ are derived equivalent (the case $n=1$ recovers \cite[Chapter~I, Theorem~3.3]{MR1327209});
\item for any object in the heart of $\cD$ we construct a finite \emph{tree of short exact sequences} of height $n$ whose \emph{leaves} have at most one $\cT$-cohomology different from zero (the case $n=1$ gets back the usual short exact sequence produced by the torsion pair associated to a classical 1-tilting object \cite{zbMATH03697330}).
\end{enumerate}

The paper is divided into six sections.

In Section $1$, which is of preliminary nature, the basic concepts used later are introduced. Here we recall some definitions and results, most of them well-known, on $t$-structures in triangulated categories and on tilting objects in Grothendieck categories. We discuss briefly the connection between torsion pairs and 
$t$-structures and indicate the relationship with tilting theory (the main references are \cite{MR751966} and \cite{MR2327478}). One of the main tool that we recall is the Happel, Reiten, Smal\o\ construction (see Proposition~\ref{prop:HRS}) which, starting from the heart of a (non-degenerate) $t$-structure on a triangulated category 
$\cC$ and a torsion pair on this heart, permits to produce a new $t$-structure on $\cC$. 

In Section $2$, we introduce the notions of \emph{shift} and \emph{gap} for an ordered pair $(\cD,\cT)$ of 
$t$-structures on a triangulated category $\cC$ (see Definition~\ref{FL}). The motivating example comes from tilting theory, when $\cD$ is the natural $t$-structure on the derived category of a Grothendieck category $\cG$ and $\cT$ is the $t$-structure compactly generated by a classical $n$-tilting object in $\cG$.
Polishchuk in \cite[Lemma 1.2.2]{MR2324559} proved that the pairs of $t$-structures 
$(\cD,\cT)$ satisfying $\cD^{\leq -1} \subseteq \cT^{\leq 0}\subseteq \cD^{\leq0}$
are exactly the pairs in which $\cT$ is obtained by tilting $\cD$ with respect to a torsion pair. Generalizing the result of Polishchuk, we prove that an \emph{iterated HRS procedure} permits to recover all the \emph{right filterable pairs} $(\cD,\cT)$ (see Definition~\ref{def:filterable}) which satisfy the condition 
$\cD^{\leq -m}\subseteq \cT^{\leq 0}\subseteq \cD^{\leq 0}$
(see~Theorem~\ref{teo:genPol1} 
and Corollary~\ref{cor:genPol1}). 

In Section $3$, we introduce and investigate in detail, for a right filterable pair $(\cD,\cT)$ of $t$-structures of gap $n$,a factorization of the objects in the heart of $\cD$ in a finite binary \emph{$t$-tree} of height $n$ (see Definition~\ref{tree}), whose $2^n$ leaves are objects of $\cC$ living in (shifts of) the heart of $\cT$. This $t$-tree generalises the Brenner and Butler factorization of modules induced by the torsion pair generated by a classical 1-tilting module.

In Section $4$, we collect the same results of Sections 2 and 3 in the dual hypothesis of left filterability.

In Section $5$, we define and study the so called \emph{$n$-(co)tilting} $t$-structures: a pair $(\cD,\cT)$ of $t$-structures in a triangulated category $\cC$ is \emph{$n$-tilting} (resp. \emph{$n$-cotilting}) if
$(\cD,\cT)$ is filterable, and the full subcategory $\cH_\cD \cap \cH_\cT$ of $\cH_\cD$ cogenerates $\cH_\cD$ (resp. the full subcategory $\cH_\cD \cap \cH_\cT[-n]$ of $\cH_\cD$ generates $\cH_\cD$). The 1-(co)tilting  pairs of $t$-structures coincide with the $t$-structures induced by (co)tilting torsion pairs studied by Happel, Reiten and Smal\o\ (see~\cite[Ch.~I, \S3]{MR1327209}).
We generalise in Theorem~\ref{genheartderiv} the derived equivalence of Happel, Reiten, Smal\o~\cite[Ch.~I, Theorem~3.3]{MR1327209} to the case $n\geq 1$.

Section $6$ is devoted to some applications. One concerns derived equivalences induced by $n$-tilting objects in a Grothendieck category $\cG$. As remarked before, when $\cD$ is the natural $t$-structure on the derived category of $\cG$ and $\cT$ is the $t$-structure compactly generated by a $n$-tilting object $T$ in $\cG$, then the pair $(\cD,\cT)$ is $n$-tilting. The machinery developed in the previous sections applies, providing a commutative diagram of equivalences which clarifies the derived Morita equivalence induced by $T$ (see Lemma~\ref{lemma:senzator}).
Another application regards compatible $t$-structures, introduced by Keller and Vossieck in \cite{zbMATH04083879}. We prove in Theorem~\ref{teo:leftcomHRS} that given a left-filterable pair $(\cD,\cT)$, the $t$-structure $\cT$ is left $\cD$-compatible if and only if in its generating HRS procedure the torsion classes are all contained in $\cH_\cD$. In particular, we deduce in Corollary~\ref{Cor:tiltnoleftcom} that if $(\cD,\cT)$ is a $n$-tilting (resp. $n$-cotilting) pair of $t$-structures, then 
$\cT$ is left $\cD$-compatible if and only if $n=0$ or $n=1$, as suggested by Keller in \cite[pag. 26]{KeDCT}.
\maketitle
\tableofcontents

\section{Preliminaries}

\textbf{I. Notations.} 
Let $\cC$ be an additive category.
In what follows, any full subcategory of $\cC$ will be strictly full
(i.e., closed under isomorphisms) and additive. Any functor between
additive categories will be an additive functor. 
For any full subcategory $\cS$ of $\cC$ we denote by $^\perp\cS$ the \emph{left orthogonal subcategory} of $\cS$, that is, 
\[
^\perp\cS:=\{X\in \cC \,|\, \Hom_\cC(X,S)=0, \text{~for all~} S\in\cS\},
\]
 and by $\cS^\perp$ the \emph{right orthogonal subcategory} of $\cS$, that is, 
\[
\cS^\perp:=\{X\in \cC \,|\, \Hom_\cC(S,X)=0, \text{~for all~} S\in\cS\}.
\]
If $\cC$ is a triangulated category, we will denote its suspension functor by $[1]$.
\bigskip

\textbf{II. t-structures.} Be{\u\i}linson, Bernstein and Deligne \cite{MR751966} introduced the notion of a $t$-structure in a triangulated category in their study of perverse sheaves on an algebraic or analytic variety. 

Let $\cC$ be a triangulated category.

\begin{definition}\label{t-structure}
A \emph{$t$-structure} in $\cC$ is a pair $\cD:=(\cD^{\leq0},\cD^{\geq0})$ of full subcategories of $\cC$ such that, setting $\cD^{\leq n}:=\cD^{\leq0}[-n]$ and $\cDgg n:=\cDgg0[-n]$, one has:
\begin{enumerate}
\item[\rm (i)] $\cDll0\subseteq\cDll1$ and $\cDgg0\supseteq\cDgg1$;
\item[\rm (ii)] $\Hom_\cC(X,Y)=0$, for every $X$ in $\cDll0$ and every $Y$ in $\cDgg1$;
\item[\rm (iii)] For any object $X\in\cC$ there exists a distinguished triangle 
\[A\to X\to B \to A[1]
\]
in $\cC$, with
$A\in\cDll0$ 
and $B\in\cDgg1$.
\end{enumerate}
The classes $\cD^{\leq 0}$ and $\cD^{\geq 0}$ are called the \emph{aisle} and the \emph{co-aisle} of the $t$-structure $\cD$.
\end{definition}
The following proposition summarizes the basic properties of a $t$-structure.

\begin{proposition} \cite[Proposition~1.3.3, Theorem~1.3.6]{MR751966}
\label{t-adj}
Let $\cD=(\cDll0,\cDgg0)$ be a $t$-structure in a triangulated category $\cC$.
\begin{enumerate}
\item[\rm (i)] The inclusion of $\cDll n$ in $\cC$ admits a right adjoint $\delta^{\leq n}$, and the inclusion of $\cDgg n$ in $\cC$ a left adjoint $\delta^{\geq n}$
 called the {\rm truncation functors}.
\item[\rm (ii)] For every object $X$ in $\cC$ there exists a unique morphism 
$d\colon\delta^{\geq 1}(X)\to \delta^{\leq 0}(X)[1]$ such that the triangle
\[
\delta^{\leq 0}(X)\ra X\ra\delta^{\geq 1}(X)\overset{d}\ra \delta^{\leq 0}(X)[1]
\]
is distinguished. 
This triangle is (up to a unique isomorphism) the unique distinguished triangle 
$(A,X,B)$ with $A$ in $\cDll0$ and $B$ in $\cDgg1$ and it is called
the \emph{approximating triangle} of $X$ (for the $t$-structure $\cD$).
\item[\rm (iii)] The category $\cH_\cD:=\cDll0\cap\cDgg0$ is abelian and is called the {\rm heart} of the $t$-structure. The truncation functors induce functors
$\hH_\cD^i\colon \cC \to \cH_\cD$, $i\in\mathbb Z$, called the {\rm $t$-cohomological functors associated with the $t$-structure $\cD$}, defined as follows:
$\hH_\cD^0(X):=\delta^{\geq0}\delta^{\leq0}(X)\simeq\delta^{\leq0}\delta^{\geq0}(X)$ and for every $i\in \bZ$, 
$\hH_\cD^i(X):=\hH_\cD^0(X[i])$.
\end{enumerate}
\end{proposition}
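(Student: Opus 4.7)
The plan is to prove (ii) first, then extract (i) as a formal consequence, and finally build (iii) on top using the octahedral axiom.

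For (ii), existence of a triangle $A \to X \to B \to A[1]$ with $A \in \cDll{0}$ and $B \in \cDgg{1}$ is precisely axiom (iii) of Definition~\ref{t-structure}. For uniqueness, suppose $A' \to X \to B' \to A'[1]$ is a second such triangle. Applying $\Hom_\cC(A,-)$ to it produces a long exact sequence
$\Hom(A, B'[-1]) \to \Hom(A, A') \to \Hom(A, X) \to \Hom(A, B')$,
and since $B'[-1] \in \cDgg{2} \subseteq \cDgg{1}$ and $B' \in \cDgg{1}$ while $A \in \cDll{0}$, axiom (ii) of the definition kills the flanking terms, producing a unique morphism $A \to A'$ that lifts the given $A \to X$. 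Running the same argument with the roles swapped yields $A' \to A$; both compositions are forced by the same uniqueness to be the identities. A parallel argument on the $B$-side furnishes a unique isomorphism $B \to B'$, and the connecting map $d$ is then pinned down.

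For (i), set $\delta^{\leq 0}(X) := A$ as in the approximating triangle. Functoriality is immediate from (ii): any $f : X \to Y$ in $\cC$ lifts uniquely to $\delta^{\leq 0}(X) \to \delta^{\leq 0}(Y)$. The adjunction is read off by applying $\Hom_\cC(Z,-)$ for $Z \in \cDll{0}$ to the approximating triangle of $X$; the long exact sequence collapses to $\Hom(Z, \delta^{\leq 0}(X)) \simeq \Hom(Z, X)$ since the flanking terms $\Hom(Z, \delta^{\geq 1}(X)[-1])$ and $\Hom(Z, \delta^{\geq 1}(X))$ vanish by axiom (ii). The statement for $\delta^{\geq n}$ follows dually by reversing arrows, and the shift identities $\cDll{n} = \cDll{0}[-n]$, $\cDgg{n} = \cDgg{0}[-n]$ give all shifted truncations from the case $n=0$.

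For (iii), this is the technical core and where I expect the main obstacle. To see that $\cH_\cD$ is abelian, I would first establish that any distinguished triangle $X \to Y \to Z \to X[1]$ in $\cC$ gives a long exact sequence of $\hH_\cD^i$ by an iterated application of the octahedral axiom to the truncations of $X$, $Y$, $Z$ together. Given $f : X \to Y$ in $\cH_\cD$, complete it to a triangle $X \to Y \to C \to X[1]$ in $\cC$; the long exact sequence then identifies $\ker f = \hH_\cD^{-1}(C)$ and $\coker f = \hH_\cD^0(C)$, which are manifestly in $\cH_\cD$. The delicate point, which is where the octahedral axiom really earns its keep, is the abelian axiom: one factors $f$ as $X \twoheadrightarrow I \hookrightarrow Y$ via these truncations and uses the octahedron on this factorization to identify coimage and image. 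Finally, the compatibility $\hH_\cD^0 \simeq \delta^{\geq 0}\delta^{\leq 0} \simeq \delta^{\leq 0}\delta^{\geq 0}$ is a consequence of uniqueness of approximating triangles: both compositions produce objects in $\cDll{0} \cap \cDgg{0}$ satisfying the universal property characterizing $\hH_\cD^0(X)$.
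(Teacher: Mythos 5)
The paper does not prove this proposition; it is quoted verbatim from Be\u{\i}linson--Bernstein--Deligne with a citation to \cite[Proposition~1.3.3, Theorem~1.3.6]{MR751966}, so the only meaningful comparison is with that classical argument. Your treatment of (ii) and (i) is exactly the standard one: existence from axiom (iii) of Definition~\ref{t-structure}, uniqueness up to unique isomorphism from the vanishing $\Hom_\cC(\cDll0,\cDgg1)=0$ applied to the long exact $\Hom$-sequences, functoriality and the adjunction read off from the same orthogonality. (For completeness, the uniqueness of $d$ itself deserves one more line: the identity on $A$ and $X$ extends to an endomorphism $h$ of $B$ compatible with $X\to B$, which is unique because $\Hom(A[1],B)=0$, hence $h=\mathrm{id}$ and $d'=d$.)

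In (iii) there is a genuine ordering problem. You propose to establish the long exact sequence of the $\hH^i_\cD$ \emph{first} and then use it to ``identify'' $\ker f$ and $\coker f$. But exactness of a sequence in $\cH_\cD$ only makes sense once $\cH_\cD$ is known to be abelian, and identifying an object as a kernel or cokernel is a matter of verifying a universal property, not of exactness. The BBD proof runs the other way: given $f\colon X\to Y$ in $\cH_\cD$ with cone $C\in\cD^{[-1,0]}$, one shows directly that $\delta^{\geq 0}C=\hH^0_\cD(C)$ satisfies the universal property of $\coker f$ (apply $\Hom(-,W)$ for $W\in\cH_\cD$ to the triangle $X\to Y\to C$ and use $\Hom(\cDll{-1},\cDgg0)=0$ together with the adjunction of (i)), and dually for $\hH^{-1}_\cD(C)=\ker f$; only the axiom $\mathrm{coim}=\mathrm{im}$ requires the octahedron, as you say, and the long exact cohomology sequence is derived \emph{afterwards}. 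This is a repairable but real gap in the logical architecture of your sketch rather than a missing idea: all the ingredients you name (orthogonality, truncations of the cone, octahedron on the epi--mono factorization) are the right ones, they just cannot be assembled in the order you propose.
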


\begin{remark}\label{opposite}
Let $\cC$ be a triangulated category endowed with a $t$-structure
$\cD=(\cDll0,\cDgg0)$. We recall that its opposite category $\cC^\circ$ is a triangulated category too
and the pair $(\cDgg0,\cDll0)$ defines a  $t$-structure $\cD^\circ$ on $\cC^\circ$.
\end{remark}

Given an abelian category $\cA$, its (unbounded) derived category $D(\cA)$ is a triangulated category 
which admits a $t$-structure, called the \emph{natural $t$-structure}, whose aisle $D(\cA)^{\leq 0}$ 
(resp.  co-aisle $D(\cA)^{\geq 0}$) is the subcategory of complexes without cohomology in positive (resp. negative) degrees. The $t$-cohomological functors associated with the natural $t$-structure are simply denoted  by $\hH^i$, $i\in\mathbb Z$.

\begin{definition}\label{BND}
A $t$-structure $(\cD^{\leq0},\cD^{\geq0})$ in $\cC$ is called \emph{non-degenerate} if $\bigcap_{n\in\bZ}\cD^{\leq n}=0$ and $\bigcap_{n\in\bZ}\cD^{\geq n}=0$.
\end{definition}

\begin{remark}\label{NonDeg}
The property of being non-degenerate implies that an object $C\in\cC$:
\begin{enumerate}
\item[-] vanishes if and only if $\hH_\cD^n(C)=0$, for each $n\in\bZ$;
\item[-] belongs to $\cDll0$ if and only if $\hH_\cD^n(C)=0$, for each $n>0$;
\item[-] belongs to $\cDgg0$ if and only if $\hH_\cD^n(C)=0$, for each $n<0$.
\end{enumerate}
\end{remark}

From now on  we will consider only {\bf non-degenerate} $t$-structures and in particular
we will extensively use the characterization of  $\cDll0$ and $\cD^{\geq 0}$ in cohomological terms.

\begin{definition}\label{def:static}
Given a $t$-structure $\cT$ in $\cC$, we call \emph{$\cT$-static of degree $d$} the objects in $\cC$ belonging to
$\cH_\cT[-d]$, and \emph{$\cT$-static} the objects in $\cC$ which are $\cT$-static of degree $d$
for some $d\in\mathbb Z$. 
\end{definition}

An object $X$ is $\cT$-static of degree $d$ if and only if $X[d]$ belongs to $\cH_\cT$, i.e., $H^j_\cT(X)=0$ for each $j\not=d$.

\begin{definition}
A \emph{torsion pair} in an abelian category $\cA$ is a pair $(\cX,\cY)$ of full subcategories of 
$\cA$ satisfying the following conditions:
\begin{enumerate}
\item[\rm (i)] $\Hom_\cA(X,Y)=0$,  for every $X\in \cX$ and every $Y\in\cY$.
\item[\rm (ii)] For any object $C\in\cA$ there exists a short exact sequence:
\[0 \to X \to C \to Y \to 0\]
in $\cA$, with $X\in\cX$ and $Y\in\cY$.
\end{enumerate}
\end{definition}

A torsion pair $(\cX,\cY)$ in the heart $\cH_\cD$ of a $t$-structure $\cD$ in a triangulated category $\cC$ induces a new $t$-structure $\cT_{(\cX,\cY)}$ in $\cC$:

\begin{proposition} \cite[Ch.~I, Proposition~2.1]{MR1327209}\cite[Proposition~2.5]{bridgeland2005t}\label{prop:HRS}
Let $\cH_\cD$ be the heart of a 
$t$-structure $\cD=(\cDll0,\cDgg0)$ on a triangulated category $\cC$ and let 
$(\cX,\cY)$ be a torsion pair on $\cH_\cD$. Then the pair $\cT_{(\cX,\cY)}:=(\cTll0_{(\cX,\cY)},\cTgg0_{(\cX,\cY)})$ of full subcategories of $\cC$
\[
\begin{matrix}
\cT^{\leq 0}_{(\cX,\cY)}= & \{ C\in \cC\; | \; H_\cD^0(C)\in\cX,\; H_\cD^i(C)=0 \;  \forall i>0 \} \hfill\\
\cT^{\geq 0}_{(\cX,\cY)}= & \{C\in \cC\; | \; H_\cD^{-1}(C)\in\cY,\; H_\cD^i(C)=0 \;  \forall i<-1 \} \\
\end{matrix}
\]
is a $t$-structure on $\cC$. We say that $\cT_{(\cX,\cY)}$ is obtained \emph{by tilting $\cD$ with respect to the torsion pair $(\cX,\cY)$}.\end{proposition}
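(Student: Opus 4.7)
The plan is to verify the three axioms of a $t$-structure in turn. For axiom (i), the containment $\cT^{\leq 0}_{(\cX,\cY)}\subseteq \cT^{\leq 1}_{(\cX,\cY)}$ reduces to the observation that $H^i_\cD(C[-1])=H^{i-1}_\cD(C)$, so the conditions defining membership shift compatibly; the same shift identity takes care of $\cT^{\geq 0}_{(\cX,\cY)}\supseteq \cT^{\geq 1}_{(\cX,\cY)}$. A preliminary reformulation I would record explicitly is that $\cT^{\geq 1}_{(\cX,\cY)}$ consists of objects $C\in\cDgg0$ with $H^0_\cD(C)\in\cY$, which is convenient for the next steps.

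For axiom (ii), take $X\in\cT^{\leq 0}_{(\cX,\cY)}$ and $Y\in\cT^{\geq 1}_{(\cX,\cY)}$; then $X\in\cDll 0$ and $Y\in\cDgg 0$. From the $\cD$-truncation triangle $\delta^{\leq -1}X \to X \to H^0_\cD(X)\to \delta^{\leq -1}X[1]$ and the dual triangle $H^0_\cD(Y)\to Y\to \delta^{\geq 1}Y$ for $Y$, two applications of $\Hom_\cC(-,-)$ and the standard orthogonality for the $t$-structure $\cD$ reduce $\Hom_\cC(X,Y)$ to $\Hom_{\cH_\cD}(H^0_\cD(X),H^0_\cD(Y))$, which vanishes by the defining property of the torsion pair $(\cX,\cY)$.

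The main work is in constructing the approximating triangle of axiom (iii). Given $X\in\cC$, I would start from the $\cD$-approximating triangle $\delta^{\leq 0}X\to X\to \delta^{\geq 1}X\to \delta^{\leq 0}X[1]$ and the torsion decomposition $0\to tH^0_\cD(X)\to H^0_\cD(X)\to fH^0_\cD(X)\to 0$ in $\cH_\cD$. Compose $\delta^{\leq 0}X\twoheadrightarrow H^0_\cD(X)\twoheadrightarrow fH^0_\cD(X)$ and let $A$ be the fiber, yielding a triangle
\[
A\to \delta^{\leq 0}X\to fH^0_\cD(X)\to A[1].
\]
The long exact sequence of $H^*_\cD$ then shows $H^0_\cD(A)=tH^0_\cD(X)\in\cX$ and $H^i_\cD(A)=0$ for $i\neq 0$, so $A\in\cT^{\leq 0}_{(\cX,\cY)}$. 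Applying the octahedral axiom to the composition $A\to \delta^{\leq 0}X\to X$ produces a triangle $A\to X\to B\to A[1]$ together with a triangle
\[
fH^0_\cD(X)\to B\to \delta^{\geq 1}X\to fH^0_\cD(X)[1];
\]
a final long exact sequence computation gives $H^0_\cD(B)=fH^0_\cD(X)\in\cY$ and $H^i_\cD(B)=0$ for $i<0$, so $B\in\cT^{\geq 1}_{(\cX,\cY)}$, which finishes the construction.

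The step I expect to be the main obstacle, or at least the one requiring the most care, is the octahedral construction in axiom (iii): one must arrange the composition in the correct order so that the third triangle it produces is exactly the one exhibiting $B$ as an extension of $\delta^{\geq 1}X$ by $fH^0_\cD(X)$, and then read off the $\cD$-cohomology of $B$ correctly. The rest of the proof is an exercise in chasing truncation triangles and applying the non-degeneracy characterizations of $\cDll 0$ and $\cDgg 0$ recalled in Remark~\ref{NonDeg}.
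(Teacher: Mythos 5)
Your argument is correct and is essentially the proof the paper relies on: the proposition itself is only cited, but the construction of the approximating triangle recorded in Remark~\ref{rem:tauHRS} is the same as yours (your $A$ is the object $\Cone f[-1]$ there, an extension of the torsion part $tH^0_\cD(X)$ by $\delta^{\leq -1}X$, and your second octahedral triangle matches the bottom row of that diagram). One small correction: the long exact sequence for $A\to \delta^{\leq 0}X\to fH^0_\cD(X)$ gives $H^i_\cD(A)\cong H^i_\cD(X)$ for $i<0$, not $H^i_\cD(A)=0$ for all $i\neq 0$; since membership in $\cT^{\leq 0}_{(\cX,\cY)}$ only constrains $H^0_\cD$ and the positive degrees, the conclusion $A\in\cT^{\leq 0}_{(\cX,\cY)}$ is unaffected, but the intermediate claim as written is false for a general $X$.
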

\begin{remark}\label{rem:tauHRS}
The non degeneracy of $\cD=(\cDll0,\cDgg0)$ and the orthogonality of the classes $\cX$ and $\cY$ in $\cH_\cD$ imply the orthogonality of $\cT^{\leq 0}_{(\cX,\cY)}$ and $\cT^{\geq 1}_{(\cX,\cY)}$ in $\cC$.
Let us describe for any object $C$ in $\cC$ the approximating triangle
\[\tau^{\leq 0}C\to C\to \tau^{\geq 1}C\stackrel{+1}{\to}
\]
of $C$ for the $t$-structure $\cT_{(\cX,\cY)}$. Denote by $\delta^{\leq n}$ and $\delta^{\geq n}$ the truncation functors of the $t$-structure $\cD$ and  by $X$ the torsion part of $H^0_\cD(C)$ with respect the torsion pair $(\cX,\cY)$. From the diagram
\[\xymatrix{&X\ar@{^(->}[d]\ar@{-->}[dr]^f\\
{}\delta^{\leq 0}C\ar[r] &H^0_\cD(C)\ar[r]&(\delta^{\leq -1}C)[1]\ar[r]^-{+1}&}
\]
there exists a map $h$ such that the following diagram commutes
\[\xymatrix{X\ar[r]^-f\ar[d]&(\delta^{\leq -1}C)[1]\ar[r]\ar@{=}[d]&\Cone f\ar@{-->}[d]^{h[1]}\ar[r]^-{+1}&\\
H^0_\cD(C)\ar[r]\ar[d]&(\delta^{\leq -1}C)[1]\ar[r]\ar@{=}[d]&(\delta^{\leq 0}C)[1]\ar[d]^{\iota[1]}\ar[r]^-{+1}&\\
\delta^{\geq 0}C\ar[r]&(\delta^{\leq -1}C)[1]\ar[r]&C[1]\ar[r]^-{+1}&
}
\]
The distinguished triangle $\xymatrix{\Cone f[-1]\ar[r]^-{\iota\circ h}&C\ar[r]&\Cone(\iota\circ h)\ar[r]^-{+1}&}$ is the approximating triangle of $C$ with respect to the $t$-structure $\cT_{(\cX,\cY)}$, i.e., $\tau^{\leq 0}(C)=\Cone f[-1]$ and 
$\tau^{\geq 1}(C)=\Cone(\iota\circ h)$. Indeed from
\[0=H^{-1}_\cD((\delta^{\leq -1}C)[1])\to H^0_\cD(\Cone f[-1])\to H^0_\cD X=X\to H^{0}_\cD((\delta^{\leq -1}C)[1])=0\text{ and}\]
\[0=H^{i}_\cD((\delta^{\leq -1}C)[1])\to H^{i+1}_\cD(\Cone f[-1])\to H^{i+1}_\cD X=0\ \forall i\geq 0\]
we get $\Cone f[-1]\in\cT_{(\cX,\cY)}^{\leq 0}$. Next for each $j<0$ we have the commutative diagram
\[\xymatrix{0=H_\cD^{j-1} X\ar[r]\ar[d]& H_\cD^{j}(\delta^{\leq -1}C)\ar[r]\ar@{=}[d]& H_\cD^{j}(\Cone f[-1])\ar[r]\ar[d]& H_\cD^{j} X=0\ar[d]\\
0=H_\cD^{j-1}(\delta^{\geq 0}C)\ar[r]&H_\cD^{j}(\delta^{\leq -1}C)\ar[r]&H_\cD^{j}C\ar[r]&H_\cD^{j}(\delta^{\geq 0}C)=0}
\]
and the exact sequence
\[\xymatrix@-1.5pc{...\ar[r]&H^{-1}_\cD(\Cone f[-1])\ar[r]^-\cong&H^{-1}_\cD C\ar[r]&H^{-1}_\cD(\Cone(\iota\circ h))\ar[r]&{\phantom{AAAAAAAAAA}}\\
{}\ar[r]&H^{0}_\cD(\Cone f[-1])=X\ar@{^(->}[r]&H^{0}_\cD C\ar[r]&H^{0}_\cD(\Cone(\iota\circ h))\ar[r]&H^{1}_\cD(\Cone f[-1])=0.
}
\]
Therefore we get $H_\cD^{j}(\Cone(\iota\circ h))=0$ for each $j<0$, and that $H_\cD^{0}(\Cone(\iota\circ h))\cong H_\cD^{0}(C)/X$ belongs to $\cY$, i.e., $\Cone(\iota\circ h)\in\cT_{(\cX,\cY)}^{\geq 1}$.\\
In particular, from these diagrams it results that $H^0_\cD(\tau^{\leq 0}(C))=H^0_\cD(\Cone f[-1])$ is isomorphic to the $\cX$-torsion part of $H^0_\cD(C)$, $H^i_\cD(\tau^{\leq 0}(C))=0$ for each $i>0$ and $H^j_\cD(\tau^{\leq 0}(C))=H^j_\cD C$ for each $j<0$. Analogously, one obtains that $H^{0}_\cD(\tau^{\geq 1}(C))=H^{0}_\cD(\Cone (\iota\circ h))$ is isomorphic to the $\cY$-torsion free part of $H^0_\cD(C)$, $H^j_\cD(\tau^{\geq 1}(C))=0$ for each $j<0$ and $H^i_\cD(\tau^{\geq 1}(C))=H^i_\cD C$ for each $i>0$.
\end{remark}
We say that the $t$-structure $\cT_{(\cX,\cY)}$ is obtained by \emph{tilting $\cD$ with respect to $(\cX,\cY)$}.
The torsion class $\cX$ is the subcategory of all $\cT_{(\cX,\cY)}$-static objects in $\cH_\cD$ of degree 0; the torsion free class $\cY$ is the subcategory of all $\cT_{(\cX,\cY)}$-static objects in $\cH_\cD$ of degree 1.

\bigskip

\textbf{III. Compactly generated $t$-structures.}
Let $\cC$ be a triangulated category with small direct sums. 

An object $T \in \cC$ is called \emph{compact} if for any  family $\left\{Y_i\right\}_ {i\in I}$ of objects of $\cC$ the canonical morphism of abelian groups:
\[
\bigoplus_{i\in I}\Hom_\cC(T,Y_i) \to \Hom_\cC(T,\bigoplus_{i\in I}Y_i)
\]
is an isomorphism (see~\cite[Definition~1.6]{neeman1996grothendieck}).

\begin{proposition}\cite[Ch.~III, Theorem~2.3]{MR2327478}\label{compgen}
Let $\cC$ be a triangulated category with small direct sums and suppose that $\mathfrak{T}$ is a set of compact objects of $\cC$. Then the following pair $(\cTll0_{\mathfrak{T}}, \cTgg0_{\mathfrak{T}})$ of full subcategories determines a $t$-structure $\cT_{\mathfrak{T}}$ in $\cC$:
\[
\cTgg0_{\mathfrak{T}}=\{Y\in \cC \, |  \Hom_\cC(T,Y[n])=0, \text{~for all~} T\in\mathfrak{T} 
\text{~and~} n<0   \, \}, \quad \cTll0_{\mathfrak{T}}={}^\perp(\cTgg0_{\mathfrak{T}}).
\]
The $t$-structure $(\cTll0_{\mathfrak{T}}, \cTgg0_{\mathfrak{T}})$ is called to be \emph{compactly generated} by the set of compact objects $\mathfrak{T}$.
\end{proposition}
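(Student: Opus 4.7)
The plan is to verify the three axioms of Definition~\ref{t-structure}. From the defining formula for $\cTgg0_{\mathfrak{T}}$, an object $Y$ lies there precisely when $\Hom_\cC(T[k],Y)=0$ for every $T\in\mathfrak{T}$ and every $k>0$; imposing also the case $k=0$ gives $\cTgg1_{\mathfrak{T}}$, which is therefore contained in $\cTgg0_{\mathfrak{T}}$. The corresponding inclusion $\cTll0_{\mathfrak{T}}\subseteq\cTll1_{\mathfrak{T}}$ is automatic because a left orthogonal class is closed under $[1]$, and together with the definition of $\cTll0_{\mathfrak{T}}$ as the appropriate left orthogonal this also yields the orthogonality axiom (ii).

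The essential step is axiom (iii): producing, for each $X\in\cC$, a distinguished triangle $A\to X\to B\to A[1]$ with $A\in\cTll0_{\mathfrak{T}}$ and $B\in\cTgg1_{\mathfrak{T}}$. I would use the standard iterated-killing construction, a triangulated incarnation of the small object argument. Set $X_0:=X$ and, inductively, let $J_n$ be the set of all morphisms $T[k]\to X_n$ with $T\in\mathfrak{T}$ and $k\geq 0$; form the canonical morphism $\alpha_n\colon\bigoplus_{f\in J_n}T[k]\to X_n$ and set $X_{n+1}:=\Cone(\alpha_n)$. Take $B:=\hocolim_n X_n$ and let $A$ be the fiber of the canonical map $X\to B$; by an application of the octahedral axiom, $A$ is identified with $\hocolim_n F_n$, where $F_n:=\mathrm{fib}(X\to X_n)$.

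For the co-aisle side, the compactness of each $T[k]$ yields
\[
\Hom_\cC(T[k],B)\cong\varinjlim_n\Hom_\cC(T[k],X_n),
\]
and by construction every element of $\Hom_\cC(T[k],X_n)$ is killed by the transition map $X_n\to X_{n+1}$, so the colimit vanishes and $B\in\cTgg1_{\mathfrak{T}}$. For the aisle side, each $F_n$ belongs to the smallest full subcategory of $\cC$ that contains $\{T[k]:T\in\mathfrak{T},\,k\geq 0\}$ and is closed under extensions, the shift $[1]$, and arbitrary direct sums; this subcategory is contained in $\cTll0_{\mathfrak{T}}$ because the latter, being a left orthogonal, enjoys the same three closure properties and already contains the generators $T[k]$. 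Applying $\Hom_\cC(-,Y)$ for $Y\in\cTgg1_{\mathfrak{T}}$ to the distinguished triangle $\bigoplus_n F_n\xrightarrow{1-\mathrm{shift}}\bigoplus_n F_n\to A\to\bigoplus_n F_n[1]$ defining the hocolim, and using that each $\Hom_\cC(F_n,Y)=0$, one reads off $\Hom_\cC(A,Y)=0$, so $A\in\cTll0_{\mathfrak{T}}$.

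The main obstacle is the joint use of both sides of compactness: on the co-aisle side one needs $\Hom_\cC(T[k],-)$ to commute with sequential hocolim, which is exactly where the compactness hypothesis on $\mathfrak{T}$ is indispensable, while on the aisle side one needs the hocolim-defining triangle $\bigoplus F_n\to\bigoplus F_n\to A$ to propagate orthogonality from the individual $F_n$ to the limit $A$ (a property that fails for general orthogonal classes, but holds here because $\cTll0_{\mathfrak{T}}$ has the correct closure). Once these two points are settled, the remaining bookkeeping (the octahedral identification of $\mathrm{fib}(X\to B)$ with $\hocolim F_n$, and the verification of the three closure properties for the aisle generated by the $T[k]$) is essentially routine.
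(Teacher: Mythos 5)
Your construction is the standard one for compactly generated $t$-structures and is essentially the argument of the source the paper cites (the paper gives no proof of Proposition~\ref{compgen} itself, only the reference to Beligiannis--Reiten): iterated killing of maps from the shifted generators, compactness to compute $\Hom_\cC(T[k],\HoColim X_n)$ as a vanishing colimit, and the closure of the left orthogonal under coproducts, extensions and $[1]$ to place $A$ in the aisle. The two points you single out as the crux are indeed the crux, and the identification of the fibre of $X\to\HoColim X_n$ with $\HoColim F_n$ is the routine $3\times3$/octahedron bookkeeping you describe.

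One indexing point should be made explicit, because it is exactly where your argument rubs against the literal statement. As printed, the proposition sets $\cT^{\leq 0}_{\mathfrak{T}}={}^\perp(\cT^{\geq 0}_{\mathfrak{T}})$, and that class does \emph{not} contain the generators: already for $\mathfrak{T}=\{R\}$ in $D(R)$ one has ${}^\perp\bigl(D(R)^{\geq 0}\bigr)=D(R)^{\leq -1}$, which does not contain $R$, and the pair $\bigl(D(R)^{\leq -1},D(R)^{\geq 0}\bigr)$ fails axiom (iii) of Definition~\ref{t-structure}. What your proof actually establishes --- and what the paper visibly intends, compare the description of $\cT_T^{\leq 0}$ in part IV of the Preliminaries --- is that the aisle is ${}^\perp(\cT^{\geq 1}_{\mathfrak{T}})=\bigl({}^\perp(\cT^{\geq 0}_{\mathfrak{T}})\bigr)[-1]$: both your claim that ``$\cT^{\leq 0}_{\mathfrak{T}}$ already contains the generators $T[k]$, $k\geq 0$'' and your final inference ``$\Hom_\cC(A,Y)=0$ for all $Y\in\cT^{\geq 1}_{\mathfrak{T}}$, hence $A\in\cT^{\leq 0}_{\mathfrak{T}}$'' are valid for ${}^\perp(\cT^{\geq 1}_{\mathfrak{T}})$ and false for ${}^\perp(\cT^{\geq 0}_{\mathfrak{T}})$. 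With the aisle read as ${}^\perp(\cT^{\geq 1}_{\mathfrak{T}})$ throughout, your proof is complete; the only finishing touch is to record that $\Hom_\cC\bigl(\bigoplus_n F_n[1],Y\bigr)=\prod_n\Hom_\cC(F_n,Y[-1])$ also vanishes in the last step, because the co-aisle is closed under $[-1]$.
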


Let us recall the definition of homotopy colimit:

\begin{definition}\cite[\S2]{neeman1996grothendieck}
Suppose that $\cC$ has direct sums. Let
\[
X_0 \overset{f_0}\to X_1 \overset{f_1}\to X_2 \overset{f_2}\to \cdots
\]
be a sequence of objects and morphisms in $\cC$. Then the \emph{homotopy colimit} of this sequence is by definition the mapping cone of the morphism
\[
\xymatrix{
\bigoplus_{i\in\bN}X_i \ar[rr]^{\id-\bigoplus_if_i} && \bigoplus_{i\in\bN}X_i 
}
\]

Dually when $\cC$ admits direct products one gets the notion of \emph{homotopy limit} taking the homotopy colimit in $\cC^\circ$.
\end{definition}

\begin{lemma}\cite[Lemma~2.8]{neeman1996grothendieck}\label{neeman}
Let us assume that $T$ is a compact object in a triangulated category $\cC$ having  direct sums and that
\[
X_0 \to X_1 \to X_2 \to \cdots
\]
is a sequence of objects and morphisms in $\cC$.  
Then the canonical morphism of abelian groups:
\[
\varinjlim\Hom_\cC(T,X_i) \to \Hom_\cC(T,\HoColim X_i)
\]
is an isomorphism.
\end{lemma}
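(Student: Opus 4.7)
The plan is to apply the cohomological functor $\Hom_\cC(T,-)$ to the defining triangle of $\HoColim X_i$ and reduce the statement to a purely algebraic fact about sequential colimits of abelian groups.

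First, I would recall that by definition of the homotopy colimit there is a distinguished triangle
\[
\bigoplus_{i\in\bN} X_i \xrightarrow{\id - \sigma} \bigoplus_{i\in\bN} X_i \longrightarrow \HoColim X_i \longrightarrow \bigoplus_{i\in\bN} X_i[1],
\]
where $\sigma$ is the shift by the structural maps, i.e. on the summand $X_i$ it is $-f_i$ composed with the inclusion $X_{i+1}\hookrightarrow \bigoplus_j X_j$ (up to the usual sign conventions). Applying the cohomological functor $\Hom_\cC(T,-)$ produces a long exact sequence of abelian groups involving $\Hom_\cC(T,\bigoplus_i X_i)$, $\Hom_\cC(T,\HoColim X_i)$, and $\Hom_\cC(T,\bigoplus_i X_i[1])$.

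Second, I would use the compactness of $T$: by hypothesis the canonical map $\bigoplus_i\Hom_\cC(T,Y_i)\to \Hom_\cC(T,\bigoplus_i Y_i)$ is an isomorphism for every family, applied both to $\{X_i\}$ and to $\{X_i[1]\}$. Under these identifications the map induced by $\id-\sigma$ becomes the map
\[
\id - \bigoplus_i (f_i)_* \colon \bigoplus_{i\in\bN}\Hom_\cC(T,X_i) \longrightarrow \bigoplus_{i\in\bN}\Hom_\cC(T,X_i),
\]
and similarly after shifting.

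Third, I would invoke the standard algebraic fact that for any sequential direct system $A_0 \xrightarrow{g_0} A_1 \xrightarrow{g_1}\cdots$ of abelian groups, the sequence
\[
0 \longrightarrow \bigoplus_{i\in\bN} A_i \xrightarrow{\id - \tilde{\sigma}} \bigoplus_{i\in\bN} A_i \longrightarrow \varinjlim A_i \longrightarrow 0
\]
is exact. Applied to $A_i := \Hom_\cC(T,X_i[1])$ this yields injectivity of $\id - \bigoplus(f_i[1])_*$, which forces the connecting piece in the long exact sequence to vanish; applied to $A_i := \Hom_\cC(T,X_i)$ it identifies the cokernel of $\id-\bigoplus(f_i)_*$ with $\varinjlim\Hom_\cC(T,X_i)$. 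Combining these in the long exact sequence gives an isomorphism $\varinjlim\Hom_\cC(T,X_i)\xrightarrow{\sim}\Hom_\cC(T,\HoColim X_i)$.

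The main obstacle I expect is the bookkeeping step of checking that the isomorphism produced by this diagram chase coincides with the canonical map in the statement, rather than some twist of it. This is settled by naturality: both maps are induced by the structural morphisms $X_i\to \HoColim X_i$ coming from the inclusion $X_i\hookrightarrow \bigoplus_j X_j$ followed by the map in the defining triangle, so tracking an element of $\Hom_\cC(T,X_i)$ through both constructions gives the same element of $\Hom_\cC(T,\HoColim X_i)$.
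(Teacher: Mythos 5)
Your argument is correct and is precisely the standard proof of this fact (it is Neeman's own argument for \cite[Lemma~2.8]{neeman1996grothendieck}, which the paper cites rather than reproves): apply the cohomological functor $\Hom_\cC(T,-)$ to the defining triangle of the homotopy colimit, use compactness to replace $\Hom_\cC(T,\bigoplus_i X_i)$ by $\bigoplus_i\Hom_\cC(T,X_i)$, and invoke the presentation of a sequential colimit of abelian groups as the cokernel of the injective map $\id-\tilde\sigma$, whose injectivity on the shifted system kills the connecting homomorphism. The final naturality check identifying the resulting isomorphism with the canonical map is also handled appropriately, so there is nothing to add.
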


\begin{remark}\label{rem:tiltingcoaisle}
If $\cT$ is a compactly generated $t$-structure in $\cC$, then by Lemma~\ref{neeman} its co-aisle is closed under taking homotopy colimits in $\cC$. In particular, $\cT$ is of \emph{finite type} (see \cite[Ch.~III, Definition~1.1]{MR2327478}), i.e., its co-aisle is closed under taking small direct sums  in $\cC$.
\end{remark}

\begin{remark}\label{rem:PrelGroth}
If $\cG$ is a Grothendieck category, that is, $\cG$ is an abelian category with a generator and whose filtered direct limits are representable and exact, then $D(\cG)$ has both small direct sums and small products. In fact, direct sums are obtained by taking term-wise direct sums and products are obtained by taking term-wise products of $K$-injective replacements (recall that $\cG$ has enough injectives). Moreover, the direct sum (resp. product) of a family of distinguished triangles in $D(\cG)$ is a distinguished triangle 
(see~\cite[Proposition~1.2.1 and Remark~1.2.2]{zbMATH01573275}) and both the classes $D(\cG)^{\leq 0}$ and $D(\cG)^{\geq 0}$ are closed under direct sums in $D(\cG)$: therefore $
\cD_\cG:=(D(\cG)^{\leq 0}, D(\cG)^{\geq 0})$ is of finite type.
\end{remark}

\bigskip 

\textbf{IV. Classical $n$-tilting objects and tilting torsion pairs.} 
An object $T$ in a Grothendieck category $\cG$ is called \emph{$n$-tilting} if the following four properties are satisfied:
\begin{enumerate}
\item[(T1)] $T$ is a compact object in $D(\cG)$;
\item[(T2)] $T$ is \emph{rigid}, i.e., $\Ext^i_\cG(T,T)=\Hom_{D(\cG)}(T,T[i])=0$ for each $i>0$;
\item[(T3)] $T$ is a \emph{generator of $D(\cG)$}, i.e., given a non-zero object $X$ in $D(\cG)$ there exists $i\in\mathbb Z$ such that $\Hom_{D(\cG)}(T[i],X)\not=0$;
\item[(T4)] $\Ext^n_\cG(T,-)\not=0$ and $\Ext^{n+1}_\cG(T,-)=0$.
\end{enumerate}
In such a case the derived functor $\R\Hom_\cG(T,-):D(\cG)\to D(\End_\cG(T))$ is a triangle equivalence sending $T$ to $\End_\cG(T)$ (see \cite{zbMATH03975153}, \cite{Happel1987}, \cite{MR1002456}, \cite{MR1258406}). The aisle and the co-aisle of the $t$-structure $\cT_T$ compactly generated by $T$ are equal to
\[
\begin{array}{ll}
\cT_T^{\leq 0}    &=\{X\in D(\cG):\Hom_{D(\cG)}(T, X[i])=0 \text{ for each }i>0\}   \\
    &  =\{X\in D(\cG): H^i(\R\Hom_{\cG}(T,X))=0\; \text{ for each } i>0\}
    \end{array}\]
\[\begin{array}{ll}
   \cT_T^{\geq 0} &:=\{X\in D(\cG):\Hom_{D(\cG)}(T, X[i])=0 \text{ for each }i<0\}   \\
    &  =\{X\in D(\cG):H^i(\R\Hom_{\cG}(T,X))=0\; \text{ for each } i<0\}.
\end{array}\]
The derived functor $\R\Hom_\cG(T,-)$ sends the $t$-structure $\cT_T$ compactly generated by $T$ to the natural $t$-structure in $D(\End_\cG(T))$ compactly generated by $\End_\cG(T)$. A $\cT_T$-static object of degree $d$ in $D(\cG)$ (see Definition~\ref{def:static}) is a complex $X$ such that $H^i\R\Hom_\cG(T,X)=0$ for each $i\not=d$. In particular for a $\cT_T$-static object $M$ of degree $d$ in $\cG$ one has
$\Ext_{\cG}^i(T,M)=0$ for each $i\not=d$: the classes of $\cT_T$-static objects in $\cG$ are the classes $KE_d(T)$, $d=0,1,...,n$,  studied by Miyashita in \cite{Miya}. If $T$ is a classical 1-tilting object, then the class of $\cT_T$-static objects of degree 0 and the class of $\cT_T$-static objects of degree 1 in $\cG$ form a \emph{torsion pair} (see \cite{zbMATH03697330}, \cite{colpi1999tilting}, \cite{colby2004equivalence}); any object in $\cG$ is an extension of a $\cT_T$-static object of degree 1 by a $\cT_T$-static object of degree 0. If $T$ is a classical $n$-tilting object, $n\geq 2$, it is not anymore possible in general to decompose an object in $\cG$ in $\cT_T$-static objects (see \cite{zbMATH01958651} for examples in the case of module categories and a characterisation of modules which are extensions of $\cT_T$-static objects).

A torsion class $\cX$ in an abelian category $\cA$ is a \emph{tilting torsion class} 
(see~\cite[Ch.~I, \S3]{MR1327209}) if $\cX$ cogenerates $\cA$, i.e., for all $A$ in $\cA$ there is $X_A\in\cX$ and a monomorphism $A\hookrightarrow X_A$. The torsion class generated by a classical 1-tilting object in a Grothendieck category (see \cite[Definition~2.3]{colpi1999tilting}) is an example of a tilting torsion class. We recall the fundamental result originally due to Happel, Reiten, Smal\o, and independently improved by Bondal and Van den Bergh in \cite[Proposition~5.4.3]{BonVdBergh} and by Noohi in \cite[Theorem~7.6]{MR2486794}:

\begin{theorem}\cite[Ch.~I, Theorem~3.3]{MR1327209}\label{thm:HRSfundamental} Let $\cT:=\cT_{(\cX,\cY)}$ be the $t$-structure on $D(\cA)$ induced by a tilting torsion pair $(\cX,\cY)$ in $\cA$.
There exists a triangle equivalence $D(\cA)\to D(\cH_\cT)$ between the derived category of $\cA$ and the derived category of the heart $\cH_\cT$ of the $t$-structure $\cT$, which extends the natural inclusion $\cH_\cT\subseteq D(\cA)$.
\end{theorem}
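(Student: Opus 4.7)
The plan is to construct a triangle functor $F\colon D(\cH_\cT) \to D(\cA)$ extending the natural inclusion $\iota\colon \cH_\cT \hookrightarrow D(\cA)$, and then prove it is fully faithful and essentially surjective. For the construction, I would invoke Be{\u\i}linson's realization machinery: one equips $D(\cA)$ with a filtered enhancement in which the tilted $t$-structure $\cT$ lifts, and descends to obtain a triangle functor $D^b(\cH_\cT) \to D^b(\cA)$ restricting to $\iota$ on $\cH_\cT$. Passage to the unbounded setting can be carried out through a dg-enhancement, identifying $D(\cH_\cT)$ with the dg-subcategory of $D(\cA)$ generated by $\cH_\cT$, or equivalently via homotopy colimits, exploiting that $\cA$ is abelian.

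The core of the argument is full faithfulness, which reduces, by a standard triangulated argument, to showing that for all $M,N\in\cH_\cT$ and every $n\geq 0$ the canonical map
\[
\Ext^n_{\cH_\cT}(M,N) \longrightarrow \Hom_{D(\cA)}(M, N[n])
\]
is a bijection. The cases $n=0$ and $n=1$ follow directly from the $t$-structure axioms, since Yoneda extensions in a heart correspond exactly to distinguished triangles in the ambient triangulated category with all three vertices in the heart. The case $n\geq 2$ is the heart of the matter: one needs an \emph{effaceability} property, namely that every $\phi\in\Hom_{D(\cA)}(M, N[n])$ vanishes after pullback along some epimorphism $M'\twoheadrightarrow M$ in $\cH_\cT$. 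Here the tilting hypothesis enters decisively: since $\cX$ cogenerates $\cA$, each object of $\cA$ admits a monomorphism into some $X_A\in\cX$, and combining this with the description of $\cH_\cT$ via $H^{-1}_\cD \in \cY$ and $H^0_\cD \in \cX$ (cf.\ Remark~\ref{rem:tauHRS}) one constructs covers in $\cH_\cT$ whose kernels have improved Ext-vanishing, iterating to kill the obstruction in each degree $n\geq 2$.

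Essential surjectivity reduces to showing $\cA$ lies in the essential image of $F$. For $A\in\cA$, the torsion decomposition $0\to X \to A \to Y \to 0$ with $X\in\cX\subseteq\cH_\cT$ and $Y\in\cY$ (hence $Y[-1]\in\cH_\cT$ by Proposition~\ref{prop:HRS}) gives a distinguished triangle in $D(\cA)$ exhibiting $A$ as an extension of two objects of $\cH_\cT$ (up to a shift). Since $D(\cA)$ is generated, under shifts, distinguished triangles, and homotopy (co)limits, by objects of $\cA$, it follows that the triangulated subcategory generated by $\cH_\cT$ is all of $D(\cA)$, whence $F$ is essentially surjective.

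The main obstacle is the effaceability step for $n\geq 2$: without the cogenerating property of $\cX$, there is no reason for the higher Ext groups computed in $\cH_\cT$ to match those computed in $D(\cA)$, and for a generic torsion pair in a non-tilting situation they genuinely disagree. The delicate point is therefore to translate the cogenerating property of $\cX\subseteq\cA$ into the existence of sufficiently many epimorphic covers inside $\cH_\cT$ to resolve any obstruction class, which is precisely what the tilting hypothesis buys.
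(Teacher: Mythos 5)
First, a point of order: the paper does not prove this statement. It is quoted from Happel--Reiten--Smal{\o} \cite[Ch.~I, Theorem~3.3]{MR1327209} (with the refinements of Bondal--Van den Bergh and Noohi also cited) and is used only as the base case $n=1$ in the proof of Theorem~\ref{genheartderiv}. So there is no in-paper argument to compare yours against; I can only assess your sketch on its own terms. Its architecture is the standard one --- a realization functor $D(\cH_\cT)\to D(\cA)$, full faithfulness reduced to the comparison maps $\Ext^n_{\cH_\cT}(M,N)\to\Hom_{D(\cA)}(M,N[n])$ with $n=0,1$ formal and $n\geq 2$ handled by effaceability, essential surjectivity by generation --- and that reduction is correctly identified.

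The gap is that the one step where the tilting hypothesis does any work is precisely the step you leave as a black box. ``One constructs covers in $\cH_\cT$ whose kernels have improved Ext-vanishing, iterating'' is not an argument. What is actually needed is (i) the lemma that if $\cX$ cogenerates $\cA$ then $\cX$ \emph{generates} $\cH_\cT$, i.e.\ every $M\in\cH_\cT$ admits an epimorphism $X\twoheadrightarrow M$ in $\cH_\cT$ with $X\in\cX$ --- this does not follow in one line from embedding objects of $\cA$ into $\cX$; it requires a genuine manipulation of two-term complexes --- and (ii) the dimension-shifting argument showing such presentations efface all classes in degrees $n\geq 2$, which is nontrivial because $\Hom_{D(\cA)}(X,N[n])$ does \emph{not} vanish for $X\in\cX$, $N\in\cH_\cT$, $n\geq 2$: it is caught in an exact sequence between $\Ext^{n+1}_\cA(X,H^{-1}_\cD N)$ and $\Ext^{n}_\cA(X,H^{0}_\cD N)$, neither of which need vanish. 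Since this is the entire mathematical content of the theorem, the proposal is a roadmap rather than a proof. Two smaller remarks: the passage from $D^b$ to the unbounded $D(\cA)$ (both for constructing the functor and for the generation step in essential surjectivity) is genuinely delicate and cannot be dispatched by invoking ``a dg-enhancement or homotopy colimits''; and ``$Y[-1]\in\cH_\cT$'' should read ``$Y[1]\in\cH_\cT$'', since objects of $\cY$ are $\cT$-static of degree $1$, i.e.\ $\cY\subseteq\cH_\cT[-1]$.
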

\section{A classification theorem for $t$-structures with finite gap}\label{sect:cloth}

Throughout this section $\cC$ is a triangulated category,  and $\cD:=(\cD^{\leq0}, \cD^{\geq0})$,  $\cT:=(\cT^{\leq0}, \cT^{\geq0})$ are two $t$-structures on $\cC$ whose
truncation functors are denoted by $\delta^{\leq0}$, $\delta^{\geq0}$ and
$\tau^{\leq0}$, $\tau^{\geq0}$ respectively. 
We denote by $\cH_\cD$ and $\cH_\cT$ the hearts of 
$\cD$ and $\cT$, and by $H_\cD$ and $H_\cT$ the associated $t$-cohomological functors. 
We will also use
the notation $\cD^{[a,b]}=\cD^{\geq a}\cap \cD^{\leq b}$, where $a\leq b \in \bZ$.

\begin{definition}\label{FL}
We say that a pair of $t$-structures $(\cD,\cT)$ has \emph{shift $k\in\mathbb Z$} and \emph{gap} $n\in\Bbb N$
if $k$ is the maximal number such that $\cT^{\leq k} \subseteq \cD^{\leq 0}$ (or equivalently $\cD^{\geq 0}\subseteq \cT^{\geq k}$) and $n$ is the minimal number such that $\cD^{\leq -n}\subseteq \cT^{\leq k}$ (or equivalently $\cT^{\geq k} \subseteq \cD^{\geq -n}$). Such a $t$-structure will be called of \emph{type} $(n,k)$.
 \end{definition}
 
Intuitively in a pair of $t$-structures $(\cD,\cT)$ of type $(n,k)$, the shift $k$ permits to center the interval, while the gap $n$ gives the wideness of the interval:
\[\cD^{\leq -n}\subseteq \cT^{\leq k} \subseteq \cD^{\leq 0}\quad\text{or equivalently}\quad
\cD^{\geq 0}\subseteq \cT^{\geq k} \subseteq \cD^{\geq -n}.\]

 \begin{lemma}\label{heart-incl}
 If  $(\cD,\cT)$ is of type $(n,k)$, then
 the pair $(\cT,\cD)$ is of type $(n,-n-k)$. Moreover
 \[\cH_\cT[-k] \subseteq\cD^{[-n,0]}\text{ and }\cH_{\cD}[k] \subseteq\cT^{[0,n]}.\]
 In particular the possible non zero $\cD$-static objects in $\cH_\cT$ have degree between $-n-k$ and $-k$, while the possible non zero $\cT$-static objects in $\cH_\cD$ have degree between $k$ and $n+k$.
 \end{lemma}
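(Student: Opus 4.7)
The plan is to unwind everything via the standard shift identities $\cD^{\leq a}[m]=\cD^{\leq a-m}$ and $\cD^{\geq a}[m]=\cD^{\geq a-m}$ (valid for any $t$-structure), together with the two equivalent reformulations recorded in Definition~\ref{FL}: the chain $\cD^{\leq -n}\subseteq\cT^{\leq k}\subseteq\cD^{\leq 0}$ is the same as $\cD^{\geq 0}\subseteq\cT^{\geq k}\subseteq\cD^{\geq -n}$.

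For the type of $(\cT,\cD)$ the candidate is $(n',k')=(n,-n-k)$. Shifting $\cD^{\leq -n}\subseteq\cT^{\leq k}$ by $[k]$ gives $\cD^{\leq -n-k}\subseteq\cT^{\leq 0}$, and shifting $\cT^{\leq k}\subseteq\cD^{\leq 0}$ by $[n+k]$ gives $\cT^{\leq -n}\subseteq\cD^{\leq -n-k}$. To check maximality of $k'=-n-k$ I observe that any inclusion $\cD^{\leq -n-k+1}\subseteq\cT^{\leq 0}$, shifted back by $[-k]$, would produce $\cD^{\leq -(n-1)}\subseteq\cT^{\leq k}$, contradicting minimality of $n$. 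Analogously, $\cT^{\leq -(n-1)}\subseteq\cD^{\leq -n-k}$ shifted by $[-(n+k)]$ would yield $\cT^{\leq k+1}\subseteq\cD^{\leq 0}$, contradicting maximality of $k$; so $n'=n$ as well.

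The two heart inclusions then follow directly. Since $\cH_\cT[-k]=\cT^{\leq k}\cap\cT^{\geq k}$, the hypotheses $\cT^{\leq k}\subseteq\cD^{\leq 0}$ and $\cT^{\geq k}\subseteq\cD^{\geq -n}$ intersect to give $\cH_\cT[-k]\subseteq\cD^{[-n,0]}$. Dually, $\cH_\cD[k]=\cD^{\leq -k}\cap\cD^{\geq -k}$; shifting $\cD^{\leq -n}\subseteq\cT^{\leq k}$ by $[k-n]$ produces $\cD^{\leq -k}\subseteq\cT^{\leq n}$, while shifting $\cD^{\geq 0}\subseteq\cT^{\geq k}$ by $[k]$ produces $\cD^{\geq -k}\subseteq\cT^{\geq 0}$, whence $\cH_\cD[k]\subseteq\cT^{[0,n]}$.

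The final statement about static objects is then a one-line consequence of non-degeneracy. Shifting the two heart inclusions just proved by $[k]$ and $[-k]$ respectively gives $\cH_\cT\subseteq\cD^{[-n-k,-k]}$ and $\cH_\cD\subseteq\cT^{[k,n+k]}$. By Remark~\ref{NonDeg}, any $X\in\cH_\cT$ satisfies $\hH^j_\cD(X)=0$ for $j\notin[-n-k,-k]$; if $X$ is moreover non-zero and $\cD$-static of some degree $d$, then $\hH^j_\cD(X)$ vanishes except at $j=d$, forcing $d\in[-n-k,-k]$. The $\cT$-static case in $\cH_\cD$ is entirely symmetric. No step presents any real obstacle; the only thing to watch is the direction of the sign in the shift identity $\cD^{\leq a}[m]=\cD^{\leq a-m}$.
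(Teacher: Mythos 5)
Your proof is correct and follows essentially the same route as the paper: shift the defining inclusions $\cD^{\leq -n}\subseteq\cT^{\leq k}\subseteq\cD^{\leq 0}$ by the appropriate powers of $[1]$, intersect to get the heart inclusions, and read off the static degrees from $\cH_\cT\subseteq\cD^{[-n-k,-k]}$ and $\cH_\cD\subseteq\cT^{[k,n+k]}$. If anything, you are slightly more careful than the paper, which asserts the maximality of $-n-k$ and minimality of $n$ for the pair $(\cT,\cD)$ without spelling out the contradiction argument you give.
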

 \begin{proof}
 The pair of $t$-structures $(\cD,\cT)$ is of type $(n,k)$ if $k$ is the maximal number and $n$ is the minimal number such that
 $\cT^{\leq k}\subseteq \cD^{\leq 0} \subseteq \cT^{\leq n+k}$.
 Applying the suspension functor $[k+n]$ we get 
$\cT^{\leq -n}\subseteq \cD^{\leq -k-n} \subseteq \cT^{\leq 0}$;
moreover $-k-n$ is the maximal number such that $\cD^{\leq -k-n} \subseteq \cT^{\leq 0}$ and $n$ is the minimal number such that $\cT^{\leq -n}\subseteq \cD^{\leq -k-n}$: this means that the pair $(\cT,\cD)$ has shift $-n-k$ and gap $n$. Moreover
 \[\cH_\cT[-k] =\cT^{\leq k}\cap \cT^{\geq k}\subseteq \cD^{\leq 0}\cap \cD^{\geq -n}=\cD^{[-n,0]}.\]
 The second inclusion follows analogously. Finally we get the last statement since $\cH_\cT\subseteq\cD^{[-n-k,-k]}$ and
 $\cH_\cD\subseteq\cT^{[k,n+k]}$.
 \end{proof}
 
The $t$-structure $\cT_{(\cX,\cY)}$ obtained by tilting $\cD$ with respect to a torsion pair $(\cX,\cY)$ in the heart $\cH_\cD$ of a $t$-structure $\cD$ in $\cC$ (see Proposition~\ref{prop:HRS}) satisfies
\[
\cD^{\leq -1}\subseteq \cT_{(\cX,\cY)}^{\leq 0}\subseteq \cD^{\leq 0}\quad\text{or equivalently}\quad
\cD^{\geq 0}\subseteq \cT_{(\cX,\cY)}^{\geq 0}\subseteq \cD^{\geq -1}.
\]
Polishchuk in \cite[Lemma 1.2.2]{MR2324559} proved that any pair of $t$-structures verifying the latter condition is obtained by tilting with respect to a torsion pair:
\begin{proposition}\cite[Lemma 1.2.2]{MR2324559}\label{polisch}
A pair $(\cD,\cT)$ of $t$-structures in a triangulated category $\cC$ verifies 
\[\cD^{\leq -1}\subseteq \cT^{\leq 0}\subseteq \cD^{\leq 0}\quad\text{or equivalently}\quad
\cD^{\geq 0}\subseteq \cT^{\geq 0}\subseteq \cD^{\geq -1}\]
if and only if $\cT$ is a $t$-structure obtained by tilting $\cD$ with respect to a torsion pair in 
$\cH_\cD$. In such a case the torsion pair one tilts by is 
\[(\cX,\cY):=(\cT^{\leq 0}\cap \cH_\cD, \cT^{\geq 1}\cap \cH_\cD).\]
\end{proposition}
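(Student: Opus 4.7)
The plan is to prove each implication directly, via cohomological truncations. The ``if'' direction is essentially built into Proposition~\ref{prop:HRS}: the explicit cohomological description of $\cT_{(\cX,\cY)}^{\leq 0}$ and $\cT_{(\cX,\cY)}^{\geq 0}$ given there immediately yields the inclusions $\cD^{\leq -1} \subseteq \cT_{(\cX,\cY)}^{\leq 0} \subseteq \cD^{\leq 0}$, so nothing more is needed.

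For the ``only if'' direction, I would set $\cX := \cT^{\leq 0} \cap \cH_\cD$ and $\cY := \cT^{\geq 1} \cap \cH_\cD$ and prove two things: (a) $(\cX,\cY)$ is a torsion pair in $\cH_\cD$; (b) $\cT = \cT_{(\cX,\cY)}$. For orthogonality in (a), note that $\Hom_{\cC}(X,Y)=0$ for $X\in\cT^{\leq 0}$, $Y\in\cT^{\geq 1}$, by the defining axiom of the $t$-structure $\cT$. For the torsion sequence of an object $A\in\cH_\cD$, take the approximating triangle of $A$ with respect to $\cT$,
\[
\tau^{\leq 0}(A)\to A\to \tau^{\geq 1}(A)\to \tau^{\leq 0}(A)[1].
\]
By hypothesis $\tau^{\leq 0}(A)\in\cT^{\leq 0}\subseteq\cD^{\leq 0}$ and $\tau^{\geq 1}(A)\in\cT^{\geq 1}\subseteq\cD^{\geq 0}$. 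Writing the long exact sequence of $\cD$-cohomology and using $H_\cD^i(A)=0$ for $i\neq 0$, one forces $H_\cD^i(\tau^{\leq 0}(A))=0$ for $i<0$ and $H_\cD^i(\tau^{\geq 1}(A))=0$ for $i>0$. Hence both truncations lie in $\cH_\cD$, so $\tau^{\leq 0}(A)\in\cX$ and $\tau^{\geq 1}(A)\in\cY$, and the triangle descends to a short exact sequence in $\cH_\cD$.

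For (b), it suffices to identify the aisles: $\cT^{\leq 0}=\cT_{(\cX,\cY)}^{\leq 0}$. If $C\in\cT^{\leq 0}$, then $C\in\cD^{\leq 0}$ gives $H_\cD^i(C)=0$ for $i>0$; looking at the $\cD$-approximating triangle $\delta^{\leq -1}(C)\to C\to H_\cD^0(C)\to\delta^{\leq -1}(C)[1]$ and using $\delta^{\leq -1}(C)\in\cD^{\leq -1}\subseteq\cT^{\leq 0}$, the stability of the aisle $\cT^{\leq 0}$ under cones forces $H_\cD^0(C)\in\cT^{\leq 0}\cap\cH_\cD=\cX$. Conversely, if $H_\cD^i(C)=0$ for $i>0$ and $H_\cD^0(C)\in\cX$, the same triangle exhibits $C$ as an extension of two objects of $\cT^{\leq 0}$, hence $C\in\cT^{\leq 0}$.

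The step I expect to be the most delicate is showing that the two components of the $\cT$-approximating triangle of an object $A\in\cH_\cD$ both belong to $\cH_\cD$; this is where the full strength of the hypothesis $\cD^{\leq -1}\subseteq \cT^{\leq 0}$ (equivalently $\cT^{\geq 1}\subseteq \cD^{\geq 0}$) is used, combined with the long exact cohomology argument. Everything else is a direct verification against the cohomological description of $\cT_{(\cX,\cY)}$ provided by Proposition~\ref{prop:HRS}.
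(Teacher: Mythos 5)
Your proof is correct. The paper does not actually prove this proposition — it is quoted from Polishchuk — so there is no in-text argument to compare against, but your reconstruction is the standard one: the long exact sequence of $\cD$-cohomology applied to the $\cT$-approximating triangle of $A\in\cH_\cD$, plus the identification of the aisles via the $\cD$-truncation triangle. The only point worth flagging is that your step ``cohomology concentrated in degree $0$ implies membership in $\cH_\cD$'' uses the non-degeneracy of $\cD$ (Remark~\ref{NonDeg}), which is the paper's standing assumption, so this is legitimate here but should be acknowledged.
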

\begin{remark}\label{rem:tipo000110}
A pair of $t$-structures $(\cD, \cT)$ satisfies
\[
\cD^{\leq -1}\subseteq \cT^{\leq 0}\subseteq \cD^{\leq 0}\quad\text{or equivalently}\quad
\cD^{\geq 0}\subseteq \cT^{\geq 0}\subseteq \cD^{\geq -1}
\]
if and only if it is of type $(0,0)$, $(0,1)$ or $(1,0)$. In the first two cases the torsion pairs we tilt by are the trivial ones: indeed 
the case of type $(0,0)$ corresponds to tilting with respect to the torsion pair $(\cH_{\cD}, 0)$, and that of type $(0,1)$ corresponds to tilting with respect to the torsion pair $(0, \cH_{\cD})$.
Moreover we note that in all the three cases the torsion class $\cH_\cD\cap \cT^{\leq 0}=\cH_\cD\cap\cH_\cT$ coincides with the class of objects in $\cH_\cD$ which are $\cT$-static of degree $0$, while the torsion-free class $\cH_\cD\cap\cT^{\geq 1}=\cH_\cD\cap\cH_\cT[-1]$ coincides with the class of objects in $\cH_\cD$ which are $\cT$-static of degree $1$ (see Definition~\ref{def:static}). Then each object in $\cH_\cD$ is an extension of $\cT$-static objects in $\cH_\cD$; we have the same phenomenon we encountered in the derived category of a Grothendieck category $\cG$ when $\cD$ is the natural $t$-structure and $\cT$ is the $t$-structure $\cT_T$ generated by a classical 1-tilting object $T$ in $\cG$ (see Section IV in Preliminaries).
\end{remark}

Our aim is to generalise the Polishchuk result describing for all $m\in\mathbb N$ all the pairs of $t$-structures $(\cD,\cT)$ satisfying
\[
\cD^{\leq -m}\subseteq \cT^{\leq 0}\subseteq \cD^{\leq 0}\quad\text{or equivalently}\quad
\cD^{\geq 0}\subseteq \cT^{\geq 0}\subseteq \cD^{\geq -m}.
\]
\begin{remark}\label{rem:tipo} Observe that a pair $(\cD,\cT)$ satisfies $\cD^{\geq 0}\subseteq \cT^{\geq 0}\subseteq \cD^{\geq -m}$ if and only if it is of type $(n,k)\in\mathbb N\times \mathbb N$ with $n+k\leq m$: indeed, the numbers $k$ and $n$ are the maximal and minimal respectively such that
\[
\cD^{\geq 0}\subseteq \cD^{\geq -k}\subseteq\cT^{\geq 0} \subseteq \cD^{\geq -k-n}\subseteq \cD^{\geq -m}.
\]
\end{remark}

To construct a pair of $t$-structures satisfying $\cD^{\geq 0}\subseteq \cT^{\geq 0}\subseteq \cD^{\geq -m}$ it is natural to iterate the procedure of tilting with respect to a torsion pair.

\begin{numero}\textbf{Iterated HRS procedure}.\label{num:HRSproc}
Let $\cE_0$ be a $t$-structure in a triangulated category $\cC$. Given a torsion pair $(\cW_0,\cZ_0)$ in the abelian category $\cH_{\cE_0}$, tilting $\cE_0$ with respect to $(\cW_0,\cZ_0)$ one gets in $\cC$ a new $t$-structure $\cE_1$. Consider now a torsion pair $(\cW_1,\cZ_1)$ in $\cH_{\cE_1}$; repeating the same procedure one obtains in $\cC$ the tilted $t$-structure $\cE_2$. Since
\[\cE_0^{\geq 0}\subseteq \cE_1^{\geq 0}\subseteq \cE_0^{\geq -1}\quad\text{and}\quad\cE_1^{\geq 0}\subseteq \cE_2^{\geq 0}\subseteq \cE_1^{\geq -1},\]
we have
\[\cE_0^{\geq 0}\subseteq \cE_1^{\geq 0}\subseteq \cE_2^{\geq 0}\subseteq \cE_1^{\geq -1}\subseteq \cE_0^{\geq -2}\]
and therefore $(\cE_0,\cE_2)$ satisfies
 \[\cE_0^{\geq 0}\subseteq \cE_2^{\geq 0}\subseteq \cE_0^{\geq -2}.\]
Iterating $m$-times this procedure with respect to torsion pairs $(\cW_i,\cZ_i)$ in $\cH_{\cE_i}$, $i=0,1,...,m-1$, we get a $t$-structure $\cE_m$ satisfying\[\cE_0^{\geq 0}\subseteq \cE_m^{\geq 0}\subseteq \cE_0^{\geq -m},\]
and hence of type $(n,k)\in\mathbb N\times\mathbb N$ with $n+k\leq m$. We say that the pair of $t$-structures $(\cE_0,\cE_m)$ has been obtained by an \emph{iterated HRS procedure of length $m$}.
\end{numero}

\begin{remark}
Let $\cD$ be a $t$-structure in a triangulated category $\cC$. Assume $(\cX,\cY)$ is  a torsion pair in the heart $\cH_\cD$. Let $\cT:=\cT_{(\cX,\cY)}$ be the $t$-structure obtained by tilting $\cD$ with respect to $(\cX,\cY)$. Now let us consider in $\cH_\cT=\{C\in\cC: H^0_\cD(C)\in\cX,\ H^{-1}_\cD(C)\in\cY,\  H^{i}_\cD(C)=0\ \forall i\not=-1,0\}$ the torsion pair $(\cY[1],\cX[0])$ (see \cite[Ch.~I, Corollary~2.2]{MR1327209}); tilting $\cT$ with respect to $(\cY[1],\cX[0])$ we get the $t$-structure $\cE_{(\cY[1],\cX[0])}$ defined by
\[\cE^{\leq 0}_{(\cY[1],\cX[0])}:=\{C\in\cC: H^0_\cT(C)\in\cY[1],\; H^{i}_\cT(C)=0\ \forall i>0\}.\]
Let us prove that $\cE^{\leq 0}_{(\cY[1],\cX[0])}=\cD^{\leq -1}$ and hence $\cE_{(\cY[1],\cX[0])}=\cD[1]$.
First let us see $\cE^{\leq 0}_{(\cY[1],\cX[0])}\subseteq\cD^{\leq -1}$.
By Proposition~\ref{polisch} we have
\[\cE^{\leq 0}_{(\cY[1],\cX[0])}\subseteq \cT^{\leq 0}\subseteq \cD^{\leq 0};\]
therefore we have to prove that if $E\in \cE^{\leq 0}_{(\cY[1],\cX[0])}$ then $H^0_\cD(E)=0$. 
Since $H^0_\cT(E)\in\cY[1]$, we have $H^0_\cD(H^0_\cT(E))=0$; then from the distinguished  triangle
\[\tau^{\leq -1}E\to \tau^{\leq 0}E=E\to H^0_\cT(E)\stackrel{+1}{\to}\]
and $\cT^{\leq -1}\subseteq\cD^{\leq -1}$ we get the exact sequence
\[0=H^0_\cD(\tau^{\leq-1} E)\to H^0_\cD(\tau^{\leq 0}E)=H^0_\cD(\E)\to H^0_\cD(H^0_\cT(E))=0\]
and hence $0=H^0_\cD(E)$.\\
Conversely, let us see $\cE^{\leq 0}_{(\cY[1],\cX[0])}\supseteq\cD^{\leq -1}$: if $C$ belongs to $\cD^{\leq -1}\subseteq\cT^{\leq 0}$ we have $H^i_\cT(C)=0$ for each $i>0$ and $H^0_\cT(C)=\tau^{\geq 0}C$. We have to prove that $H^0_\cT(C)$ belongs to $\cY[1]$, i.e., $H^{-1}_\cD(H^0_\cT(C))\in\cY$ and $H^0_\cD(H^0_\cT(C))=0$. Since $H^0_\cT(C)$ belongs to $\cT^{\geq 0}$, we immediately get $H^{-1}_\cD(H^0_\cT(C))\in\cY$. Finally from the distinguished triangle $\tau^{\leq -1}C\to C\to \tau^{\geq 0}C\stackrel{+1}\to$ we get the exact sequence
$0=H^0_\cD(C)\to H^0_\cD(\tau^{\geq 0}C)\to H^1_\cD(\tau^{\leq -1}C)=0$ and hence $H^0_\cD(H^0_\cT(C))=H^0_\cD(\tau^{\geq 0}C)=0$.\\
Observe that if both $\cX$ and $\cY$ are different from 0, then the pair $(\cD,\cT)$ and $(\cT, \cD[1])$ are of type $(1,0)$, but $(\cD, \cD[1])$ is of type $(0,1)$: therefore, iterating the tilting procedure, the gap, the shift, or their sum are not additive functions.
\end{remark}

Now we want to prove that, fixed a $t$-structure $\cD$ in $\cC$, an iterated HRS procedure permits to recover all the $t$-structures $\cT$ which are \emph{$\cD$-filterable} and satisfy condition $\cD^{\geq 0}\subseteq \cT^{\geq 0}\subseteq \cD^{\geq -m}$.

\begin{definition}\label{def:filterable}
Let $\cD$ be a fixed $t$-structure in $\cC$. We say that a $t$-structure $\cT$ in $\cC$ is 
\begin{itemize}
\item \emph{right $\cD$-filterable} if for any $i\in\mathbb Z$ the intersection $\cD^{\geq i}\cap \cT^{\geq 0}$ is a co-aisle;\item \emph{left $\cD$-filterable} if for any $i\in\mathbb Z$ the intersection $\cD^{\leq i}\cap \cT^{\leq 0}$ is an aisle.\end{itemize}
Both the right filterability and left filterability are symmetric notions: 
therefore we will say that the pair $(\cD,\cT)$ is \emph{right (left) filterable} if either
$\cT$ is right (left) $\cD$-filterable or equivalently $\cD$ is right (left) $\cT$-filterable. We call \emph{filterable} a pair of $t$-structures which is right or left filterable.
\end{definition}
\begin{remark}\label{rem:polisfilterable}
A pair $(\cD,\cT)$ of $t$-structures satisfying condition
\[
\cD^{\leq -1}\subseteq \cT^{\leq 0}\subseteq \cD^{\leq 0}\quad\text{or equivalently}\quad
\cD^{\geq 0}\subseteq \cT^{\geq 0}\subseteq \cD^{\geq -1}
\]
is both right and left filterable. Indeed, we have 
\[\cD^{\leq i}\cap\cT^{\leq 0}=\begin{cases}
    \cT^{\leq 0}  & \text{if }i\geq 0, \\
     \cD^{\leq i} & \text{otherwise}
\end{cases}\quad\text{and}\quad
\cD^{\geq i}\cap\cT^{\geq 0}=\begin{cases}
    \cD^{\geq i}  & \text{if }i\geq 0, \\
     \cT^{\geq 0} & \text{otherwise.}
\end{cases}
\]
\end{remark}

Observe that in general the intersection of the aisles or the co-aisles of two $t$-structures is not an aisle or a co-aisle (see e.g. \cite[Lemma~3]{Bondal}): the inclusion of the intersection of the two aisles (co-aisles) in the triangulated category $\cC$ could not admit a right (left) adjoint.

Nevertheless there is a wide class of interesting examples in which this pathology does not occur. 
The following is a (not exhaustive) list of sufficient conditions for a pair of $t$-structures to be filterable.

\begin{lemma}\label{lemma:esempi}
Let $(\cD,\cT)$ be a pair of $t$-structures in a triangulated category $\cC$.
Whenever one of the following conditions holds, the pair $(\cD,\cT)$ is right filterable:
\begin{enumerate}
\item[(1{\phantom{'}})] $\cC$ has countable direct sums and both the co-aisles $\cD^{\geq 0}$ and $\cT^{\geq 0}$ are closed under taking homotopy colimits in $\cC$; the aisle corresponding to $\cD^{\geq i}\cap\cT^{\geq 0}$ is the smallest subcategory of $\cC$ containing both $\cD^{\leq i}$ and $\cT^{\leq 0}$,
closed under suspension, extensions and direct summands.
\item[(2{\phantom{'}})] For each $i\in\mathbb Z$ one has $\tau^{\geq 0}(\cD^{\geq i})\subseteq \cD^{\geq i}$; the aisle corresponding to $\cD^{\geq i}\cap\cT^{\geq 0}$ is the subcategory of $\cC$ of extensions of $\cT^{\leq 0}$ by $\cD^{\leq i}$.
\item[(2')] For each $i\in\mathbb Z$ one has $\delta^{\geq 0}(\cT^{\geq i})\subseteq \cT^{\geq i}$; the aisle corresponding to $\cT^{\geq i}\cap \cD^{\geq 0}$ is the subcategory of $\cC$ of extensions of $\cD^{\leq 0}$ by $\cT^{\leq i}$.
\end{enumerate}
Dually, whenever one of the following conditions holds, the pair $(\cD,\cT)$ is left filterable:
\begin{enumerate}
\item[(i{\phantom{'}})] $\cC$ has countable direct products and both the aisles $\cD^{\leq 0}$ and $\cT^{\leq 0}$ are closed under taking homotopy limits in $\cC$; the co-aisle corresponding to $\cD^{\leq i}\cap\cT^{\leq 0}$  is the smallest subcategory of $\cC$ containing both $\cD^{\geq i}$ and $\cT^{\geq 0}$,
closed under suspension, extensions and direct summands.
\item[(ii{\phantom{'}})] For each $i\in\mathbb Z$ one has $\tau^{\leq 0}(\cD^{\leq i})\subseteq \cD^{\leq i}$; the co-aisle corresponding to $\cD^{\leq i}\cap\cT^{\leq 0}$ is the subcategory of $\cC$ of extensions of $\cD^{\geq i}$ by $\cT^{\geq 0}$.
\item[(ii')] For each $i\in\mathbb Z$ one has $\delta^{\leq 0}(\cT^{\leq i})\subseteq \cT^{\leq i}$;
the co-aisle corresponding to $\cT^{\leq i}\cap \cD^{\leq 0}$ is the subcategory of $\cC$ of extensions of $\cT^{\geq i}$ by $\cD^{\geq 0}$.
\end{enumerate}
\end{lemma}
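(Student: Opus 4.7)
My plan is to verify, for each listed hypothesis, the existence condition of Definition~\ref{def:filterable}: for every $i\in\mathbb Z$ and every $X\in\cC$, I will produce an approximating triangle $A\to X\to B\to A[1]$ with $A$ in the proposed aisle $\cU_i$ and $B\in\cD^{\geq i+1}\cap\cT^{\geq 1}=(\cD^{\geq i}\cap\cT^{\geq 0})[-1]$, and verify the orthogonality $\Hom_\cC(\cU_i,\cD^{\geq i+1}\cap\cT^{\geq 1})=0$. The orthogonality is essentially automatic: for $D\in\cD^{\leq i}$, $T\in\cT^{\leq 0}$, and $Y\in\cD^{\geq i+1}\cap\cT^{\geq 1}$, both $\Hom(D,Y)$ and $\Hom(T,Y)$ vanish by the orthogonality in each $t$-structure, and this vanishing propagates through shifts, extensions, summands and (when available) homotopy colimits. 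Closure of $\cU_i$ under $[1]$ is equally routine in every case. I will treat the three right-filterable cases (1), (2), (2$'$) and reduce (i), (ii), (ii$'$) to them via the opposite category (Remark~\ref{opposite}).

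For condition (2), I first combine the hypothesis $\tau^{\geq 0}(\cD^{\geq j})\subseteq\cD^{\geq j}$ with the shift identity $\tau^{\geq 1}(-)=\tau^{\geq 0}(-[1])[-1]$ to obtain $\tau^{\geq 1}(\cD^{\geq i+1})\subseteq\cD^{\geq i+1}$. Then, for $X\in\cC$, I form the two successive approximating triangles
\[\delta^{\leq i}X\to X\to\delta^{\geq i+1}X\to,\qquad \tau^{\leq 0}\delta^{\geq i+1}X\to\delta^{\geq i+1}X\to\tau^{\geq 1}\delta^{\geq i+1}X\to,\]
and apply the octahedral axiom to the composite $X\to\delta^{\geq i+1}X\to\tau^{\geq 1}\delta^{\geq i+1}X$. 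The cofiber $\tau^{\geq 1}\delta^{\geq i+1}X$ lies in $\cD^{\geq i+1}\cap\cT^{\geq 1}$ by the preliminary observation, while the fiber $A$ fits into a triangle $\delta^{\leq i}X\to A\to\tau^{\leq 0}\delta^{\geq i+1}X\to$, exhibiting it as an extension of an object of $\cT^{\leq 0}$ by one of $\cD^{\leq i}$. Condition (2$'$) is the same argument with the roles of $\cD$ and $\cT$ swapped, invoking the $\cD\leftrightarrow\cT$ symmetry of right-filterability noted just after Definition~\ref{def:filterable}.

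For condition (1), I use an alternating Milnor/homotopy-colimit construction: set $X_0=X$, and inductively $X_{2n+1}=\delta^{\geq i+1}X_{2n}$, $X_{2n+2}=\tau^{\geq 1}X_{2n+1}$, with the canonical structure maps coming from each approximating triangle. By a cofinality argument, the odd-indexed subsequence is cofinal in a sequence of objects of $\cD^{\geq i+1}$ and the even-indexed one in $\cT^{\geq 1}$; the hypothesis that both co-aisles are closed under $\HoColim$ then forces $B:=\HoColim X_n\in\cD^{\geq i+1}\cap\cT^{\geq 1}$. The fiber of $X\to B$ is assembled from the $\mathrm{fib}(X_n\to X_{n+1})$, each of which lies in $\cD^{\leq i}$ or in $\cT^{\leq 0}$ by construction; the remaining task is to see that it can be described using only suspension, extensions and direct summands from $\cD^{\leq i}\cup\cT^{\leq 0}$.

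The main obstacle, as expected, is condition (1). Two points need care: that the alternating truncation process genuinely converges to a $B$ inside the co-aisle intersection (ensured by closure under $\HoColim$), and, more delicately, that the fiber $A$ can be realised inside the \emph{finitary} subcategory generated by $\cD^{\leq i}\cup\cT^{\leq 0}$ under suspension, extension, and direct summands rather than in a strictly larger subcategory requiring countable direct sums. This absorption into the direct-summand closure is the real content of (1) and will rely on a splitting argument for the Milnor triangle associated with the telescope $X_0\to X_1\to\cdots$ under the given closure hypotheses. Cases (2) and (2$'$) are comparatively elementary, and the left-filterable conditions (i), (ii), (ii$'$) follow from (1), (2), (2$'$) by passage to the opposite triangulated category $\cC^\circ$ equipped with the $t$-structures of Remark~\ref{opposite}.
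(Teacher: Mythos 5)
Your treatment of (2), (2$'$) and of the dual statements coincides with the paper's: the paper forms the composite truncation $\sigma^{\geq 0}=\tau^{\geq 0}\circ\delta^{\geq i}$ (which your two successive approximating triangles plus the octahedron reproduce exactly, as in the paper's $3\times 3$ diagram), observes that the hypothesis $\tau^{\geq 0}(\cD^{\geq i})\subseteq\cD^{\geq i}$ places $\tau^{\geq 1}\delta^{\geq i+1}X$ in $\cD^{\geq i+1}\cap\cT^{\geq 1}$, and exhibits the fiber as an extension of $\tau^{\leq 0}\delta^{\geq i+1}X\in\cT^{\leq 0}$ by $\delta^{\leq i}X\in\cD^{\leq i}$; (2$'$) is obtained by exchanging $\cD$ and $\cT$ and the left-filterable cases by passing to $\cC^\circ$. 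These parts are correct and essentially identical in route.

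The issue is condition (1). The paper does not prove it but cites \cite[Theorem~2.3]{broomhead2013averaging}, noting only that the argument there uses nothing beyond closure of the two co-aisles under homotopy colimits. Your alternating telescope $X_{2n+1}=\delta^{\geq i+1}X_{2n}$, $X_{2n+2}=\tau^{\geq 1}X_{2n+1}$ is indeed the right construction and, together with the cofinality and orthogonality checks you indicate, it does produce the approximating triangle $A\to X\to B$ with $B\in\cD^{\geq i+1}\cap\cT^{\geq 1}$ and $\Hom(A,-)$ vanishing on that class; this already yields that $\cD^{\geq i}\cap\cT^{\geq 0}$ is a co-aisle, hence right filterability. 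But the remaining assertion of (1) --- that the aisle equals the \emph{finitary} closure of $\cD^{\leq i}\cup\cT^{\leq 0}$ under suspension, extensions and direct summands --- is left unproved in your proposal, and the ``splitting argument'' you gesture at does not close it: splitting the approximating triangle of an aisle object $X$ (the map $X\to B$ is zero) only exhibits $X$ as a direct summand of $A=\HoColim A_n$, and that homotopy colimit is a cone on a map between \emph{countable coproducts} of iterated extensions of unbounded length, so it lives a priori only in the coproduct-closed hull, not in the subcategory generated by suspensions, extensions and summands alone. Descending from the cocomplete closure to the finitary one is precisely the nontrivial content of the Broomhead--Pauksztello--Ploog theorem the paper invokes, and your proposal as written does not supply it.
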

\begin{proof}
(1) The claim is proved in \cite[Theorem~2.3]{broomhead2013averaging} under the stronger hypothesis that both $\cD$ and $\cT$ are compactly generated. Following the proof one realizes that it is sufficient to assume $\cD^{\geq 0}$ and $\cT^{\geq 0}$ are closed under taking homotopy colimits in $\cC$.\\
(2) The truncation functor $\sigma^{\geq 0}:=\tau^{\geq 0}\circ \delta^{\geq i}$ is left adjoint to the inclusion
$\cD^{\geq i}\cap \cT^{\geq 0}\hookrightarrow \cC$; since $\cD^{\geq i}\cap \cT^{\geq 0}$ is closed under cosuspension and extensions, it is a co-aisle (see \cite[\S 1]{keller1988aisles}). Next for any $M\in \cC$ we have the following diagram 
\[
\xymatrix{\delta^{\leq i}M\ar[r]\ar@{=}[d]&\sigma^{\leq 0}M\ar[d]\ar[r] & \tau^{\leq 0}(\delta^{\geq i+1}(M))\ar[d]\\
\delta^{\leq i} M\ar[r]\ar[d]&M\ar[r]\ar[d]&\delta^{\geq i+1} M\ar[d]\\
0\ar[r]&\sigma^{\geq 1}M\ar@{=}[r]&\tau^{\geq 1}(\delta^{\geq i+1}(M))}
\]
Therefore $\sigma^{\leq 0} M$ is an extension of the objects $\delta^{\leq i} M\in\cD^{\leq i}$ and $\tau^{\leq 0}(\delta^{\geq i+1}(M))\in\cT^{\leq 0}$.
(2') follows by (2) inverting the role of $\cD$ and $\cT$.\\
The dual part follows considering the previous statements in $\cC^\circ$.
\end{proof}

In the sequel we will analyze in detail the right filterable pairs of $t$-structures. We will collect the dual results for the left filterable pairs of $t$-structures in Section~\ref{Sec:left}.

\begin{definition}\label{def:D_i}
Let $(\cD,\cT)$ be a right filterable pair of $t$-structures. We denote by $\cD_i$ the $t$-structure whose co-aisle is $\cD^{\geq -i}\cap \cT^{\geq 0}$, by $\cH_i$ its heart, and by $(\cX_i,\cY_i)$ the torsion pair in $\cH_i$ defined by
$\cX_i:=\cD_{i+1}^{\leq 0}\cap\cH_i$, $\cY_i:= \cD_{i+1}^{\geq 1}\cap\cH_i$.
The $t$-structures $\cD_i$, the hearts $\cH_i$, the torsion classes $\cX_i$ and the torsion-free classes $\cY_i$ are called the \emph{right basic $t$-structures}, the \emph{right basic hearts}, the \emph{right basic torsion classes} and the \emph{right basic torsion-free classes} of $(\cD,\cT)$.
\end{definition}

\begin{lemma}\label{lemma:inclusioni}
Let $(\cD,\cT)$ be a right filterable pair of $t$-structures. Then, for any $i,\ell\in\mathbb Z$ we have the following inclusions
\[\cD_i^{\geq \ell}\subseteq \cD_{i+1}^{\geq \ell}, \quad
\cD_i^{\geq \ell}\subseteq \cD_{i-1}^{\geq \ell-1}\quad\text{and}\quad
\cD_i^{\leq \ell}\supseteq \cD_{i+1}^{\leq \ell}, \quad
\cD_i^{\leq \ell}\supseteq \cD_{i-1}^{\leq \ell-1}.
\]
In particular:
\begin{enumerate}
\item $(\cD_j,\cD_\ell)$ is a right filterable pair of $t$-structures 
for any $j\leq\ell\in\mathbb Z$;
\item $\cD_{i+1}$ is obtained by tilting $\cD_i$ with respect to the torsion pair $(\cX_i,\cY_i)$;
\item $\cX_i=\cD_{i+1}^{\leq 0}\cap \cD_i^{\geq 0}$ and $\cY_i=\cD_{i}^{\leq 0}\cap \cD_{i+1}^{\geq 1}$;
\item for each $m\geq 0$ we have $\cH_{i+m}\subseteq \cD_i^{[-m,0]}$, while $\cH_i\subseteq \cD_{i+m}^{[0,m]}$;
\item if $\cT^{\leq 0}\subseteq \cD^{\leq 0}$ and $i\geq 0$, then $\cY_i=\cT^{\geq 1}\cap\cH_i$.
\end{enumerate}
\end{lemma}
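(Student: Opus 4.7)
The plan is to reduce everything to a single book-keeping identity. From the definition $\cD_i^{\geq 0}=\cD^{\geq -i}\cap\cT^{\geq 0}$, shifting by $[-\ell]$ gives
\[
\cD_i^{\geq \ell}\;=\;\cD^{\geq \ell-i}\cap\cT^{\geq \ell}\qquad \forall\,i,\ell\in\bZ,
\]
and, by Proposition~\ref{t-adj}, the aisles are recovered as left-orthogonals. The two main co-aisle inclusions drop out immediately: $\cD^{\geq \ell-i}\subseteq \cD^{\geq \ell-i-1}$ gives $\cD_i^{\geq \ell}\subseteq\cD_{i+1}^{\geq \ell}$, while $\cT^{\geq \ell}\subseteq\cT^{\geq \ell-1}$ gives $\cD_i^{\geq \ell}\subseteq\cD_{i-1}^{\geq \ell-1}$; the corresponding aisle inclusions follow by taking left orthogonals.

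I would then handle the numbered items in turn. For (1), the right filterability of $(\cD,\cT)$ implies, after shifting, that every $\cD^{\geq p}\cap\cT^{\geq q}$ is a co-aisle; combining with the identity above one sees that $\cD_j^{\geq i}\cap\cD_\ell^{\geq 0}$ reduces to such an intersection, proving right filterability of $(\cD_j,\cD_\ell)$. For (2), specialising the main inclusions at $\ell=0$ and $\ell=-1$ (and using (1) with $i+1$ in place of $\ell$) yields $\cD_i^{\leq -1}\subseteq\cD_{i+1}^{\leq 0}\subseteq\cD_i^{\leq 0}$, so Polishchuk's Proposition~\ref{polisch} identifies $\cD_{i+1}$ as the HRS-tilt of $\cD_i$ with respect to the torsion pair $(\cD_{i+1}^{\leq 0}\cap\cH_i,\cD_{i+1}^{\geq 1}\cap\cH_i)=(\cX_i,\cY_i)$.

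For (3), in the expression $\cX_i=\cD_{i+1}^{\leq 0}\cap\cD_i^{\leq 0}\cap\cD_i^{\geq 0}$, the inclusion $\cD_{i+1}^{\leq 0}\subseteq\cD_i^{\leq 0}$ makes the middle factor redundant; similarly, the main inclusion at $\ell=1$ gives $\cD_{i+1}^{\geq 1}\subseteq\cD_i^{\geq 0}$, making the factor $\cD_i^{\geq 0}$ redundant in the expression for $\cY_i$. Item (4) is obtained by iterating the two main inclusions $m$ times, producing the chains $\cD_{i+m}^{\leq 0}\subseteq\cD_i^{\leq 0}$, $\cD_{i+m}^{\geq 0}\subseteq\cD_i^{\geq -m}$, $\cD_i^{\leq 0}\subseteq\cD_{i+m}^{\leq m}$ and $\cD_i^{\geq 0}\subseteq\cD_{i+m}^{\geq 0}$, whose intersections are exactly the claimed containments. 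For (5), using (3) we have $\cY_i=\cD_i^{\leq 0}\cap\cD_{i+1}^{\geq 1}=\cD_i^{\leq 0}\cap\cD^{\geq -i}\cap\cT^{\geq 1}$; since $\cH_i\subseteq\cD_i^{\geq 0}\subseteq\cD^{\geq -i}$, the factor $\cD^{\geq -i}$ is automatic on $\cH_i$, leaving $\cY_i=\cT^{\geq 1}\cap\cH_i$.

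The argument is essentially formal, and I do not anticipate a genuine obstacle; the only point requiring a moment of care is the remark, used in~(1), that filterability of $(\cD,\cT)$ upgrades from ``$\cD^{\geq i}\cap\cT^{\geq 0}$ is a co-aisle for all $i$'' to ``$\cD^{\geq p}\cap\cT^{\geq q}$ is a co-aisle for all $p,q$'', which is obtained just by shifting the defining property by $[q]$.
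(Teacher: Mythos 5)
Your proof is correct and follows essentially the same route as the paper's: both rest on the identity $\cD_i^{\geq \ell}=\cD^{\geq \ell-i}\cap\cT^{\geq \ell}$, deduce the four inclusions from monotonicity of the two co-aisles, invoke Polishchuk's lemma for item (2), and handle (3)--(5) by eliminating redundant factors in the intersections. The only cosmetic difference is in item (1), where the paper works out the three-case identification of $\cD_j^{\geq i}\cap\cD_\ell^{\geq 0}$ explicitly while you observe more directly that it is of the form $\cD^{\geq p}\cap\cT^{\geq q}$, a shifted instance of the filterability hypothesis.
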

\begin{proof}
We have first
\[\cD_i^{\geq \ell}=\cD^{\geq -i+\ell}\cap \cT^{\geq \ell}\subseteq \cD^{\geq -i-1+\ell}\cap \cT^{\geq \ell}=\cD_{i+1}^{\geq \ell};\]
then, the second inclusion follows by
\[\cD_i^{\geq \ell}=\cD^{\geq -i+\ell}\cap \cT^{\geq \ell}\subseteq \cD^{\geq -i+\ell}\cap \cT^{\geq \ell-1}=
\cD^{\geq -i+1+\ell-1}\cap \cT^{\geq \ell-1}=\cD_{i-1}^{\geq \ell-1}.
\]
The other two inclusions are an easy consequence. 
Next, point 1 follows since for $j\leq\ell$ one has
\[\cD_j^{\geq i}\cap\cD_\ell^{\geq 0}=\left(\cD^{\geq i-j}\cap \cT^{\geq i}\right)\cap\left(\cD^{\geq -\ell}\cap \cT^{\geq 0}\right)=\begin{cases}
     \cD_j^{\geq i} & \text{if }i\geq 0, \\
     \cD_{j-i}^{\geq 0} & \text{if }j-\ell\leq i<0, \\
     \cD_\ell^{\geq 0} & \text{if }i<j-\ell.
\end{cases}
\]
Since $\cD_i^{\geq 0}\subseteq \cD_{i+1}^{\geq 0}\subseteq \cD_i^{\geq -1}$, point 2 is a consequence of Proposition~\ref{polisch}.
Next the equalities $\cX_i=\cD_{i+1}^{\leq 0}\cap \cD_i^{\geq 0}$ and $\cY_i=\cD_{i}^{\leq 0}\cap \cD_{i+1}^{\geq 1}$ in point 3 follow easily by
the inclusions proved in the first part. Let us prove point 4: by definition of right basic $t$-structure, one gets that for any $m\geq 0$
\[\cD_i^{\geq 0}\subseteq\cD_{i+m}^{\geq 0}\subseteq \cD_i^{\geq -m};\]
therefore $\cH_{i+m}=\cD_{i+m}^{\leq 0}\cap \cD_{i+m}^{\geq 0}\subseteq \cD_i^{\leq 0}\cap \cD_i^{\geq -m}=\cD^{[-m,0]}$. The other inclusion follows analogously.
Finally, point 5 can be deduced by
\[\cY_i=\cD_{i+1}^{\geq 1}\cap \cH_i=\cD^{\geq -i}\cap\cT^{\geq 1}\cap \cH_i=\cT^{\geq 1}\cap \cH_i.\]
\end{proof}

\begin{theorem}\label{teo:genPol1}
A right filterable pair $(\cD,\cT)$ of $t$-structures of type $(n,0)$
is obtained by an iterated HRS procedure of length $n$. 
\end{theorem}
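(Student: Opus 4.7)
The plan is essentially to exhibit the iterated HRS procedure explicitly using the chain of right basic $t$-structures $\cD_0,\cD_1,\dots,\cD_n$ from Definition~\ref{def:D_i}, and verify that the two endpoints of this chain coincide with $\cD$ and $\cT$ respectively. The main work has been done in Lemma~\ref{lemma:inclusioni}, so the proof should be short.

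First, I would identify the endpoints. By assumption $(\cD,\cT)$ is of type $(n,0)$, which by Definition~\ref{FL} (or Remark~\ref{rem:tipo}) means
\[
\cD^{\geq 0}\subseteq \cT^{\geq 0}\subseteq \cD^{\geq -n},
\]
with $0$ maximal and $n$ minimal with these properties. Intersecting with $\cT^{\geq 0}$ in the first inclusion gives $\cD^{\geq 0}\cap\cT^{\geq 0}=\cD^{\geq 0}$, so the co-aisle of $\cD_0$ equals that of $\cD$ and hence $\cD_0=\cD$. Similarly intersecting $\cT^{\geq 0}\subseteq\cD^{\geq -n}$ with $\cD^{\geq -n}$ gives $\cD^{\geq -n}\cap\cT^{\geq 0}=\cT^{\geq 0}$, so $\cD_n=\cT$. (Right filterability of $(\cD,\cT)$ is precisely what guarantees that every $\cD^{\geq -i}\cap\cT^{\geq 0}$ is actually the co-aisle of a $t$-structure, so each $\cD_i$ for $0\leq i\leq n$ is well defined.)

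Next I would invoke Lemma~\ref{lemma:inclusioni}(2), which says that for each $i$ the $t$-structure $\cD_{i+1}$ is obtained by tilting $\cD_i$ with respect to the torsion pair $(\cX_i,\cY_i)=(\cD_{i+1}^{\leq 0}\cap\cH_i,\;\cD_{i+1}^{\geq 1}\cap\cH_i)$ on its heart $\cH_i$. Applied successively for $i=0,1,\dots,n-1$, this produces exactly a chain
\[
\cD=\cD_0\;\leadsto\;\cD_1\;\leadsto\;\cdots\;\leadsto\;\cD_n=\cT
\]
of HRS tiltings; by definition (see \S\ref{num:HRSproc}) this is an iterated HRS procedure of length $n$ starting from $\cD$ and ending in $\cT$, which is the desired conclusion.

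There is essentially no obstacle beyond unwinding the definitions: the fact that the intermediate intersections are co-aisles is built into the right filterability hypothesis, and the HRS tilting between consecutive $\cD_i$'s is the content of Lemma~\ref{lemma:inclusioni}. The only point that requires a mild check is that the endpoint identifications $\cD_0=\cD$ and $\cD_n=\cT$ really use both inclusions in $\cD^{\geq 0}\subseteq\cT^{\geq 0}\subseteq\cD^{\geq -n}$, i.e.\ that both the shift being $0$ and the gap being $n$ are needed; if one wished to generalise to type $(n,k)$ with $k\neq 0$ one would instead have to shift the starting $t$-structure, which is precisely why the statement is restricted to type $(n,0)$.
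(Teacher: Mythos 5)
Your proposal is correct and follows essentially the same route as the paper: both identify the chain of right basic $t$-structures $\cD=\cD_0,\cD_1,\dots,\cD_n=\cT$ and use the inclusions $\cD_i^{\geq 0}\subseteq\cD_{i+1}^{\geq 0}\subseteq\cD_i^{\geq -1}$ together with Polishchuk's result (packaged in Lemma~\ref{lemma:inclusioni}(2)) to realise each step as an HRS tilt. The endpoint identifications you spell out are exactly the ones the paper uses implicitly, so there is nothing to add.
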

\begin{proof}
Denote by $\cD_i$, $i\in\mathbb Z$, the right basic $t$-structures of the pair $(\cD,\cT)$: by Definition~\ref{def:D_i}, $\cD_i$ is the $t$-structure whose co-aisle is $\cD^{\geq -i}\cap\cT^{\geq 0}$.
It is $\cD_0=\cD$ and we have:
\[\xymatrix@-1,7pc{
\cD^{\geq 0}=:&\cD^{\geq 0}_0&\subseteq\cD_1^{\geq 0}\subseteq ...&\subseteq 
\cT^{\geq 0}.\\
&{}\ar[u]
}
\]
The co-aisle of the $t$-structure $\cD_{1}$ is
\[\cD_{1}^{\geq 0}:=
\cD^{\geq -1}\cap \cT^{\geq 0}
.\] By Lemma~\ref{lemma:inclusioni} we have 
$\cD_0^{\geq 0}\subseteq \cD_{1}^{\geq 0}\subseteq \cD_0^{\geq -1}$ and hence by Proposition~\ref{polisch} the $t$-structure $\cD_{1}$ is obtained by tilting $\cD_0$ with respect to the torsion pair
\[
(\cX_0,\cY_0):=(\cD_{1}^{\leq 0}\cap \cH_0, \cD_{1}^{\geq 1}\cap \cH_0)\]
on the heart $\cH_0$ of $\cD_0$:
\[\xymatrix@-1,7pc{
\cD^{\geq 0}=:&\cD^{\geq 0}_0\subseteq&\cD_1^{\geq 0}&\subseteq ...\subseteq 
\cT^{\geq 0}.\\
&&{}\ar[u]
}
\]
The co-aisle of the $t$-structure $\cD_{2}$ is
\[\cD_{2}^{\geq 0}:=
\cD^{\geq -2}\cap \cT^{\geq 0};\]
again by Lemma~\ref{lemma:inclusioni} one has $\cD_{1}^{\geq 0}\subseteq \cD_{2}^{\geq 0}\subseteq \cD_{1}^{\geq -1}$ and hence
the $t$-structure $\cD_{2}$ is obtained by tilting $\cD_{1}$ with respect to the torsion pair
\[
(\cX_{1},\cY_{1}):=(\cD_{2}^{\leq 0}\cap \cH_{1}, \cD_{2}^{\geq 1}\cap \cH_{1})\]
on the heart $\cH_{1}$ of $\cD_{1}$. At any step we get $\cD_{i}^{\geq 0}=\cD^{\geq -i}\cap \cT^{\geq 0}$; for $i=n$ we obtain $\cD_{n}^{\geq 0}=\cD^{\geq -n}\cap \cT^{\geq 0}=\cT^{\geq 0}$:
\[\xymatrix@-1,7pc{
\cD^{\geq 0}=:&\cD^{\geq 0}_0\subseteq\cD_1^{\geq 0}\subseteq ...\subseteq& 
\cD_{n}^{\geq 0}&=\cT^{\geq 0}.\\
&&{}\ar[u]
}
\]
\end{proof}

\begin{corollary}\label{cor:genPol1}
Let $(\cD,\cT)$ be a right filterable pair  of $t$-structures. The pair $(\cD,\cT)$ verifies
\[\cD^{\leq -m}\subseteq \cT^{\leq 0}\subseteq \cD^{\leq 0}\quad\text{or equivalently}\quad
\cD^{\geq 0}\subseteq \cT^{\geq 0}\subseteq \cD^{\geq -m}\]
if and only if $\cT$ is a $t$-structure obtained by $\cD$ with an iterated HRS procedure of length $m$. 
\end{corollary}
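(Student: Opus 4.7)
The plan is to mimic the construction in the proof of Theorem~\ref{teo:genPol1}, but letting the length of the iteration be the integer $m$ of the hypothesis rather than the gap $n$ of the pair. Nothing essential in that proof genuinely required type $(n,0)$: only right filterability is used to form the intermediate $t$-structures, plus an endpoint condition that forces the iteration to terminate at $\cT$.

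First the easy direction: if $\cT$ is obtained from $\cD$ by an iterated HRS procedure of length $m$, then the nested chain of co-aisles displayed in~\ref{num:HRSproc} gives immediately $\cD^{\geq 0}\subseteq \cT^{\geq 0}\subseteq \cD^{\geq -m}$, which by rotation is equivalent to $\cD^{\leq -m}\subseteq \cT^{\leq 0}\subseteq \cD^{\leq 0}$.

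For the converse, assume $\cD^{\geq 0}\subseteq \cT^{\geq 0}\subseteq \cD^{\geq -m}$. I would form the right basic $t$-structures $\cD_0=\cD,\cD_1,\dots,\cD_m$ of $(\cD,\cT)$ as in Definition~\ref{def:D_i}; these are all well-defined by right filterability. Lemma~\ref{lemma:inclusioni} yields $\cD_i^{\geq 0}\subseteq \cD_{i+1}^{\geq 0}\subseteq \cD_i^{\geq -1}$ for every $0\leq i\leq m-1$, so Polishchuk's Proposition~\ref{polisch} applies at each step: $\cD_{i+1}$ is obtained from $\cD_i$ by tilting with respect to the torsion pair $(\cX_i,\cY_i)$ in $\cH_{\cD_i}$. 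The hypothesis $\cT^{\geq 0}\subseteq \cD^{\geq -m}$ then forces $\cD_m^{\geq 0}=\cD^{\geq -m}\cap \cT^{\geq 0}=\cT^{\geq 0}$, i.e., $\cD_m=\cT$. Therefore $\cT$ is obtained from $\cD=\cD_0$ by $m$ consecutive HRS tilts, i.e., by an iterated HRS procedure of length $m$.

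The only point needing comment is that, by Remark~\ref{rem:tipo}, the hypothesis merely forces $(\cD,\cT)$ to be of type $(n,k)$ with $n+k\leq m$, rather than of type $(n,0)$ as in Theorem~\ref{teo:genPol1}. This is harmless: some of the torsion pairs $(\cX_i,\cY_i)$ will be trivial and serve purely as padding. Concretely, for $0\leq i\leq k$ a short check gives $\cD_i=\cD[i]$ and the step $\cD_i\to \cD_{i+1}$ is the tilt by $(0,\cH_{\cD[i]})$; for $i\geq n+k$ one already has $\cD_i=\cT$, and the step is the trivial tilt by $(\cH_\cT,0)$. So the iteration still lands on $\cT$ after exactly $m$ steps, and no genuine obstacle arises beyond recognising these trivial endpieces.
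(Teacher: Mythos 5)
Your proof is correct and follows essentially the same route as the paper's: both pass through the right basic $t$-structures $\cD_i$ and use Lemma~\ref{lemma:inclusioni} together with Proposition~\ref{polisch} to realise each consecutive step as an HRS tilt, with the hypotheses $\cD^{\geq 0}\subseteq\cT^{\geq 0}\subseteq\cD^{\geq -m}$ pinning down the endpoints $\cD_0=\cD$ and $\cD_m=\cT$. The only difference is organisational: the paper splits the chain into $k$ trivial tilts, an appeal to Theorem~\ref{teo:genPol1} for the middle segment, and $m-n-k$ final trivial tilts, whereas you run the same argument uniformly over all $m$ steps, which is legitimate since Lemma~\ref{lemma:inclusioni} needs only right filterability (your parenthetical that the step $\cD_k\to\cD_{k+1}$ is a trivial tilt is slightly off when $n\geq 1$, but nothing in the proof depends on it).
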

\begin{proof} 
Let us suppose $\cD^{\leq -m}\subseteq \cT^{\leq 0}\subseteq \cD^{\leq 0}$. As observed in Remark~\ref{rem:tipo}, $(\cD,\cT)$ is of type $(n,k)\in\mathbb N\times \mathbb N$ with $n+k\leq m$. Denote by $\cD_i$, $i\in\mathbb Z$, the right basic $t$-structures of the pair $(\cD,\cT)$. 
It is $\cD_0=\cD=\cD[0]$,..., $\cD_k=\cD[k]$. As we have observed in Remark~\ref{rem:tipo000110}, tilting $\cD_0$ with respect to the torsion pair $(0,\cH_0:=\cH_\cD)$ we get the $t$-structure $\cD[1]$; next tilting $\cD[1]$ with respect to the torsion pair $(0,\cH_1=\cH_\cD[1])$ we get the  $t$-structure $\cD[2]$. Therefore, iterating this procedure $k$ times we get  $\cD_k=\cD[k]$:
\[\xymatrix@-1,7pc{
\cD^{\geq 0}\subseteq...\subseteq &\cD_k^{\geq 0}&=\cD^{\geq -k}\subseteq
\cT^{\geq 0}\subseteq\cD^{\geq -n-k}\subseteq \cD^{\geq -m}.\\
&{}\ar[u]
}
\]
Now $(\cD_k,\cT)$ is a pair of $t$-structures of type $(n,0)$. By Theorem~\ref{teo:genPol1} with an iterated HRS procedure of length $n$ 
we get $\cD_{k+n}=\cT$.
We have already recovered the $t$-structure $\cT$ via an iterated HRS procedure of length $n+k\leq m$. If one wants to obtain the iterated HRS procedure of length exactly $m$, another step is necessary.
Indeed, tilting $m-n-k$ times the $t$-structure $\cD_{n+k}=\cT$ with respect to the trivial torsion pair $(\cH_\cT,0)$ in the heart $\cH_\cT$ of $\cT$ we get the $t$-structures $\cD_{n+k}=...=\cD_m=\cT$:
\[\xymatrix@-1,7pc{{\phantom{AAAA}}
\cD^{\geq 0}\subseteq...\subseteq \cD^{\geq -k}\subseteq ...\subseteq
\cT^{\geq 0}=\cD_{n+k}^{\geq 0}=...=&\cD_m^{\geq 0}&\subseteq\cD^{\geq -n-k}\subseteq\cD^{\geq -m}.\\
&{}\ar[u]
}
\]
The other direction follows by the construction of the iterated HRS procedure (see \ref{num:HRSproc} and the proof of Theorem~\ref{teo:genPol1}).
\end{proof}

\begin{remark}\label{rem:steps}
Consider a right filterable pair $(\cD,\cT)$ of $t$-structures of type $(n,k)$ satisfying
$\cD^{\geq 0}\subseteq \cT^{\geq 0}\subseteq \cD^{\geq -m}$
as in Corollary~\ref{cor:genPol1}; then
 the basic right $t$-structures $\cD_i$ (see Definition~\ref{def:D_i}) of $(\cD,\cT)$ satisfy the following properties:
\begin{enumerate}
\item For $1\leq i\leq k$, the pairs $(\cD_{i-1},\cD_i)$ are of type $(0,1)$; the pair
$(\cD_0:=\cD,\cD_i)$ is of type $(0,i)$, while the pair $(\cD_i,\cD_m:=\cT)$ is of type $(n,k-i)$.
\item For $k+1\leq i\leq n+k$, the pairs $(\cD_{i-1},\cD_i)$ are of type $(1,0)$; the pair
$(\cD_0:=\cD,\cD_i)$ is of type $(i-k,k)$, while the pair $(\cD_i,\cD_m:=\cT)$ is of type $(n+k-i,0)$. In particular $\cD_{n+k}=\cT$.
\item For $n+k+1\leq i\leq m$, the pairs $(\cD_{i-1},\cD_i)$ are of type $(0,0)$, i.e., $\cD_{n+k}=...=\cD_{m}=\cT$.
\end{enumerate}
\end{remark}

\section{$t$-tree}\label{Sec:ttree}

Let $(\cD, \cT)$ be a right filterable pair of $t$-structures of type $(n,0)$. We have 
\[\cD^{\geq 0}\subseteq\cT^{\geq 0}\subseteq \cD^{\geq -n},\]
and therefore by Theorem~\ref{teo:genPol1} the pair $(\cD, \cT)$ is obtained by an iterated HRS procedure of length $n$. 
Denoted by $\cD_0=\cD$, $\cD_1$,..., $\cD_n=\cT$ the right basic $t$-structures of $(\cD, \cT)$, we recall that for each $i=0,...,n-1$,
any pair $(\cD_{i}, \cD_{i+1})$ is of type $(1,0)$, i.e., $\cD_{i+1}$ is obtained by tilting $\cD_{i}$ with respect to the non trivial torsion pair $(\cX_i,\cY_i):=(\cD_{i+1}^{\leq 0}\cap\cH_i, \cD_{i+1}^{\geq 1}\cap \cH_i)$.
Observe that since $\cX_i=\cD_{i+1}^{\leq 0}\cap\cH_i\subseteq\cH_{i+1}$ and $\cY_i=\cD_{i+1}^{\geq 1}\cap \cH_i\subseteq \cH_{i+1}[-1]$, the torsion class $\cX_i$ is contained in both the hearts $\cH_i$ and $\cH_{i+1}$
while the torsion free class $\cY_i$ is contained in both $\cH_i$ and $\cH_{i+1}[-1]$. 

Starting with an object in $\cH_0$, it first decomposes with respect to the torsion pair $(\cX_0,\cY_0)$, producing a short exact sequence in $\cH_0$. The first term of this short exact sequence, i.e., the torsion part, belongs also to $\cH_{1}$; therefore it decomposes with respect to the torsion pair $(\cX_{1},\cY_{1})$, producing a new short exact sequence in $\cH_{1}$. Analogously, the third term, i.e., the torsion-free part, belongs also to $\cH_{1}[-1]$; therefore it decomposes with respect to the torsion pair $(\cX_{1}[-1],\cY_{1}[-1])$, producing a new short exact sequence in $\cH_{1}[-1]$. Iterating this procedure $n$-times we will get a tree of short exact sequences in the right basic hearts $\cH_0=\cH_\cD$, ..., $\cH_n=\cH_\cT$, that we call the \emph{right $t$-tree} associated to the right filterable pair $(\cD,\cT)$ of $t$-structures.

\begin{theorem}\label{tree}
Let $(\cD, \cT)$ be a right filterable pair of $t$-structures of type $(n,0)$ in $\cC$.
For any object $X$ in $\cH_\cD$ one can functorially construct its \emph{right $t$-tree}
{\tiny{\[\xymatrix@-1.75pc{
&&&&&&&&X\ar@{->>}[drrrr]
\\
&&&&X_0\ar@{^(->}[urrrr]\ar@{->>}[drr]&&&&&&&&X_1\ar@{->>}[drr]
\\
&&X_{00}
\ar@{^(->}[urr]&&&&
X_{01}
&&&&X_{10}
\ar@{^(->}[urr]&&&&X_{11}
\\
&\hdots&&\hdots&&\hdots&&\hdots&&\hdots&&\hdots&&\hdots&&\hdots\ar@{->>}[dr]
\\
X_{\underbrace{00...0}_n}\ar@{^(->}[ur]&&\hdots&&\hdots&&\hdots&&\hdots&&\hdots&&\hdots&&\hdots&&X_{\underbrace{11...1}_n}
}\]}}\\
whose branches have $n+1$ vertices and where for each $\ell=0,...,n-1$ the sequence
\[0\to X_{i_1...i_{\ell} 0}\to X_{i_1...i_\ell}\to X_{i_1...i_{\ell} 1}\to 0\]
is a short exact sequence in the shifted right basic heart $\cH_{\ell}[-(i_1+\cdots+i_\ell)]$ with  $X_{i_1...i_{\ell} 0}$ belonging to the torsion class $\cX_\ell[-(i_1+\cdots+i_{\ell})]$ and $X_{i_1...i_{\ell} 1}$ belonging to the torsion-free class $\cY_\ell[-(i_1+\cdots+i_{\ell})]$. 
\end{theorem}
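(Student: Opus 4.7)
The plan is to build the $t$-tree by induction on the level $\ell$, using at each level the torsion-pair decomposition afforded by the HRS step from $\cD_\ell$ to $\cD_{\ell+1}$. The base case is immediate: since $X\in\cH_0=\cH_\cD$ and $(\cX_0,\cY_0)$ is a torsion pair in $\cH_0$, there is a canonical short exact sequence
\[0\to X_0\to X\to X_1\to 0\]
in $\cH_0$ with $X_0\in\cX_0$ and $X_1\in\cY_0$.

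The inductive step will require the key geometric input that the torsion class lives in the \emph{next} heart (unshifted) while the torsion-free class lives in the next heart shifted by one. Concretely, I first verify using Lemma~\ref{lemma:inclusioni} that
\[\cX_\ell\subseteq\cH_{\ell+1}\qquad\text{and}\qquad\cY_\ell\subseteq\cH_{\ell+1}[-1].\]
The first inclusion follows from $\cH_\ell\subseteq\cD_\ell^{\geq 0}\subseteq\cD_{\ell+1}^{\geq 0}$ (first inclusion of Lemma~\ref{lemma:inclusioni}), so $\cX_\ell=\cD_{\ell+1}^{\leq 0}\cap\cH_\ell\subseteq\cD_{\ell+1}^{\leq 0}\cap\cD_{\ell+1}^{\geq 0}=\cH_{\ell+1}$. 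The second follows symmetrically from $\cH_\ell\subseteq\cD_\ell^{\leq 0}\subseteq\cD_{\ell+1}^{\leq 1}$ (which is the inclusion $\cD_{\ell+1}^{\leq 1}\supseteq\cD_\ell^{\leq 0}$ of Lemma~\ref{lemma:inclusioni}), giving $\cY_\ell=\cD_{\ell+1}^{\geq 1}\cap\cH_\ell\subseteq\cH_{\ell+1}[-1]$.

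With these two containments in hand, I proceed by induction on $\ell$. Assume that for some $\ell<n$ and every binary string $i_1\cdots i_\ell$ I have produced an object $X_{i_1\cdots i_\ell}$ lying in the shifted heart $\cH_\ell[-(i_1+\cdots+i_\ell)]$. By the containments above applied at stage $\ell$ (and shifted by $-(i_1+\cdots+i_\ell)$), the pair $(\cX_\ell[-(i_1+\cdots+i_\ell)],\cY_\ell[-(i_1+\cdots+i_\ell)])$ is a torsion pair in $\cH_\ell[-(i_1+\cdots+i_\ell)]$, so it produces a canonical short exact sequence
\[0\to X_{i_1\cdots i_\ell 0}\to X_{i_1\cdots i_\ell}\to X_{i_1\cdots i_\ell 1}\to 0\]
with $X_{i_1\cdots i_\ell 0}\in\cX_\ell[-(i_1+\cdots+i_\ell)]$ and $X_{i_1\cdots i_\ell 1}\in\cY_\ell[-(i_1+\cdots+i_\ell)]$. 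Using the two inclusions proved above, the first term then belongs to $\cH_{\ell+1}[-(i_1+\cdots+i_\ell)]=\cH_{\ell+1}[-(i_1+\cdots+i_\ell+0)]$ and the second to $\cH_{\ell+1}[-1-(i_1+\cdots+i_\ell)]=\cH_{\ell+1}[-(i_1+\cdots+i_\ell+1)]$, which is precisely the condition needed to continue the induction. After $n$ steps one reaches $\cH_n=\cH_\cT$, producing the $2^n$ leaves.

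Functoriality in $X$ is automatic at each step because the short exact sequence attached to an object of an abelian category by a torsion pair is functorial; the composition of $n$ such functorial decompositions yields a functorial assignment of the whole tree. The main (minor) obstacle is just the careful bookkeeping of the shift index $i_1+\cdots+i_\ell$ and the verification that the suspension commutes with the torsion-pair decomposition in the expected way, but this reduces to noting that $[-1]$ is an equivalence of triangulated categories and carries torsion pairs to torsion pairs in the shifted hearts.
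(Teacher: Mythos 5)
Your proof is correct and follows essentially the same route as the paper's: an induction on the level $\ell$, decomposing each vertex via the torsion pair $(\cX_\ell,\cY_\ell)$ in the appropriately shifted heart, with the key inclusions $\cX_\ell\subseteq\cH_{\ell+1}$ and $\cY_\ell\subseteq\cH_{\ell+1}[-1]$ (which the paper records just before the theorem) guaranteeing that the induction can continue. Your derivation of those two inclusions from Lemma~\ref{lemma:inclusioni} is also exactly the one used in the text.
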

\begin{proof}
Denote by $\cD_i$, $\cH_i$, $i=0,1,...,n-1,n$ the right basic $t$-structures and the right basic hearts of $(\cD,\cT)$. Let us consider the torsion pairs $(\cX_i,\cY_i):=(\cD_{i+1}^{\leq 0}\cap\cH_i, \cD_{i+1}^{\geq 1}\cap \cH_i)$ in $\cH_i$, $i=0,1,...,n-1$.
Take an object $X$ in $\cH_0=\cH_\cD$; we denote by $X_0$ and $X_1$ its torsion and torsion-free parts with respect to the torsion pair 
$(\cX_0,\cY_0)$ in $\cH_0$:
\[0\to X_0\to X\to X_1\to 0\quad\text{in }\cH_0.\]
The object $X_0$ belongs also to $\cH_1$, while $X_1$ belongs also to $\cH_1[-1]$. Therefore $X_0$ has a torsion part $X_{00}$ and a torsion free part $X_{01}$ with respect to the torsion pair 
$(\cX_1,\cY_1)$ in $\cH_1$; analogously $X_1$ has a torsion part $X_{10}$ and a torsion free part $X_{11}$ with respect to the torsion pair 
$(\cX_1[-1],\cY_1[-1])$ in $\cH_1[-1]$.
Let $1\leq \ell\leq n-1$; suppose we have obtained the object $X_{i_1...i_\ell}$ in $\cH_{\ell}[-(i_1+\cdots +i_\ell)]$, where $i_1,..., i_\ell\in\{0,1\}$.
We denote by $X_{i_1...i_\ell 0}$ and $X_{i_1...i_\ell 1}$ its torsion and torsion free parts with respect to the torsion pair $(\cX_\ell,\cY_\ell)[-(i_1+\cdots +i_\ell)]$ in $\cH_\ell[-(i_1+\cdots +i_\ell)]$:
\[0\to X_{i_1...i_{\ell} 0}\to X_{i_1...i_\ell}\to X_{i_1...i_{\ell} 1}\to 0\quad\text{in }\cH_{\ell}[-(i_1+\cdots+i_\ell)].\]
The object $X_{i_1...i_\ell 0}$ belongs also to $\cH_{\ell+1}[-(i_1+\cdots +i_{\ell})] $, while $X_{i_1...i_\ell 1}$ belongs also to $\cH_{\ell+1}[-(i_1+\cdots +i_{\ell}+1)] $. This permits to iterate the procedure  untill $\ell=n-1$ obtaining the wished $t$-tree. 
\end{proof}
\begin{definition}
Let $(\cD, \cT)$ be a right filterable pair of $t$-structures of type $(n,0)$ and $X$ an object of $\cH_\cD$. We call the \emph{degree of the vertex} $X_{i_1...i_\ell}$ in the right $t$-tree of  $X$ the sum $i_1+\cdots+i_\ell$. The vertices 
$X_{i_1...i_n}$ are called \emph{right $t$-leaves of the $t$-tree}, and the right $t$-leaf $X_{\underbrace{\scriptstyle{1...1}}_d\underbrace{\scriptstyle{0...0}}_{n-d}}$ is called the \emph{right leading $t$-leaf of degree $d$}.
\end{definition}
\begin{remark}\label{rem:ttree}
The $2^n$ \emph{right $t$-leaves} $X_{i_1...i_{n}}$ in $\cH_{n}[-(i_1+\cdots+i_n)]=\cH_\cT[-(i_1+\cdots+i_n)]$ produced in the last step of the construction of  a right $t$-tree are $\cT$-static objects in $\cC$ of degree $i_1+\cdots+i_n$ (see Definition~\ref{def:static}). Therefore we have got a decomposition of the objects in the heart $\cH_\cD$ in $\cT$-static pieces.
\end{remark}
Let us study the right $t$-tree associated to an object and the information we can obtain from it.

\begin{definition}\label{def:generatedtree}
Let $(\cD, \cT)$ be a right filterable pair of $t$-structures of type $(n,0)$ in $\cC$. Given the right $t$-tree of an object $X\in\cH_\cD$, we define the \emph{subtree generated by the term $X_{i_1...i_\ell}$} to be the subtree of the right $t$-tree of $X$ which has $X_{i_1...i_\ell}$ as root.  
\end{definition}

\begin{remark}\label{rem:subtree}
Let $X\in\cH_\cD$ and $X_{i_1...i_\ell}$ be a vertex of its right $t$-tree. The object $X_{i_1...i_\ell}[i_1+\cdots+i_\ell]$ belongs to  $\cH_\ell$; since $(\cD_\ell,\cT)$ is a right filterable pair of $t$-structures of type $(n-\ell,0)$, we can construct the right $t$-tree of $X_{i_1...i_\ell}[i_1+\cdots+i_\ell]$: this right $t$-tree coincides with the $(i_1+\cdots+i_\ell)$-shift of the subtree of the right $t$-tree of $X$ generated by $X_{i_1...i_\ell}$. The leaves of this subtree are the leaves of the right $t$-tree of $X$ whose index starts with $i_1...i_\ell$.
\end{remark}

In the following proposition we give some cohomological properties of the vertices in the right $t$-tree of an object $X\in\cH_\cD$.

\begin{proposition}\label{cohX}
Let $X\in\cH_\cD$. For each $0\leq \ell\leq n$, the vertex $X_{i_1...i_\ell}$ in the right $t$-tree of $X$ satisfies the following properties:
\begin{enumerate}
\item $X_{i_1\dots i_\ell}$ belongs to $\cT^{[i_1+\cdots+i_\ell,n-\ell+i_1+\cdots+i_\ell]}\subseteq\cT^{[0,n]}$;
\item $H^{i_1+\cdots+i_\ell}_\cT(X_{i_1\dots i_\ell})=X_{i_1...i_\ell\underbrace{\scriptstyle{0...0}}_{n-\ell}}[i_1+\cdots+i_\ell]$;
\item$H^{n-\ell+i_1+\cdots+i_\ell}_\cT(X_{i_1\dots i_\ell})=X_{i_1...i_\ell\underbrace{\scriptstyle{1...1}}_{n-\ell}}[n-\ell+i_1+\cdots+i_\ell]$.
\end{enumerate}
\end{proposition}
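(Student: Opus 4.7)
The plan is downward induction on $\ell$, from $\ell = n$ (the leaves) to $\ell = 0$. Write $s := i_1 + \cdots + i_\ell$ for brevity. The base case $\ell = n$ is immediate from the construction in Theorem~\ref{tree}: the leaf $X_{i_1\ldots i_n}$ lies in $\cH_n[-s] = \cH_\cT[-s]$, so $X_{i_1\ldots i_n}[s]$ belongs to $\cH_\cT$, and all three assertions collapse to the trivial statement that a $\cT$-static object of degree $s$ has its only non-zero $\cT$-cohomology concentrated in degree $s$ (so the interval $[s, n-\ell+s]$ degenerates to $\{s\}$ and both tails of the shifted-zero and shifted-one index strings reduce to $X_{i_1\ldots i_n}$ itself).

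For the inductive step I would fix $0 \leq \ell < n$ and invoke the defining short exact sequence in the shifted basic heart $\cH_\ell[-s]$ produced by the $t$-tree; viewed inside $\cC$ this becomes a distinguished triangle
\[
X_{i_1\ldots i_\ell 0} \to X_{i_1\ldots i_\ell} \to X_{i_1\ldots i_\ell 1} \stackrel{+1}{\to}.
\]
Applying the inductive hypothesis to the two children --- noting that the index sum of $X_{i_1\ldots i_\ell 0}$ is still $s$ while that of $X_{i_1\ldots i_\ell 1}$ is $s+1$ --- one reads off $X_{i_1\ldots i_\ell 0} \in \cT^{[s,\, n-\ell-1+s]}$ and $X_{i_1\ldots i_\ell 1} \in \cT^{[s+1,\, n-\ell+s]}$. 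Since both the aisle and the co-aisle of $\cT$ are closed under extensions, the triangle forces $X_{i_1\ldots i_\ell} \in \cT^{[s,\, n-\ell+s]} \subseteq \cT^{[0,n]}$, which is (1).

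Statements (2) and (3) then drop out of the long exact sequence of $\cT$-cohomology attached to the triangle, evaluated at the two extreme degrees. At $j = s$ the co-aisle bound $X_{i_1\ldots i_\ell 1} \in \cT^{\geq s+1}$ kills both $H^{s-1}_\cT(X_{i_1\ldots i_\ell 1})$ and $H^{s}_\cT(X_{i_1\ldots i_\ell 1})$, so $H^s_\cT(X_{i_1\ldots i_\ell}) \cong H^s_\cT(X_{i_1\ldots i_\ell 0})$, which by induction equals $X_{i_1\ldots i_\ell 0\underbrace{\scriptstyle 0\ldots 0}_{n-\ell-1}}[s]$, i.e.\ the leading zero-leaf of degree $s$ for the parent. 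Dually, at $j = n-\ell+s$ the aisle bound $X_{i_1\ldots i_\ell 0} \in \cT^{\leq n-\ell-1+s}$ kills $H^{n-\ell+s}_\cT(X_{i_1\ldots i_\ell 0})$ and $H^{n-\ell+s+1}_\cT(X_{i_1\ldots i_\ell 0})$, whence $H^{n-\ell+s}_\cT(X_{i_1\ldots i_\ell}) \cong H^{n-\ell+s}_\cT(X_{i_1\ldots i_\ell 1}) = X_{i_1\ldots i_\ell 1\underbrace{\scriptstyle 1\ldots 1}_{n-\ell-1}}[n-\ell+s]$ by induction.

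No genuine obstacle arises: the entire argument is the long exact sequence combined with the bookkeeping that descending along a ``$0$''-edge of the $t$-tree preserves the index sum while a ``$1$''-edge raises it by one. This is precisely the shift needed so that the $\cT$-cohomological intervals $[s,\,n-\ell-1+s]$ and $[s+1,\,n-\ell+s]$ of the two children sit inside $[s,\,n-\ell+s]$ and meet the two extreme degrees of the parent in a disjoint way, which prevents any cancellation and lets the inductive formulas propagate verbatim up the tree.
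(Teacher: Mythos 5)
Your proof is correct and follows essentially the same route as the paper: the core of the argument for (2) and (3) is the identical long exact sequence of $\cT$-cohomology applied to the distinguished triangles coming from the short exact sequences of the $t$-tree, with the vanishing at the two extreme degrees supplied by part (1), propagated from the leaves up (the paper phrases this as ``iterating'' down the tree rather than as a formal downward induction, but it is the same computation). The only cosmetic difference is in (1), where the paper simply cites that $X_{i_1\dots i_\ell}\in\cH_\ell[-(i_1+\cdots+i_\ell)]$ together with $\cH_\ell\subseteq\cT^{[0,n-\ell]}$ (Lemma~\ref{heart-incl} applied to the pair $(\cD_\ell,\cT)$ of type $(n-\ell,0)$), whereas you rederive this bound by induction and extension-closure of the aisle and co-aisle; both are valid.
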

\begin{proof}
(1) In Theorem~\ref{tree} we have proved that $X_{i_1...i_\ell}$ belongs to $\cH_{\ell}[-(i_1+\cdots+i_\ell)]$. Since the pair of $t$-structures $(\cD_\ell,\cT)$ is of type $(n-\ell, 0)$ it follows by Lemma~\ref{heart-incl} that
$\cH_{\ell}\subseteq \cT^{[0,n-\ell]}$; therefore the assertion is proved.\\
(2) For any $\ell=0,\dots,n-1$ the short exact sequence
\[
0\to X_{i_1...i_\ell0}\to X_{i_1...i_\ell}\to X_{i_1...i_\ell1}\to 0
\]
in the heart $\cH_\ell[-(i_1+\cdots+i_\ell)]$ provides 
a distinguished triangle in $\cC$. Considering the long exact sequence of $\cT$-cohomology associated to this distinguished triangle, by point 1 we get first
\[0\to H^{i_1+\cdots+i_\ell}_\cT(X_{i_1\dots i_\ell0})\to H^{i_1+\cdots+i_\ell}_\cT(X_{i_1\dots i_\ell})\to
H^{i_1+\cdots+i_\ell}_\cT(X_{i_1\dots i_\ell1})=0.\]
Iterating we have
\[H^{i_1+\cdots+i_\ell}_\cT(X_{i_1\dots i_\ell})\cong H^{i_1+\cdots+i_\ell}_\cT(X_{i_1\dots i_\ell0})\cong ...\]
\[...\cong
H^{i_1+\cdots+i_\ell}_\cT(X_{i_1...i_\ell\underbrace{\scriptstyle{0...0}}_{n-\ell}})=
X_{i_1...i_\ell\underbrace{\scriptstyle{0...0}}_{n-\ell}}[i_1+\cdots+i_\ell].\]
(3) The same long exact sequence of $\cT$-cohomology considered in point 2 gives
\[0=H^{n-\ell+i_1+\cdots+i_\ell}_\cT(X_{i_1\dots i_\ell0})\to H^{n-\ell+i_1+\cdots+i_\ell}_\cT(X_{i_1\dots i_\ell})\to
H^{n-\ell+i_1+\cdots+i_\ell}_\cT(X_{i_1\dots i_\ell1})\to 0.\]
Iterating we have
\[H^{n-\ell+i_1+\cdots+i_\ell}_\cT(X_{i_1\dots i_\ell})\cong H^{n-\ell+i_1+\cdots+i_\ell}_\cT(X_{i_1\dots i_\ell1})\cong ...\]
\[...\cong
H^{n-\ell+i_1+\cdots+i_\ell}_\cT(X_{i_1...i_\ell\underbrace{\scriptstyle{1...1}}_{n-\ell}})=
X_{i_1...i_\ell\underbrace{\scriptstyle{1...1}}_{n-\ell}}[n-\ell+i_1+\cdots+i_\ell].\]
\end{proof}
\begin{remark}
Points 2 and 3 of Proposition~\ref{cohX} give the invariance of the $\cT$-cohomology of degree $i_1+\cdots+i_\ell$ on the \emph{left branch} passing through the vertex $X_{i_1...i_\ell}$ and of the $\cT$-cohomology of degree $n-\ell+i_1+\cdots+i_\ell$ on the \emph{right branch} passing through the vertex $X_{i_1...i_\ell}$:
{\tiny{\[\xymatrix@-1.75pc{
&&&&&&&X_{i_1...i_\ell}\ar@{->>}[drrrr]
\\
&&&X_{i_1...i_\ell0}\ar@{^(->}[urrrr]
&&&&&&&&X_{i_1...i_\ell1}\ar@{->>}[drr]
\\
&\hdots
\ar@{^(->}[urr]
&&&&
&&&&
&&&&\hdots\ar@{->>}[dr]
\\
X_{i_1...i_\ell\underbrace{\scriptstyle{00...0}}_{n-\ell}}\ar@{^(->}[ur]&&&&&&&&&&&&&&X_{i_1...i_\ell\underbrace{\scriptstyle{11...1}}_{n-\ell}}
}\]}}

\end{remark}

Let us analyze the objects in $\cH_\cD$ with a particularly simple right $t$-tree. The following lemma
will be useful in the sequel.

\begin{lemma}\label{heart-int}
Let $(\cD, \cT)$ be a right filterable pair of $t$-structures of type $(n,0)$ in $\cC$.
Then $\cH_\cD\cap\cH_{\cT}[-d]$ is equal to:
\[
\begin{cases}
      0& \text{if }d<0\text{ or }d>n; \\
      \displaystyle\bigcap_{i=0}^{n-1}\cX_i=\bigcap_{i=0}^{n}\cH_i& \text{if }d=0; \\
      \displaystyle\left(\bigcap_{i=0}^{d-1}\cY_i[-i]\right)\!\!\cap\!\!\left(\bigcap_{j=d}^{n-1}\cX_j[-d]\right)\!\!\!=\!\!\!
\left(\bigcap_{i=0}^{d}\cH_i[-i]\right)\!\!\cap\!\!\left(\bigcap_{j=d}^{n}\cH_j[-d]\right)& \text{if }0<d< n; \\
      \displaystyle\bigcap_{i=0}^{n-1}\cY_i[-i]=\bigcap_{i=0}^{n}\cH_i[-i]& \text{if }d=n.
\end{cases}
\]
\end{lemma}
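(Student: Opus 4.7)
The plan is to prove, for $0\leq d\leq n$, the intermediate characterisation
\[\cH_{\cD}\cap\cH_{\cT}[-d]=\bigcap_{i=0}^{n}\cH_i[-d_i]\quad\text{where}\quad d_i:=\min(i,d),\]
and then to repackage the right-hand side in terms of the $\cX_i$ and $\cY_i$. For the extreme cases $d<0$ or $d>n$, I would invoke Lemma~\ref{heart-incl} to get $\cH_{\cD}\subseteq \cT^{[0,n]}$, so that $\cH_{\cD}\cap\cH_{\cT}[-d]\subseteq \cT^{[0,n]}\cap\cT^{[d,d]}=0$ by the orthogonality axiom of the $t$-structure $\cT$.

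Before the main step I would record the auxiliary identities $\cX_i=\cH_i\cap\cH_{i+1}$ and $\cY_i=\cH_i\cap\cH_{i+1}[-1]$ for $0\leq i\leq n-1$. These follow from the definitions $\cX_i=\cD_{i+1}^{\leq 0}\cap\cH_i$ and $\cY_i=\cD_{i+1}^{\geq 1}\cap\cH_i$ combined with the chains $\cD_i^{\leq -1}\subseteq\cD_{i+1}^{\leq 0}\subseteq\cD_i^{\leq 0}$ and $\cD_i^{\geq 0}\subseteq\cD_{i+1}^{\geq 0}\subseteq\cD_i^{\geq -1}$ coming from Lemma~\ref{lemma:inclusioni}, which force $\cH_i\subseteq\cD_{i+1}^{\geq 0}\cap\cD_{i+1}^{\leq 1}$.

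The core step is the intermediate characterisation. The inclusion $\supseteq$ is immediate by evaluating at $i=0$ and $i=n$. For $\subseteq$, given $X\in\cH_{\cD}\cap\cH_{\cT}[-d]$, I would show $X\in\cD_i^{[d_i,d_i]}=\cH_i[-d_i]$ using the explicit formula $\cD_i^{\geq \ell}=\cD^{\geq \ell-i}\cap\cT^{\geq \ell}$ from Definition~\ref{def:D_i}. Membership in the co-aisle $\cD_i^{\geq d_i}$ is immediate from $X\in\cD^{\geq 0}$ and $X\in\cT^{\geq d}$, using $d-i\leq 0$ on the side $i\geq d$. For the aisle, I would use $\cD_i^{\leq d_i}={}^\perp\cD_i^{\geq d_i+1}$: any $Y\in\cD^{\geq d_i+1-i}\cap\cT^{\geq d_i+1}$ lies in $\cD^{\geq 1}$ when $i\leq d$ (and is then killed by $X\in\cD^{\leq 0}$) or in $\cT^{\geq d+1}$ when $i\geq d$ (killed by $X\in\cT^{\leq d}$); at the pivot index $i=d$ both kernels are simultaneously available, which is the delicate point of the argument.

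The final translation is bookkeeping: since $d_{i+1}-d_i=1$ for $i<d$ and $d_{i+1}=d_i$ for $i\geq d$, consecutive slices of the intersection collapse as $\cH_i[-i]\cap\cH_{i+1}[-i-1]=\cY_i[-i]$ in the first regime and $\cH_i[-d]\cap\cH_{i+1}[-d]=\cX_i[-d]$ in the second, via the auxiliary identities above. Reassembling these pieces reproduces the four formulas of the statement and simultaneously establishes the equalities between the $\cX$-$\cY$ and the $\cH_i$ descriptions. The only genuine obstacle I anticipate is the orthogonality verification at $i=d$; once that is in place, everything else reduces to direct consequences of Lemma~\ref{lemma:inclusioni} and the $t$-structure axioms.
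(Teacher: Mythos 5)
Your proof is correct and follows essentially the same route as the paper: both arguments show that an object of $\cH_\cD\cap\cH_\cT[-d]$ lies in each shifted basic heart $\cH_i[-\min(i,d)]$, then close the circle via the trivial reverse inclusion obtained by evaluating at $i=0$ and $i=n$, using the identities $\cX_i=\cH_i\cap\cH_{i+1}$ and $\cY_i=\cH_i\cap\cH_{i+1}[-1]$. The only (cosmetic) difference is that you verify the aisle memberships by a direct orthogonality computation at each index, including the pivot $i=d$, whereas the paper invokes the pre-packaged monotonicity inclusions of Lemma~\ref{lemma:inclusioni} and splits the index range at $d$.
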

\begin{proof}
Since the pair $(\cD,\cT)$ is of type $(n,0)$, by Lemma~\ref{heart-incl} the heart
$\cH_\cT$ is contained in $\cD^{[-n,0]}$; therefore
$\cH_\cD\cap\cH_{\cT}[-d]=0$  whenever $d<0$ or $d>n$.\\
By Lemma~\ref{lemma:inclusioni}, we have for $i=0,..., n-1$
\[
\cH_\cD\cap\cH_\cT=\cD_0^{\geq 0}\cap\cD_n^{\leq 0}\subseteq \cD_i^{\geq 0}\cap\cD_{i+1}^{\leq 0}=\cD_{i+1}^{\leq 0}\cap \cH_i=\cX_i\subseteq \cH_i\cap\cH_{i+1}\]
which
proves that 
\[\cH_\cD\cap\cH_\cT\subseteq 
\bigcap_{i=0}^{n-1}\cX_i\subseteq \bigcap_{i=0}^{n}\cH_i\subseteq
\cH_0\cap\cH_n=\cH_\cD\cap\cH_{\cT}
\]
and hence $\cH_\cD\cap\cH_\cT=\displaystyle\bigcap_{i=0}^{n-1}\cX_i= \bigcap_{i=0}^{n}\cH_i$.\\
If $0<d\leq n$,  by Lemma~\ref{lemma:inclusioni} we have that for each $0\leq i\leq d-1$
\[\cH_\cD\cap\cH_\cT[-d]\subseteq 
\cD^{\leq 0}\cap \left(\cD^{\geq 0}\cap\cT^{\geq d} \right)=
\cD_0^{\leq 0}\cap\cD_d^{\geq d}\subseteq \]
\[\subseteq   \cD_i^{\leq i} \cap\cD_{i+1}^{\geq i+1} = \cH_i[-i]\cap\cD_{i+1}^{\geq i+1}=\cY_i[-i],\]
while for each $d\leq i\leq n-1$ we have
\[\cH_\cD\cap\cH_\cT[-d]\subseteq
\left(\cD^{\geq 0}\cap\cT^{\geq d} \right)\cap\cT^{\leq d}=
\cD_d^{\geq d}\cap\cD_n^{\leq d}\subseteq \]
\[\subseteq \cD_i^{\geq d} \cap \cD_{i+1}^{\leq d}  =\cH_i[-d]\cap\cD_{i+1}^{\leq d} =
\cX_i[-d].\]
This proves that for $0<d\leq n$
\[\cH_\cD\cap\cH_\cT[-d]\subseteq 
\left(\bigcap_{i=0}^{d-1}\cY_i[-i]\right)\cap
\left(\bigcap_{j=d}^{n-1}\cX_j[-d]\right)\subseteq
\]
in view of $\cY_i\subseteq \cH_i\cap\cH_{i+1}[-1]$ and $\cX_i\subseteq \cH_i\cap\cH_{i+1}$
\[\subseteq\left(\bigcap_{i=0}^{d}\cH_i[-i]\right)\cap
\left(\bigcap_{j=d}^{n}\cH_j[-d]\right)\subseteq 
\cH_0\cap\cH_n[-d]=\cH_\cD\cap\cH_\cT[-d].\]
Thus we get the wanted equalities for $0<d<n$ and for $d=n$ we obtain: 
\[\cH_\cD\cap\cH_{\cT}[-n]=\bigcap_{i=0}^{n-1}\cY_i[-i]=\bigcap_{i=0}^{n}\cH_i[-i].\]
\end{proof}

Let us start by studying the case of a right $t$-tree degenerating in a single branch.

\begin{proposition}\label{cor:torsono}
Let $(\cD, \cT)$ be a right filterable pair of $t$-structures of type $(n,0)$ in $\cC$. 
An object $0\not=X\in\cH_0=\cH_\cD$ has a right $t$-tree with a unique non zero branch if and only if $X$  is $\cT$-static. In such a case, if $X$  is $\cT$-static of degree $d$,  the unique non zero leaf is the right leading leaf of degree $d$.
\end{proposition}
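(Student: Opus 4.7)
The statement is an \emph{iff} pointing to a specific leaf, so I will prove both implications and then identify the surviving branch.

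For the backward direction (unique non-zero branch $\Rightarrow$ $\cT$-static), the key observation is elementary: along a branch all of whose siblings vanish, each short exact sequence $0\to X_{\ldots 0}\to X_\ldots \to X_{\ldots 1}\to 0$ degenerates, so the parent is canonically isomorphic to its unique non-zero child. Iterating, $X$ is isomorphic in $\cC$ to the unique non-zero leaf, which by Remark~\ref{rem:ttree} lives in $\cH_\cT[-d]$ where $d$ is the sum of the indices on the branch. Hence $X$ is $\cT$-static of degree $d$.

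For the forward direction ($\cT$-static $\Rightarrow$ unique non-zero branch, with identification as the right leading leaf), I would invoke Lemma~\ref{heart-int}: if $X\in\cH_\cD\cap\cH_\cT[-d]$, then $X\in\cY_\ell[-\ell]$ for $\ell=0,\ldots,d-1$ and $X\in\cX_\ell[-d]$ for $\ell=d,\ldots,n-1$. I would then build the tree inductively. At depth $\ell<d$, $X$ sits in the torsion-free class of the relevant torsion pair, so the torsion child vanishes and the torsion-free child equals the parent; this forces the tree to continue along the $1$-side up to depth $d$. At depth $\ell\geq d$, $X$ sits in the torsion class of the relevant shifted torsion pair, so the torsion-free child vanishes and the tree continues along the $0$-side. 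The only surviving branch is therefore $\underbrace{1\ldots 1}_{d}\underbrace{0\ldots 0}_{n-d}$, and its leaf is $X$ itself. Combining with the backward direction, the unique non-zero leaf of an arbitrary object $X$ with $\cT$-static degree $d$ must be precisely this right leading leaf.

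The only technical point, and hence the main obstacle, is tracking the indexing carefully: the torsion pair used at depth $\ell+1$ along a branch $i_1\ldots i_\ell$ is $(\cX_\ell,\cY_\ell)[-(i_1+\cdots+i_\ell)]$ in the shifted heart $\cH_\ell[-(i_1+\cdots+i_\ell)]$, and one has to match the membership conditions coming from Lemma~\ref{heart-int} (which are $\cY_\ell[-\ell]$, respectively $\cX_\ell[-d]$) to this shift. The edge cases $d=0$ and $d=n$, where one of the two families is empty, I would handle first as the cleanest illustration of the inductive mechanism.
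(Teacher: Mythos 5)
Your proposal is correct and follows essentially the same route as the paper: the backward implication via the degeneration of the short exact sequences along the unique branch together with Remark~\ref{rem:ttree}, and the forward implication by reading off from Lemma~\ref{heart-int} that $X$ lies in $\cY_\ell[-\ell]$ for $\ell<d$ and in $\cX_\ell[-d]$ for $\ell\geq d$, which forces the branch $\underbrace{1\ldots1}_{d}\underbrace{0\ldots0}_{n-d}$. Your attention to the shift bookkeeping (matching $\cY_\ell[-\ell]$ and $\cX_\ell[-d]$ to the torsion pair $(\cX_\ell,\cY_\ell)[-(i_1+\cdots+i_\ell)]$ used at that vertex) is exactly the point the paper's proof relies on implicitly.
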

\begin{proof}
If the right $t$-tree of $X$ has a unique non zero branch, necessarily all the maps along this branch are the identity map; therefore $X$ coincides with one of its leaves, and by Remark~\ref{rem:ttree} it is a $\cT$-static object.\\
 Conversely, assume $X$ is $\cT$-static of degree $d$ (i.e.,  $X\in\cH_\cT[-d]$); 
then $X$ belongs to $\cH_\cD\cap\cH_\cT[-d]$.
If $d=0$ by Lemma~\ref{heart-int}  we have
$X\in\bigcap_{i=0}^{n-1}\cX_i$ which proves that 
$X=X_{\underbrace{\scriptstyle{0...0}}_{n}}$. 
If $0<d\leq n$ by Lemma~\ref{heart-int}  we have
$X\in\left(\bigcap_{i=0}^{d-1}\cY_i[-i]\right)\cap\left(\bigcap_{j=d}^{n-1}\cX_j[-d]\right)$
which proves that $X=X_1=\cdots=X_{\underbrace{\scriptstyle{1...1}}_d}$ and then in the remaining $n-d$ steps we have
$X=X_{\underbrace{\scriptstyle{1...1}}_d0}=\cdots=X_{\underbrace{\scriptstyle{1...1}}_d\underbrace{\scriptstyle{0...0}}_{n-d}}$, which is the right leading leaf of degree $d$.
\end{proof}

The previous proposition characterises the case in which the right $t$-tree admits only one non zero leading leaf in $\cH_\cD$. The next result generalises to the case of right $t$-trees whose non zero leaves are leading leaves in $\cH_\cD$.

\begin{proposition}\label{prop:static}
Let $(\cD, \cT)$ be a right filterable pair of $t$-structures of type $(n,0)$ in $\cC$. 
An object $0\not=X\in\cH_0=\cH_\cD$ has a right $t$-tree whose non zero leaves are leading leaves in $\cH_\cD$ if and only if  the $\cT$-cohomologies $H^i_\cT X$ are $\cD$-static of degree $-i$ for each $0\leq i\leq n$. In this case all the vertices of the right $t$-tree of $X$ belong to $\cH_\cD$, and moreover for each $0\leq i\leq n$
\begin{itemize}
\item $X_{\underbrace{\scriptstyle{1...1}}_{i}}=\tau^{\geq i} X$,
\item $X_{\underbrace{\scriptstyle{1...1}}_{i}0}=...=X_{\underbrace{\scriptstyle{1...1}}_{i}\underbrace{\scriptstyle{0...0}}_{n-i}}=H^i_\cT X[-i]$,
\item $X_{\underbrace{\scriptstyle{1...1}}_{i}0j_{i+2}...j_\ell}=0$ for each $j_{i+2}...j_\ell\not=\underbrace{0...0}_{\ell-i-1}$, with $i+2\leq\ell\leq n$.
\end{itemize}
\end{proposition}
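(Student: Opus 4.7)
My plan is to prove the biconditional together with the explicit description of every vertex simultaneously, by an induction on the tree level $i$. I would tackle the harder direction $(\Leftarrow)$ first and then read off the converse from the resulting tree structure.

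For $(\Leftarrow)$, assume $H^i_\cT X\in \cH_\cD[i]$ for every $0\le i\le n$. The basic technical input is a closure observation: in a distinguished triangle with two vertices in $\cH_\cD$, the long exact sequence of $\cD$-cohomology forces the third vertex to lie in $\cH_\cD$ as well. Iterating this on the triangles $\tau^{\le k-1}A\to \tau^{\le k}A\to H^k_\cT A[-k]$ for any $\cT$-bounded $A$ all of whose $\cT$-cohomologies are $\cD$-static of the correct degree, one obtains $\tau^{\le k}A\in \cH_\cD$ for every $k$. Applied to $X$ and to $\tau^{\ge i}X$ (whose non-zero $\cT$-cohomologies are a subset of those of $X$), this gives $\tau^{\le i-1}X,\ \tau^{\ge i}X\in \cH_\cD$ for every $i$.

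The heart of the argument is the induction on $i$ establishing $X_{\underbrace{1\ldots 1}_i}=\tau^{\ge i}X$ and $X_{\underbrace{1\ldots 1}_{i-1}0}=H^{i-1}_\cT X[-(i-1)]=:L_{i-1}$. The inductive step starts from the $\cT$-truncation triangle
\[
H^{i-1}_\cT X[-(i-1)]\longrightarrow \tau^{\ge i-1}X\longrightarrow \tau^{\ge i}X\stackrel{+1}{\longrightarrow},
\]
whose three terms lie in $\cH_\cD$ by the previous paragraph. I would then check that all three in fact lie in $\cH_{i-1}[-(i-1)]$, using $\cH_\cD\subseteq \cD_{i-1}^{[0,i-1]}$ from Lemma~\ref{lemma:inclusioni}(4) shifted plus the obvious $\cT^{\ge k}$-memberships; that Lemma~\ref{heart-int} places the left term in the torsion class $\cX_{i-1}[-(i-1)]$; and that $\tau^{\ge i}X\in \cH_\cD\cap \cT^{\ge i}\subseteq \cD_i^{\ge i}$ places the right term in the torsion-free class $\cY_{i-1}[-(i-1)]$. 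Uniqueness of torsion decomposition in $\cH_{i-1}[-(i-1)]$ then identifies this SES with the one used in the tree construction of $X_{\underbrace{1\ldots 1}_{i-1}}$, completing both inductive claims at level $i$.

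The description of the subtree rooted at $L_{i-1}$ is then immediate from Proposition~\ref{cor:torsono}: since $L_{i-1}[i-1]\in \cH_i$ is $\cT$-static of degree $0$, its subtree of height $n-i$ for the pair $(\cD_i,\cT)$ collapses onto the leftmost branch, giving $X_{\underbrace{1\ldots 1}_i0}=\cdots=X_{\underbrace{1\ldots 1}_i\underbrace{0\ldots 0}_{n-i}}=L_i$ and the vanishing claim for non-constantly-zero tails. The converse $(\Rightarrow)$ is the easier direction: assuming only leading leaves are non-zero and lie in $\cH_\cD$, Proposition~\ref{cor:torsono} makes each $X_{\underbrace{1\ldots 1}_{i-1}0}$ $\cT$-static of degree $i-1$, which turns the tree SES into a $\cT$-truncation triangle and inductively yields $H^{i-1}_\cT X=L_{i-1}[i-1]\in \cH_\cD[i-1]$. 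The main technical obstacle is the simultaneous verification of the class memberships that identify the tree SES with the $\cT$-truncation SES; this rests on the chain of inclusions among the basic $t$-structures $\cD_i$ supplied by Lemma~\ref{lemma:inclusioni}.
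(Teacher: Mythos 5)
Your proposal is correct and follows essentially the same route as the paper's proof: in both directions the key step is identifying the $\cT$-truncation triangle $H^{i}_\cT(X)[-i]\to\tau^{\geq i}X\to\tau^{\geq i+1}X$ with the torsion decomposition in $\cH_i[-i]$ via Lemmas~\ref{lemma:inclusioni} and~\ref{heart-int}, and your use of Proposition~\ref{cor:torsono} to collapse the subtrees rooted at the $\cT$-static vertices is just a packaged form of the paper's explicit computation. One caveat: your ``closure observation'' is stated too strongly --- a distinguished triangle with two vertices in $\cH_\cD$ need not have its third vertex in $\cH_\cD$ (the cone of a non-injective, non-surjective map in the heart has two non-zero $\cD$-cohomologies); only the \emph{middle} term of a triangle whose outer terms lie in the heart is forced into $\cH_\cD$, but since you apply the observation exclusively in that extension configuration, the argument is unaffected.
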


\begin{proof}
If $n=0$, the statement is clearly true since $\cD=\cT$. Let $n\geq 1$ and assume the right $t$-tree of $X$ has the leading leaves in $\cH_\cD$ and the non leading ones equal to zero. For $\ell=n-1, n-2, ...,0$, the short exact sequences $0\to X_{j_1...j_\ell0}\to X_{j_1...j_\ell}\to X_{j_1...j_\ell1}\to 0$ in the shifted right basic hearts $\cH_\ell[-(j_1+\cdots+j_\ell)]$
induce distinguished triangles $X_{j_1...j_\ell0}\to X_{j_1...j_\ell}\to X_{j_1...j_\ell1}\stackrel {+1}\to$ in the triangulated category $\cC$.
Starting with $\ell=n-1$, and proceeding along the right $t$-tree from the leaves towards the root, we easily get that all the vertices of the $t$-tree live in $\cH_\cD$.
We prove now that $H^i_\cT X=X_{\underbrace{\scriptstyle{1...1}}_i\underbrace{\scriptstyle{0...0}}_{n-i}}[i]$ for $0\leq i\leq n$; then we conclude as the leading leaves are in $\cH_\cD$.
First of all, forasmuch as the non leading leaves vanish, we have \[
X_{\underbrace{\scriptstyle{1...1}}_i0}=...=X_{\underbrace{\scriptstyle{1...1}}_i\underbrace{\scriptstyle{0...0}}_{n-i}}\in\cH_\cD\cap\cH_\cT[-i],\quad \text{for }0\leq i\leq n-1,\]
\[\text{and}\quad
X_{\underbrace{\scriptstyle{1...1}}_n}\in\cH_\cD\cap\cH_\cT[-n].
{\phantom{AAAA}}
\]
Since $X_{\underbrace{\scriptstyle{1...1}}_i}\in\cY_{i-1}[1-i]\subseteq \cT^{\geq 1}[1-i]=\cT^{\geq i}$,
for $1\leq i\leq n$ the distinguished triangle 
 \[X_{\underbrace{\scriptstyle{1...1}}_{i-1}0}\to X_{\underbrace{\scriptstyle{1...1}}_{i-1}}\to X_{\underbrace{\scriptstyle{1...1}}_i}\stackrel {+1}\to\]
coincides with the approximating triangle of $X_{\underbrace{\scriptstyle{1...1}}_{i-1}}$ with respect to the $t$-structure $\cT[-i+1]$: indeed  $X_{\underbrace{\scriptstyle{1...1}}_{i-1}0}$ belongs to $\cT^{\leq i-1}$ and $X_{\underbrace{\scriptstyle{1...1}}_i}$ belongs to $\cT^{\geq i}$.
In particular 
$X_1=\tau^{\geq 1}X$, $X_{11}=\tau^{\geq 2}X_1=\tau^{\geq 2}(\tau^{\geq 1}X)=\tau^{\geq 2}X$, and hence
\[X_{\underbrace{\scriptstyle{1...1}}_i}=\tau^{\geq i}X, \quad \text{for }i=0,1,...,n.\]
Next $H^0_\cT(X)=X_0=X_{\underbrace{\scriptstyle{0...0}}_n}$, $H^1_\cT(X) =H^1_\cT(\tau^{\geq 1}X)=H^1_\cT(X_1)=X_{10}[1]=X_{1\underbrace{\scriptstyle{0...0}}_{n-1}}[1]$, and hence for $i=0,1,...,n$
\[H^i_\cT(X) =H^i_\cT(\tau^{\geq i}X)=H^i_\cT(X_{\underbrace{\scriptstyle{1...1}}_i})=
X_{\underbrace{\scriptstyle{1...1}}_i0}[i]=
X_{\underbrace{\scriptstyle{1..1}}_{i}\underbrace{\scriptstyle{0...0}}_{n-i}}[i]\in \cH_\cD[i].\]

Conversely, assume the $\cT$-cohomologies $H^i_\cT X$ are $\cD$-static of degree $-i$, i.e., $H^i_\cT X\in\cH_\cD[i]$, for each $0\leq i\leq n$. 

Consider the distinguished triangle
\[H^{n-1}_\cT (X)[-n+1]=\tau^{\leq n-1}(\tau^{\geq n-1} X)\to \tau^{\geq n-1} X\to \tau^{\geq n} X=H^{n}_\cT (X)[-n]
\stackrel {+1}\to.
\]
Since the terms $H^{n-1}_\cT (X)[-n+1]$ and $H^{n}_\cT (X)[-n]$ belong to $\cH_\cD$, also the middle term $\tau^{\geq n-1} X$ belongs to $\cH_\cD$ and hence
\[0\to H^{n-1}_\cT (X)[-n+1]\to \tau^{\geq n-1} X\to \tau^{\geq n} X\to 0\]
is a short exact sequence in $\cH_\cD$.
From the distinguished triangle\[H^{n-2}_\cT (X)[-n+2]=\tau^{\leq n-2}(\tau^{\geq n-2} X)\to \tau^{\geq n-2} X\to \tau^{\geq n-1} X
\stackrel {+1}\to,
\]
considered that $H^{n-2}_\cT (X)[-n+2]$ and $\tau^{\geq n-1} X$ belong to $\cH_\cD$,
one gets that also $\tau^{\geq n-2} X\in\cH_\cD$ and hence
\[0\to H^{n-2}_\cT (X)[-n+2]\to \tau^{\geq n-2} X\to \tau^{\geq n-1} X\to 0\]
is a short exact sequence in $\cH_\cD$.
Iterating the same argument, one proves that
\begin{equation}\label{gtroni}
0\to H^{i}_\cT (X)[-i]\to \tau^{\geq i} X\to \tau^{\geq i+1} X\to 0
\end{equation}
is a short exact sequence in $\cH_\cD$ for any $0\leq i\leq n-1$. 
Moreover applying 
Lemma~\ref{heart-int}  we have:
\[H^{i}_\cT(X)[-i]\in\cH_\cD\cap\cH_\cT[-i]\subseteq
\bigcap_{j=i}^{n-1}\cX_j[-i]\subseteq  \cX_i[-i]\subseteq \cH_i[-i]\]
and by 
Lemma~\ref{lemma:inclusioni} 
\[\tau^{\geq i+1} X\in\cH_\cD\cap \cT^{\geq i+1}\subseteq
\cD^{\leq 0}\cap (\cD^{\geq 0}\cap \cT^{\geq i+1})\subseteq
\cD_i^{\leq i}\cap\cD_{i+1}^{\geq i+1}=\cY_{i}[-i]\subseteq\cH_i[-i]\]
which proves that also the middle term of
$\tau^{\geq i} X\in\cH_i[-i]$ and the sequence 
(\ref{gtroni}) is the short exact sequence  in $\cH_i[-i]$ 
associated to the torsion pair $(\cX_i[-i],\cY_i[-i])$
for any
$0\leq i\leq n-1$. Thus for $i=0$ we have the short exact sequence in $\cH_\cD$:
\[0\to H^{0}_\cT (X)\to \tau^{\geq 0} X=X\to \tau^{\geq 1} X\to 0\]
and hence $\tau^{\geq 1} X=X_1$ and $H^{0}_\cT (X)=X_0=X_{00}=...=X_{\underbrace{\scriptstyle{0...0}}_n}$. In particular the vertices $X_{0j_2...j_\ell}$ vanish for each $j_2...j_\ell\not=\underbrace{0...0}_{\ell-1}$, with $2\leq\ell\leq n$. Next for $i=1$ we have the short exact sequence in $\cH_\cD\cap\cH_1[-1]$:
\[0\to H^{1}_\cT (X)[-1]\to \tau^{\geq 1} X=X_1\to \tau^{\geq 2} X\to 0\]
and hence $\tau^{\geq 2} X=X_{11}$ and $H^{1}_\cT (X)[-1]=X_{10}=X_{100}=...=X_{1\underbrace{\scriptstyle{0...0}}_{n-1}}$. In particular the vertices $X_{10j_3...j_\ell}$ vanish for each $j_3...j_\ell\not=\underbrace{0...0}_{\ell-2}$, with $3\leq\ell\leq n$.
Repeating the same argument we get that for any $0\leq i\leq n-1$ we have the short exact sequence in $\cH_\cD\cap\cH_{i}[-i]$:
\[0\to H^{i}_\cT (X)[-i]\to \tau^{\geq i} X=X_{\underbrace{\scriptstyle{1...1}}_{i}}
\to \tau^{\geq i+1} X\to 0\]
and hence $\tau^{\geq i+1} X=X_{\underbrace{\scriptstyle{1...1}}_{i}1}$
and $H^{i}_\cT (X)[-i]=X_{\underbrace{\scriptstyle{1...1}}_{i}0}=...=X_{\underbrace{\scriptstyle{1...1}}_{i}\underbrace{\scriptstyle{0...0}}_{n-i}}$. In particular the vertices $X_{\underbrace{\scriptstyle{1...1}}_{i}0j_{i+2}...j_\ell}$ vanish for each $j_{i+2}...j_\ell\not=\underbrace{0...0}_{\ell-i-1}$, with $i+2\leq\ell\leq n$.
\end{proof}

\begin{remark}
Let  $X\in\cH_\cD\subset \cT^{[0,n]}$. The $\cT$-truncation functors applied to $X$  perform in the so called  Postnikov tower of $X$ :
\[
\xymatrix@-0.4pc@C-2pc{
H^{0}_\cT(X) \ar[rr]& & \tau^{\leq 1}(X) \ar[rr]\ar[ld] & &\cdots\ar[rr] \ar[ld]&& \tau^{\leq n}(X)=X  \ar[ld]\\
		& H^{1}_\cT(X)[-1]\ar@{-->}^{+1}[lu] & & H^{2}_\cT(X)[-2]\ar@{-->}^{+1}[lu] &&H^n_\cT(X)[-n]\ar@{-->}^{+1}[lu]  & \
}
\]
where all the triangle diagrams are distinguished triangles in the triangulated category $\cC$.
Whenever $X$ satisfies the equivalent conditions of Proposition~\ref{prop:static}
(i.e., $H^i_\cT X[-i]\in\cH_\cD$)
any object written in the previous tower belongs to $\cH_\cD$ and so the distinguished triangles can be regarded as short exact sequences in $\cH_\cD$ and this permits to interpret the Postnikov tower as a filtration of $X$:
\[
\xymatrix@-0.4pc@C-2pc{
H^{0}_\cT(X)0 \ar@{^(->}[rr]& & \tau^{\leq 1}(X) \ar@{^(->}[rr]\ar@{->>}[ld] & &\cdots\ar@{^(->}[rr] \ar@{->>}[ld]&& \tau^{\leq n}(X)=X  \ar@{->>}[ld]\\
		& H^{1}_\cT(X)[-1]
		& & H^{2}_\cT(X)[-2]
		&&H^n_\cT(X)[-n]
		& \
}
\]
whose graded pieces are $\cD$-static.
\end{remark}

\begin{remark}
If $(\cD, \cT)$ is a right filterable pair of $t$-structures of type $(n,k)$, then we can repeat the construction of the right $t$-tree for each $X$ in $\cH_\cD$; the result will be a unique branch with $k+1$ vertices followed by a tree whose branches have $n+1$ vertices:

{\tiny{\[\xymatrix@-1.75pc{
&&&&&&&&&&X\ar@{=}[dl]
\\
&&&&&&&&&X_0\ar@{=}[dl]
\\
&&&&&&&&\hdots\ar@{=}[dl]
\\
&&&&&&&X_{\underbrace{\scriptstyle{0...0}}_k}\ar@{->>}[drrrr]
\\
&&&X_{\underbrace{\scriptstyle{0...0}}_k0}\ar@{^(->}[urrrr]
&&&&&&&&X_{\underbrace{\scriptstyle{0...0}}_k1}
\\
&\hdots
&&&&
\hdots
&&&&\hdots
&&&&\hdots\ar@{->>}[dr]
\\
X_{\underbrace{00...0}_{n+k}}\ar@{^(->}[ur]&&\hdots&&\hdots&&\hdots&&\hdots&&\hdots&&\hdots&&X_{\underbrace{11...1}_{n+k}}
}\]}}
\end{remark}

\section{Left filterable case}\label{Sec:left}

In the previous sections we have studied in detail the theory of  right filterable pairs of $t$-structures $(\cD,\cT)$ satisfying the condition 
$\cD^{\leq -m}\subseteq \cT^{\leq 0} \subseteq \cD^{\leq 0}$ with $m\in \Bbb N$.
This section is devoted to collect the same results in the dual hypothesis of left filterability.
First of all we remark that this duality is obtained once we have fixed the wideness of the involved pair $(\cD,\cT)$, i.e.,  the natural number $m$ such that
\[\cD^{\leq -m}\subseteq \cT^{\leq 0} \subseteq \cD^{\leq 0};\]
in this case  the natural number $m$ plays a role analogous of the dimension in the Serre duality for sheaves. 

\begin{definition}\label{def:{}_iD}
Let $(\cD,\cT)$ be a left filterable pair of $t$-structures with
\[\cD^{\leq -m}\subseteq \cT^{\leq 0} \subseteq \cD^{\leq 0};\]
We indicate with ${}_i\cD$ the $t$-structure whose aisle is ${}_i\cD^{\leq 0}=\cD^{\leq 0}\cap \cT^{\leq m-i}$, by ${}_i\cH$ its heart, and by $({}_i\cX,{}_i\cY)$ the torsion pair in ${}_i\cH$ defined by
${}_i\cX:={}_{i+1}\cD^{\leq 0}\cap{}_i\cH$, ${}_i\cY:= {}_{i+1}\cD^{\geq 1}\cap{}_i\cH$.
The $t$-structures ${}_i\cD$, the hearts ${}_i\cH$, the torsion classes ${}_i\cX$ and the torsion-free classes ${}_i\cY$ are called the \emph{left basic $t$-structures}, the \emph{left basic hearts}, the \emph{left basic torsion classes} and the \emph{left basic torsion-free classes} of $(\cD,\cT)$.
\end{definition}

\begin{lemma}\label{lemma:inclusionileft}
Let $(\cD,\cT)$ be a left filterable pair of $t$-structures. 
Then, for any $i,\ell\in\mathbb Z$ we have the following inclusions
\[
{}_i\cD^{\leq \ell}\supseteq {}_{i+1}\cD^{\leq \ell}, \quad
{}_i\cD^{\leq \ell}\supseteq {}_{i-1}\cD^{\leq \ell-1} \quad\text{and}\quad
{}_i\cD^{\geq \ell}\subseteq {}_{i+1}\cD^{\geq \ell}, \quad
{}_i\cD^{\geq \ell}\subseteq {}_{i-1}\cD^{\geq \ell-1}.
\]
In particular
\begin{enumerate}
\item $({}_j\cD,{}_\ell\cD)$ is a left filterable pair of $t$-structures for any 
$j\leq\ell\in\mathbb Z$;
\item ${}_{i+1}\cD$ is obtained by tilting ${}_i\cD$ with respect to the torsion pair 
$({}_i\cX,{}_i\cY)$;
\item ${}_i\cX={}_{i+1}\cD^{\leq 0}\cap {}_i\cD^{\geq 0}$ and 
${}_i\cY={}_i\cD^{\leq 0}\cap {}_{i+1}\cD^{\geq 1}$;
\item for each $m\geq 0$ we have ${}_{i+m}\cH\subseteq {}_i\cD^{[-m,0]}$, while ${}_i\cH\subseteq {}_{i+m}\cD^{[0,m]}$;
\item if $\cT^{\leq 0}\subseteq \cD^{\leq 0}$ and $i\geq 0$, then ${}_i\cX[m-i]=\cT^{\leq -1}\cap{}_i\cH[m-i]$.
\end{enumerate}
\end{lemma}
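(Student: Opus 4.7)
The plan is to mirror, step by step, the proof of Lemma~\ref{lemma:inclusioni}, exploiting the explicit formula ${}_i\cD^{\leq \ell}={}_i\cD^{\leq 0}[-\ell]=\cD^{\leq \ell}\cap \cT^{\leq m-i+\ell}$, valid for any $i,\ell\in\mathbb{Z}$ and obtained by shifting the definition of the left basic $t$-structures. The four listed inclusions then reduce to monotonicity of the families $\cD^{\leq -}$ and $\cT^{\leq -}$: the inclusion ${}_{i+1}\cD^{\leq \ell}\subseteq {}_i\cD^{\leq \ell}$ follows from $\cT^{\leq m-i-1+\ell}\subseteq \cT^{\leq m-i+\ell}$, while ${}_{i-1}\cD^{\leq \ell-1}\subseteq {}_i\cD^{\leq \ell}$ follows from $\cD^{\leq \ell-1}\subseteq \cD^{\leq \ell}$ together with the coincidence of the $\cT$-indices. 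The corresponding co-aisle inclusions are obtained by the standard observation that for a $t$-structure the co-aisle at level $1$ is the right orthogonal of the aisle at level $0$, which reverses inclusions.

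For (1), writing ${}_j\cD^{\leq a}\cap {}_\ell\cD^{\leq 0}=\cD^{\leq \min(a,0)}\cap \cT^{\leq m+\min(a-j,-\ell)}$ and dividing into the three cases $a\geq 0$, $j-\ell\leq a<0$, and $a<j-\ell$ (recall $j\leq\ell$, so $j-\ell\leq 0$), one obtains respectively ${}_\ell\cD^{\leq 0}$, ${}_{\ell+a}\cD^{\leq a}$, and ${}_j\cD^{\leq a}$, each of which is a shift of a left basic aisle and hence itself an aisle. Point (2) is immediate from Polishchuk's Proposition~\ref{polisch} applied to the pair $({}_i\cD,{}_{i+1}\cD)$, since the aisle inclusions just proved give ${}_i\cD^{\leq -1}\subseteq {}_{i+1}\cD^{\leq 0}\subseteq {}_i\cD^{\leq 0}$; the resulting torsion pair coincides by construction with $({}_i\cX,{}_i\cY)$. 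For (3), starting from ${}_i\cX={}_{i+1}\cD^{\leq 0}\cap {}_i\cD^{\leq 0}\cap {}_i\cD^{\geq 0}$ one absorbs ${}_{i+1}\cD^{\leq 0}\subseteq {}_i\cD^{\leq 0}$; the dual simplification for ${}_i\cY$ uses ${}_{i+1}\cD^{\geq 1}\subseteq {}_i\cD^{\geq 0}$, which is the second co-aisle inclusion specialised to indices $i+1$ and $\ell=1$.

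Point (4) is an immediate iteration of the two aisle and two co-aisle inclusions. For (5), since ${}_i\cH\subseteq {}_i\cD^{\leq 0}\subseteq \cD^{\leq 0}$, the condition $X\in {}_{i+1}\cD^{\leq 0}=\cD^{\leq 0}\cap \cT^{\leq m-i-1}$ for $X\in {}_i\cH$ reduces to $X\in \cT^{\leq m-i-1}$; shifting by $[m-i]$ converts $\cT^{\leq m-i-1}$ into $\cT^{\leq -1}$, yielding the asserted equality. The only genuine asymmetry with respect to Lemma~\ref{lemma:inclusioni} lies in (1): because here the integer $i$ governs the $\cT$-threshold rather than the $\cD$-threshold, the middle branch of the case analysis cannot be rewritten in the form ${}_?\cD^{\leq 0}$ but must be recorded as the shifted aisle ${}_{\ell+a}\cD^{\leq a}$. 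This is the sole non-routine bookkeeping point; all other steps are direct transpositions of the right-filterable arguments.
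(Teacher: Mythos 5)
Your proof is correct and is exactly the routine dualization of Lemma~\ref{lemma:inclusioni} that the paper intends (the paper in fact omits the proof of this lemma entirely, stating it as the left-filterable counterpart); your explicit formula ${}_i\cD^{\leq \ell}=\cD^{\leq \ell}\cap\cT^{\leq m-i+\ell}$ and the resulting case analysis check out, including the observation that the middle branch in (1) is the shifted aisle ${}_{\ell+a}\cD^{\leq a}$ rather than an unshifted basic aisle.
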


The following are the left version of Theorem~\ref{teo:genPol1} and of Corollary~\ref{cor:genPol1}.
We prefer to provide the proof of both these  dual statements since they clarify  
the dual approach.

\begin{theorem}\label{teo:genPolleft}
A left filterable pair $(\cD,\cT)$ of $t$-structures of type $(n,0)$
is obtained by an iterated HRS procedure of length $n$. 
\end{theorem}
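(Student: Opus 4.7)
The plan is to mirror the argument of Theorem~\ref{teo:genPol1} on the dual side, using the left basic $t$-structures ${}_i\cD$ from Definition~\ref{def:{}_iD} in place of the right basic ones $\cD_i$. Since $(\cD,\cT)$ is of type $(n,0)$ we have $\cD^{\leq -n}\subseteq \cT^{\leq 0}\subseteq \cD^{\leq 0}$, so the wideness parameter in Definition~\ref{def:{}_iD} is $m=n$, and the aisle of ${}_i\cD$ is ${}_i\cD^{\leq 0}=\cD^{\leq 0}\cap\cT^{\leq n-i}$.

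First I would identify the endpoints of the chain ${}_0\cD,\ldots,{}_n\cD$. Shifting the containment $\cD^{\leq -n}\subseteq\cT^{\leq 0}$ by $n$ gives $\cD^{\leq 0}\subseteq\cT^{\leq n}$, hence ${}_0\cD^{\leq 0}=\cD^{\leq 0}\cap\cT^{\leq n}=\cD^{\leq 0}$ and ${}_0\cD=\cD$. Symmetrically, using $\cT^{\leq 0}\subseteq\cD^{\leq 0}$, one gets ${}_n\cD^{\leq 0}=\cD^{\leq 0}\cap\cT^{\leq 0}=\cT^{\leq 0}$, so ${}_n\cD=\cT$.

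Next I would check that each consecutive pair $({}_i\cD,{}_{i+1}\cD)$, for $0\leq i\leq n-1$, fits the hypothesis of Polishchuk's Proposition~\ref{polisch}, namely
\[
{}_i\cD^{\leq -1}\subseteq {}_{i+1}\cD^{\leq 0}\subseteq {}_i\cD^{\leq 0}.
\]
Both inclusions follow at once from Lemma~\ref{lemma:inclusionileft}: the right one from ${}_{i+1}\cD^{\leq 0}\subseteq{}_i\cD^{\leq 0}$, and the left one from
\[
{}_i\cD^{\leq -1}=\cD^{\leq -1}\cap\cT^{\leq n-i-1}\subseteq \cD^{\leq 0}\cap\cT^{\leq n-i-1}={}_{i+1}\cD^{\leq 0}.
\]
Proposition~\ref{polisch} then identifies ${}_{i+1}\cD$ with the tilt of ${}_i\cD$ with respect to the torsion pair
\[
({}_i\cX,{}_i\cY)=({}_{i+1}\cD^{\leq 0}\cap{}_i\cH,\ {}_{i+1}\cD^{\geq 1}\cap{}_i\cH)
\]
in the heart ${}_i\cH$ of ${}_i\cD$, which is precisely the torsion pair recorded in Definition~\ref{def:{}_iD}.

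Concatenating these $n$ single tilts exhibits $(\cD,\cT)=({}_0\cD,{}_n\cD)$ as the output of an iterated HRS procedure of length $n$ in the sense of \ref{num:HRSproc}, which is the claim. The only thing that requires attention, rather than being a genuine conceptual obstacle, is the bookkeeping: left filterability only guarantees that each intersection $\cD^{\leq 0}\cap\cT^{\leq n-i}$ is an aisle, so one has to know that the intermediate ${}_i\cD$ are genuine $t$-structures and that the chain stays entirely inside the class of $t$-structures on $\cC$ before Proposition~\ref{polisch} can be invoked termwise; this is exactly what Lemma~\ref{lemma:inclusionileft}(1) supplies.
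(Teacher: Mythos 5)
Your proof is correct and follows essentially the same route as the paper: the left basic $t$-structures ${}_i\cD$ with aisle $\cD^{\leq 0}\cap\cT^{\leq n-i}$, the endpoint identifications ${}_0\cD=\cD$ and ${}_n\cD=\cT$, the verification of ${}_i\cD^{\leq -1}\subseteq{}_{i+1}\cD^{\leq 0}\subseteq{}_i\cD^{\leq 0}$ via Lemma~\ref{lemma:inclusionileft}, and the termwise application of Proposition~\ref{polisch}. (Your displayed chain of inclusions is in fact the correct one; the paper's own proof states it with the indices transposed, which is evidently a typo.)
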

\begin{proof}
Denote by ${}_{i}\cD$, $i\in\mathbb Z$, the left basic $t$-structures of the pair $(\cD,\cT)$: by Definition~\ref{def:{}_iD}, ${}_{i}\cD$ is the $t$-structure whose aisle is $\cD^{\leq 0}\cap\cT^{\leq n-i}$. It is ${}_0\cD=\cD$ and we have:
\[\xymatrix@-1,7pc{
\cT^{\leq 0}\subseteq&...\subseteq{}_1\cD^{\leq 0}\subseteq&{}_0\cD^{\leq 0}&=:\cD^{\leq 0}\\
&&{}\ar[u]
}
\]
The aisle of the $t$-structure ${}_{1}\cD$ is
\[{}_{1}\cD^{\leq 0}:=
\cD^{\leq 0}\cap \cT^{\leq n-1}
.\] 
By Lemma~\ref{lemma:inclusionileft} we have
${}_{1}\cD^{\leq -1}\subseteq {}_{0}\cD^{\leq 0}\subseteq {}_{1}\cD^{\leq 0}$ and hence by Proposition~\ref{polisch} the $t$-structure ${}_{1}\cD$ is obtained by tilting 
${}_{0}\cD$ with respect to the torsion pair
\[
({}_0\cX,{}_0\cY):=({}_{1}\cD^{\leq 0}\cap {}_0\cH, {}_{1}\cD^{\geq 1}\cap {}_0\cH)\]
on the heart ${}_0\cH$ of ${}_0\cD$:
\[\xymatrix@-1,7pc{
\cD^{\leq -n}\subseteq
\cT^{\leq 0}\subseteq  ...\subseteq
& {}_{1}\cD^{\leq 0}&\subseteq {}_0\cD^{\leq 0}
= \cD^{\leq 0}.\\
&{}\ar[u]
}
\]
The aisle of the $t$-structure ${}_{2}\cD$ is
\[{}_{2}\cD^{\leq 0}:=
\cD^{\leq 0}\cap \cT^{\leq n-2}.\]
By Lemma~\ref{lemma:inclusionileft} we have ${}_{1}\cD^{\leq -1}\subseteq {}_{2}\cD^{\leq 0}\subseteq {}_{1}\cD^{\leq 0}$ and hence
the $t$-structure ${}_{2}\cD$ is obtained by tilting ${}_{1}\cD$ with respect to the torsion pair
\[
({}_{1}\cX,{}_{1}\cY):=({}_{2}\cD^{\leq 0}\cap {}_{1}\cH, {}_{2}\cD^{\geq 1}\cap {}_{1}\cH)\]
on the heart ${}_1\cH$ of $\cD_{1}$. At any step we get 
${}_{i}\cD^{\leq 0}=\cD^{\leq 0}\cap \cT^{\leq n-i}$; for $i=n$ we obtain 
${}_{n}\cD^{\leq 0}=
\cD^{\leq 0}\cap \cT^{\leq 0}=
\cT^{\leq 0}$:
\[\xymatrix@-1,7pc{
\cD^{\leq -n}\subseteq& \cT^{\leq 0}=&{}_{n}\cD^{\leq 0}& 
\subseteq  \cdots\subseteq  {}_{1}\cD^{\leq 0}\subseteq {}_0\cD^{\leq 0}
= \cD^{\leq 0} .\\
&&{}\ar[u]
}
\]
\end{proof}
\begin{corollary}\label{cor:leftgenPol1}
Let $(\cD,\cT)$ be a left filterable pair of $t$-structures. The pair $(\cD,\cT)$ verifies
\[\cD^{\leq -m}\subseteq \cT^{\leq 0}\subseteq \cD^{\leq 0}\quad\text{or equivalently}\quad
\cD^{\geq 0}\subseteq \cT^{\geq 0}\subseteq \cD^{\geq -m}\]
if and only if $\cT$ is a $t$-structure obtained by $\cD$ with an iterated HRS procedure of length $m$. 
\end{corollary}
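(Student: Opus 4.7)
The plan is to split the statement into its two implications. For the ``if'' direction, I would note that an iterated HRS procedure $\cD=\cE_0,\cE_1,\ldots,\cE_m=\cT$ of length $m$ as in \ref{num:HRSproc} produces, at every step, aisles satisfying $\cE_i^{\leq -1}\subseteq \cE_{i+1}^{\leq 0}\subseteq \cE_i^{\leq 0}$, so iteration yields $\cD^{\leq -m}\subseteq \cT^{\leq 0}\subseteq \cD^{\leq 0}$; this direction is essentially a restatement of the HRS construction.

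For the nontrivial ``only if'' direction, I would mirror the proof of Corollary~\ref{cor:genPol1} using the left basic $t$-structures ${}_i\cD$ of Definition~\ref{def:{}_iD}, whose aisles are ${}_i\cD^{\leq 0}=\cD^{\leq 0}\cap \cT^{\leq m-i}$. By Remark~\ref{rem:tipo} the hypothesis forces $(\cD,\cT)$ to be of type $(n,k)$ with $n+k\leq m$, so that $\cD^{\leq -n-k}\subseteq\cT^{\leq 0}\subseteq \cD^{\leq -k}$. I would then partition the indices $0,\ldots,m$ into three consecutive blocks. For $0\leq i\leq m-n-k$, the inequality $m-i\geq n+k$ together with $\cD^{\leq 0}\subseteq \cT^{\leq n+k}$ gives ${}_i\cD=\cD$, so each transition $({}_{i-1}\cD,{}_i\cD)$ is of type $(0,0)$ and realized by the trivial tilt with respect to the torsion pair $(\cH_\cD,0)$. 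For $m-k\leq i\leq m$, the inclusion $\cT^{\leq m-i}\subseteq \cT^{\leq k}\subseteq \cD^{\leq 0}$ gives ${}_i\cD^{\leq 0}=\cT^{\leq m-i}$, so ${}_i\cD=\cT[i-m]$ and each transition $({}_{i-1}\cD,{}_i\cD)$ is of type $(0,1)$, realized by the trivial tilt with respect to $(0,\cH_{{}_{i-1}\cD})$. For the central block $m-n-k\leq i\leq m-k$ I would invoke Theorem~\ref{teo:genPolleft} applied to the sub-pair $(\cD,\cT[-k])$, which is left filterable and of type $(n,0)$, to produce $n$ nontrivial tilts of type $(1,0)$. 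Concatenating the three blocks produces an iterated HRS procedure of length $(m-n-k)+n+k=m$.

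\textbf{The main obstacle} lies in the identification, in the central block, of ${}_{m-n-k+j}\cD$ (for $0\leq j\leq n$) with the $j$-th left basic $t$-structure of the sub-pair $(\cD,\cT[-k])$, which is needed to legitimately invoke Theorem~\ref{teo:genPolleft}. This amounts to comparing aisles: the $j$-th left basic $t$-structure of $(\cD,\cT[-k])$ (whose wideness parameter is $n$) has aisle $\cD^{\leq 0}\cap (\cT[-k])^{\leq n-j}=\cD^{\leq 0}\cap \cT^{\leq k+n-j}={}_{m-n-k+j}\cD^{\leq 0}$, so the two chains of $t$-structures coincide. With this identification in hand, Theorem~\ref{teo:genPolleft} supplies the $n$ nontrivial HRS steps in the middle block and the two outer blocks reduce to the bookkeeping already spelled out in the proof of Corollary~\ref{cor:genPol1}.
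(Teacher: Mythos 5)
Your proposal is correct and follows essentially the same route as the paper's proof: trivial tilts by $(\cH_\cD,0)$ for the first $m-n-k$ steps, an application of Theorem~\ref{teo:genPolleft} to the type-$(n,0)$ pair $(\cD,\cT[-k])$ for the middle $n$ steps, and trivial tilts by $(0,\cH_\cT[-k+j])$ for the final $k$ steps. The aisle comparison you single out as the main obstacle (identifying ${}_{m-n-k+j}\cD$ with the $j$-th left basic $t$-structure of the sub-pair) is exactly the point the paper leaves implicit with ``dually to the right case,'' and your verification of it is correct.
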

\begin{proof}
Let us suppose $\cD^{\leq -m}\subseteq \cT^{\leq 0}\subseteq \cD^{\leq 0}$. The pair $(\cD,\cT)$ is of type $(n,k)\in\mathbb N\times \mathbb N$ with $n+k\leq m$. Denote by ${}_{i}\cD$, $i\in\mathbb Z$, the left basic $t$-structures of the pair $(\cD,\cT)$. 
Dually to the right case we have 
${}_i\cD=\cD$ for any $0\leq i \leq m-(n+k)$. 
Tilting $m-n-k$ times the $t$-structure $\cD$ with respect to the trivial torsion pair $(\cH_\cD,0)$ in the heart $\cH_\cD$ of $\cD$ we get the $t$-structures 
${}_0\cD=\dots={}_{m-n-k}\cD$:
\[\xymatrix@-1,7pc{
\cD^{\leq -m}\subseteq \cT^{\leq 0}\subseteq \cT^{\leq k}\subseteq ...\subseteq
&{}_{m-n-k}\cD^{\leq 0}&
= \dots=\cD_0^{\leq 0}= \cD^{\leq 0} .\\
&{}\ar[u]
}
\]
Now $({}_{m-n-k}\cD, \cT[-k])=({}_{0}\cD, \cT[-k])$ is a pair of $t$-structures of type $(n,0)$. By Theorem~\ref{teo:genPolleft} with an iterated HRS procedure of length $n$ we get ${}_{m-k}\cD=\cT[-k]$.
Finally we have to adjust the shift. 
Tilting ${}_{m-k}\cD=\cT[-k]$ with respect to the torsion pair $(0,\cH_{\cT}[-k])$ we get the $t$-structure ${}_{m-k+1}\cD=\cT[-k+1]$.
Next, tilting ${}_{m-k+1}\cD$ with respect to the torsion pair $(0,\cH_{\cT}[-k+1])$ we get the  $t$-structure ${}_{m-k+2}\cD=\cT[-k+2]$.
Iterating this procedure $k$ times we get  ${}_m\cD=\cT$:
\[\xymatrix@-1.7pc{
\cD^{\leq -m}\subseteq \cT^{\leq 0}=&\!\!\!{}_{m}\cD^{\leq 0}&\!\!\!\subseteq ...\subseteq \cT^{\leq k}={}_{m-k}\cD^{\leq 0}\subseteq ...\subseteq
&\!\!\!{}_{m-n-k}\cD^{\leq 0}&
\!\!\!= \dots= \cD^{\leq 0} .\\
&{}\ar[u]
}
\]
The other direction follows by the construction of the iterated HRS procedure (see the proof of Theorem~\ref{teo:genPolleft}).
\end{proof}
\begin{remark}\label{rem:stepsleft}
Consider a left filterable pair $(\cD,\cT)$ of $t$-structures of type $(n,k)$ satisfying
$\cD^{\leq -m}\subseteq \cT^{\leq 0}\subseteq \cD^{\leq 0}$
as in Corollary~\ref{cor:leftgenPol1}; then
 the basic left $t$-structures ${}_i\cD$ (see Definition~\ref{def:{}_iD}) of $(\cD,\cT)$ satisfy the following properties:
\begin{enumerate}
\item For $1\leq i\leq m-n-k$, the pairs $({}_{i-1}\cD,{}_i\cD)$ are of type $(0,0)$  i.e., $\cD={}_0\cD=...={}_{m-n-k}\cD$.
\item For $m-n-k+1\leq i\leq m-k$, the pairs $({}_{i-1}\cD,{}_i\cD)$ are of type $(1,0)$; the pair
$({}_0\cD:=\cD,{}_i\cD)$ is of type $(i-m+n+k,0)$, while the pair $({}_i\cD,{}_m\cD=\cT)$ is of type $(m-k-i,k)$. In particular ${}_{m-k}\cD=\cT[-k]$.
\item For $m-k+1\leq i\leq m$, the pairs $({}_{i-1}\cD,{}_i\cD)$ are of type $(0,1)$, i.e., ${}_{m-k+1}\cD=\cT[-k+1]$, $_{m-k+2}\cD=\cT[-k+2]$, $\dots$
${}_m\cD=\cT$.
\end{enumerate}
\end{remark}

\bigskip

Let $(\cD, \cT)$ be a left filterable pair of $t$-structures of type $(n,0)$. 
We have 
\[\cD^{\leq -n}\subseteq\cT^{\leq 0}\subseteq \cD^{\leq 0},\]
and therefore by Theorem~\ref{teo:genPolleft} the pair $(\cD, \cT)$ is obtained by an iterated HRS procedure of length $n$. Denoted by ${}_0\cD=\cD$, ${}_1\cD$,..., ${}_n\cD=\cT$ the left basic $t$-structures of $(\cD, \cT)$, we recall that for each $i=0,...,n-1$,
any pair $({}_{i}\cD, {}_{i+1}\cD)$ is of type $(1,0)$, i.e., ${}_{i+1}\cD$ is obtained by tilting ${}_{i}\cD$ with respect to the non trivial torsion pair
$({}_i\cX,{}_i\cY):=({}_{i+1}\cD^{\leq 0}\cap{}_i\cH,\; {}_{i+1}\cD^{\geq 1}\cap {}_i\cH)$.

Let us briefly summarise the left $t$-tree.

\begin{theorem}\label{ltree}
Let $(\cD, \cT)$ be a left filterable pair of $t$-structures of type $(n,0)$ in $\cC$.
For any object $X$ in $\cH_\cD$ one can functorially construct its \emph{left $t$-tree}
{\tiny{\[\xymatrix@-1.75pc{
&&&&&&&&X\ar@{->>}[drrrr]
\\
&&&&{}_0X\ar@{^(->}[urrrr]\ar@{->>}[drr]&&&&&&&&{}_1X\ar@{->>}[drr]
\\
&&{}_{00}X
\ar@{^(->}[urr]&&&&
{}_{01}X
&&&&{}_{10}X
\ar@{^(->}[urr]&&&&{}_{11}X
\\
&\hdots&&\hdots&&\hdots&&\hdots&&\hdots&&\hdots&&\hdots&&\hdots\ar@{->>}[dr]
\\
{}_{\underbrace{00...0}_n}X\ar@{^(->}[ur]&&\hdots&&\hdots&&\hdots&&\hdots&&\hdots&&\hdots&&\hdots&&{}_{\underbrace{11...1}_n}X
}\]}}
whose branches have $n+1$ vertices and where for each $\ell=0,...,n-1$ the sequence
\[0\to {}_{i_1...i_{\ell} 0}X\to {}_{i_1...i_\ell}X\to {}_{i_1...i_{\ell} 1}X\to 0\]
is a short exact sequence in the heart ${}_{\ell}\cH[-(i_1+\cdots+i_\ell)]$ with  
${}_{i_1...i_{\ell}0}X$ belonging to the torsion class ${}_{\ell}\cX[-(i_1+\cdots+i_{\ell})]$ and ${}_{i_1...i_{\ell} 1}X$ belonging to the torsion-free class ${}_{\ell}\cY[-(i_1+\cdots+i_{\ell})]$. 
\end{theorem}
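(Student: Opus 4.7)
The plan is to mirror verbatim the inductive construction of Theorem~\ref{tree}, replacing the right basic data $(\cD_i,\cH_i,\cX_i,\cY_i)$ with the left basic data $({}_i\cD,{}_i\cH,{}_i\cX,{}_i\cY)$ supplied by Definition~\ref{def:{}_iD}. Indeed, Theorem~\ref{teo:genPolleft} tells us that, since $(\cD,\cT)$ is left filterable of type $(n,0)$, the chain ${}_0\cD=\cD,\ {}_1\cD,\ \dots,\ {}_n\cD=\cT$ is obtained by an iterated HRS procedure in which each successive pair $({}_i\cD,{}_{i+1}\cD)$ is of type $(1,0)$, with ${}_{i+1}\cD$ being the tilt of ${}_i\cD$ along the torsion pair $({}_i\cX,{}_i\cY)$ in ${}_i\cH$.

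The first step is to verify the two \emph{bridging inclusions}
\[
{}_i\cX\subseteq {}_i\cH\cap{}_{i+1}\cH,\qquad {}_i\cY\subseteq {}_i\cH\cap{}_{i+1}\cH[-1],
\]
which allow the recursion to continue one level further. For the former, ${}_i\cX\subseteq {}_{i+1}\cD^{\leq 0}$ holds by definition, while ${}_i\cX\subseteq{}_i\cH\subseteq{}_i\cD^{\geq 0}\subseteq{}_{i+1}\cD^{\geq 0}$ by Lemma~\ref{lemma:inclusionileft}. For the latter, ${}_i\cY\subseteq{}_{i+1}\cD^{\geq 1}$ by definition, and since Lemma~\ref{lemma:inclusionileft} gives ${}_i\cD^{\leq 0}\subseteq{}_{i+1}\cD^{\leq 1}$ (equivalently ${}_i\cD^{\leq -1}\subseteq{}_{i+1}\cD^{\leq 0}$), one has ${}_i\cY\subseteq{}_i\cH\subseteq{}_i\cD^{\leq 0}\subseteq{}_{i+1}\cD^{\leq 1}$.

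Having these inclusions, I proceed by induction on $\ell$. Starting from $X\in\cH_\cD={}_0\cH$, the torsion pair $({}_0\cX,{}_0\cY)$ produces the short exact sequence $0\to {}_0X\to X\to {}_1X\to 0$ in ${}_0\cH$, with ${}_0X\in{}_0\cX\subseteq{}_1\cH$ and ${}_1X\in{}_0\cY\subseteq{}_1\cH[-1]$. Suppose inductively that, for some $1\leq\ell\leq n-1$, a vertex ${}_{i_1\dots i_\ell}X$ has been produced lying in the shifted heart ${}_\ell\cH[-(i_1+\cdots+i_\ell)]$. Apply the torsion pair $({}_\ell\cX,{}_\ell\cY)[-(i_1+\cdots+i_\ell)]$ in this shifted heart to obtain the short exact sequence
\[
0\to {}_{i_1\dots i_\ell 0}X\to {}_{i_1\dots i_\ell}X\to {}_{i_1\dots i_\ell 1}X\to 0
\]
in ${}_\ell\cH[-(i_1+\cdots+i_\ell)]$, with ${}_{i_1\dots i_\ell 0}X\in{}_\ell\cX[-(i_1+\cdots+i_\ell)]$ and ${}_{i_1\dots i_\ell 1}X\in{}_\ell\cY[-(i_1+\cdots+i_\ell)]$. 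By the bridging inclusions, the torsion summand lies in ${}_{\ell+1}\cH[-(i_1+\cdots+i_\ell)]$ and the torsion-free summand in ${}_{\ell+1}\cH[-(i_1+\cdots+i_\ell+1)]$, so the inductive hypothesis is restored and the procedure continues until $\ell=n-1$. This produces the binary tree of height $n$ displayed in the statement. Functoriality is automatic, as the torsion/torsion-free decomposition in any torsion pair of an abelian category is functorial, and the $t$-tree is just a finite composition of such decompositions.

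The only genuine obstacle in turning this plan into a proof is the verification of the bridging inclusions ${}_i\cX\subseteq{}_{i+1}\cH$ and ${}_i\cY\subseteq{}_{i+1}\cH[-1]$; everything else is a straightforward iteration. As shown above, however, these inclusions follow directly from the chain of aisle/co-aisle containments collected in Lemma~\ref{lemma:inclusionileft}, so no new ideas beyond those already used for Theorem~\ref{tree} are required.
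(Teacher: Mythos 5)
Your proof is correct and is exactly the argument the paper intends: the paper omits a proof of Theorem~\ref{ltree}, presenting it as the dual of Theorem~\ref{tree}, and your construction is precisely that dualization, with the bridging inclusions ${}_i\cX\subseteq{}_i\cH\cap{}_{i+1}\cH$ and ${}_i\cY\subseteq{}_i\cH\cap{}_{i+1}\cH[-1]$ verified correctly from Lemma~\ref{lemma:inclusionileft}, just as the corresponding inclusions are noted before Theorem~\ref{tree} in the right filterable case.
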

\begin{definition}
Let $(\cD, \cT)$ be a left filterable pair of $t$-structures of type $(n,0)$ and $X$ an object of $\cH_\cD$. We call the \emph{degree of the vertex} ${}_{i_1...i_\ell}X$ in the left $t$-tree of  $X$ the sum $i_1+\cdots+i_\ell$. The vertices 
${}_{i_1...i_n}X$ are called \emph{left $t$-leaves of the $t$-tree}, and the left $t$-leaf ${}_{\underbrace{\scriptstyle{0...0}}_{n-d}\underbrace{\scriptstyle{1...1}}_d}X$ is called the \emph{left leading $t$-leaf of degree $d$}.
\end{definition}
\begin{remark}
The $2^n$ \emph{left $t$-leaves} ${}_{i_1...i_{n}}X$ in ${}_n\cH[-(i_1+\cdots+i_n)]=\cH_\cT[-(i_1+\cdots+i_n)]$ produced in the last step of the construction of  a left $t$-tree are $\cT$-static objects in $\cC$ of degree $i_1+\cdots+i_n$ (see Definition~\ref{def:static}). Therefore we have got a decomposition of the objects in the heart $\cH_\cD$ in $\cT$-static pieces.
\end{remark}

\begin{definition}\label{def:generatedtreeleft}
Let $(\cD, \cT)$ be a left filterable pair of $t$-structures of type $(n,0)$ in $\cC$. Given the left $t$-tree of an object $X\in\cH_\cD$, we define \emph{subtree generated by the term ${}_{i_1...i_\ell}X$} the subtree of the left $t$-tree of $X$ which has ${}_{i_1...i_\ell}X$ as root.  
\end{definition}

\begin{proposition}\label{lcohX}
Let $X\in\cH_\cD$. For each $0\leq \ell\leq n$, the vertex ${}_{i_1...i_\ell}X$ in the $t$-tree of $X$ satisfies the following properties:
\begin{enumerate}
\item ${}_{i_1\dots i_\ell}X$ belongs to $\cT^{[i_1+\cdots+i_\ell,n-\ell+i_1+\cdots+i_\ell]}\subseteq\cT^{[0,n]}$;
\item $H^{i_1+\cdots+i_\ell}_\cT({}_{i_1\dots i_\ell}X)=
{}_{i_1...i_\ell\underbrace{\scriptstyle{0...0}}_{n-\ell}}X[i_1+\cdots+i_\ell]$;
\item
$H^{n-\ell+i_1+\cdots+i_\ell}_\cT({}_{i_1\dots i_\ell}X)=
{}_{i_1...i_\ell\underbrace{\scriptstyle{1...1}}_{n-\ell}}X[n-\ell+i_1+\cdots+i_\ell]$.
\end{enumerate}
\end{proposition}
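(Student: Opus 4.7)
The plan is to mirror the proof of Proposition~\ref{cohX} in the left-filterable setting, substituting the left basic $t$-structures ${}_i\cD$, the left basic hearts ${}_i\cH$, and the left basic torsion pairs $({}_i\cX,{}_i\cY)$ for their right counterparts. The argument is formal, resting on two ingredients: the short exact sequences produced by Theorem~\ref{ltree} and the typing of the sub-pair $({}_\ell\cD,\cT)$ via Lemma~\ref{heart-incl}.

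For point~(1), Theorem~\ref{ltree} gives ${}_{i_1\dots i_\ell}X\in {}_\ell\cH[-(i_1+\cdots+i_\ell)]$. I would first check that $({}_\ell\cD,\cT)$ is left filterable of type $(n-\ell,0)$: the inclusions ${}_n\cD^{\leq 0}=\cT^{\leq 0}\subseteq {}_\ell\cD^{\leq 0}$ and $\cT^{\leq -(n-\ell)}\subseteq {}_\ell\cD^{\leq 0}$ both follow from the chain of aisles in Lemma~\ref{lemma:inclusionileft} together with the standing assumption that $(\cD,\cT)$ is of type $(n,0)$. Applying Lemma~\ref{heart-incl} to this sub-pair yields ${}_\ell\cH\subseteq \cT^{[0,n-\ell]}$, and shifting by $-(i_1+\cdots+i_\ell)$ gives the claim.

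For points~(2) and~(3), I would exploit the long exact sequence of $\cT$-cohomology attached to the distinguished triangles
\[
{}_{i_1\dots i_\ell 0}X \to {}_{i_1\dots i_\ell}X \to {}_{i_1\dots i_\ell 1}X \stackrel{+1}{\to}
\]
arising from the short exact sequences of Theorem~\ref{ltree}. Writing $s:=i_1+\cdots+i_\ell$ for brevity, part~(1) shows that ${}_{i_1\dots i_\ell 1}X$ sits in $\cT^{[s+1,\,n-\ell+s]}$, so $H^{s}_\cT({}_{i_1\dots i_\ell 1}X)=0$, and the long exact sequence collapses to an isomorphism $H^{s}_\cT({}_{i_1\dots i_\ell 0}X)\cong H^{s}_\cT({}_{i_1\dots i_\ell}X)$. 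Iterating along the leftmost descendants down to the leaf ${}_{i_1\dots i_\ell\underbrace{\scriptstyle 0\dots 0}_{n-\ell}}X$, which is $\cT$-static of degree $s$, yields~(2). A symmetric argument applied to the topmost $\cT$-cohomology, using that ${}_{i_1\dots i_\ell 0}X\in \cT^{[s,\,n-\ell-1+s]}$ and therefore $H^{n-\ell+s}_\cT({}_{i_1\dots i_\ell 0}X)=0$, gives~(3) by iterating along the rightmost branch down to the leaf ${}_{i_1\dots i_\ell\underbrace{\scriptstyle 1\dots 1}_{n-\ell}}X$.

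No substantive obstacle is anticipated, since this is a verbatim dualisation of Proposition~\ref{cohX}. The only bookkeeping point is the inclusion ${}_\ell\cY\subseteq {}_{\ell+1}\cD^{\geq 1}\subseteq \cT^{\geq 1}$ needed to force the $\cT$-cohomology vanishings, which follows from Definition~\ref{def:{}_iD} combined with the chain ${}_{\ell+1}\cD^{\geq j}\subseteq\cdots\subseteq {}_n\cD^{\geq j}=\cT^{\geq j}$ supplied by Lemma~\ref{lemma:inclusionileft}.
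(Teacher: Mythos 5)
Your proposal is correct and is exactly the argument the paper intends: Proposition~\ref{lcohX} is stated without proof precisely because it is the verbatim dualisation of Proposition~\ref{cohX}, and your write-up reproduces that proof with the right substitutions (Theorem~\ref{ltree} for Theorem~\ref{tree}, Lemma~\ref{lemma:inclusionileft} for Lemma~\ref{lemma:inclusioni}, the typing of $({}_\ell\cD,\cT)$ as $(n-\ell,0)$ via Remark~\ref{rem:stepsleft}, then the long exact sequence of $\cT$-cohomology collapsing along the leftmost and rightmost branches). No gaps.
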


\begin{lemma}\label{heart-intleft}
Let $(\cD, \cT)$ be a left filterable pair of $t$-structures of type $(n,0)$ in $\cC$.
Then $\cH_\cD\cap\cH_{\cT}[-d]$ is equal to:

\[
\begin{cases}
      0& \text{if }d<0\text{ or }d>n; \\
      \displaystyle\bigcap_{i=0}^{n-1}{}_i\cX=\bigcap_{i=0}^{n}{}_i\cH& \text{if }d=0; \\
       \left(\displaystyle\bigcap_{i=0}^{n-d-1}{}_i\cX\right)\!\cap\!
     \displaystyle\left(\bigcap_{j=n-d}^{n-1}{}_j\cY[-j+n-d]\right)=\\
  {\phantom{AAAAAA}}   =
\displaystyle\left(\bigcap_{i=0}^{n-d}{}_i\cH\right)\!\cap\!\left(\bigcap_{j=n-d}^{n}{}_j\cH[-j+n-d]\right)& \text{if }0<d< n; 
\\
      \displaystyle\bigcap_{i=0}^{n-1}{}_i\cY[-i]=\bigcap_{i=0}^{n}{}_i\cH[-i]& \text{if }d=n.
\end{cases}
\]
\end{lemma}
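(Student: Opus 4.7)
The plan is to mirror the proof of Lemma~\ref{heart-int}, replacing right basic data by left basic data and using the inclusions of Lemma~\ref{lemma:inclusionileft} in place of those of Lemma~\ref{lemma:inclusioni}. The vanishing $\cH_\cD\cap\cH_\cT[-d]=0$ for $d<0$ or $d>n$ is immediate from Lemma~\ref{heart-incl}, which together with the hypothesis of type $(n,0)$ gives $\cH_\cT\subseteq\cD^{[-n,0]}$.

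For the effective range $0\leq d\leq n$, the workhorse will be the explicit aisle description ${}_i\cD^{\leq 0}=\cD^{\leq 0}\cap\cT^{\leq n-i}$ from Definition~\ref{def:{}_iD}, combined with the chain of inclusions of Lemma~\ref{lemma:inclusionileft}; another handy consequence, via Lemma~\ref{lemma:inclusionileft}(1), is that $({}_i\cD,\cT)$ is of type $(n-i,0)$, and hence $\cT^{\geq k}\subseteq{}_i\cD^{\geq k-(n-i)}$ for every $k\in\bZ$. Starting from $X\in\cH_\cD\cap\cH_\cT[-d]$, that is $X\in\cD^{[0,0]}\cap\cT^{[d,d]}$, I would verify membership of $X$ in each factor of the claimed intersection. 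For $0\leq i\leq n-d-1$ the containment $X\in{}_i\cX={}_{i+1}\cD^{\leq 0}\cap{}_i\cD^{\geq 0}$ holds because $X\in{}_0\cD^{\geq 0}\subseteq{}_i\cD^{\geq 0}$ and $X\in\cD^{\leq 0}\cap\cT^{\leq d}\subseteq\cD^{\leq 0}\cap\cT^{\leq n-i-1}$ (the inequality $d\leq n-i-1$ being precisely the bound on $i$). Symmetrically, for $n-d\leq j\leq n-1$, writing ${}_j\cY[-j+n-d]={}_j\cD^{\leq j-n+d}\cap{}_{j+1}\cD^{\geq 1+j-n+d}$, the aisle factor follows from $X\in\cD^{\leq 0}\cap\cT^{\leq d}$ together with $j-n+d\geq 0$, and the co-aisle factor from $\cT^{\geq d}\subseteq{}_{j+1}\cD^{\geq d-(n-j-1)}={}_{j+1}\cD^{\geq 1+j-n+d}$; the endpoints $d=0$ and $d=n$ are just the specializations in which one of the two indexing ranges becomes empty.

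The converse inclusion, and the equality with the description in terms of the hearts ${}_i\cH$, follow by establishing once and for all the inclusions ${}_i\cX\subseteq{}_i\cH\cap{}_{i+1}\cH$ and ${}_i\cY[k]\subseteq{}_i\cH[k]\cap{}_{i+1}\cH[k-1]$, both of which unravel directly from Lemma~\ref{lemma:inclusionileft}. Telescoping them over the two ranges yields the middle expression $\bigcap_{i=0}^{n-d}{}_i\cH\cap\bigcap_{j=n-d}^{n}{}_j\cH[-j+n-d]$, which in turn sits inside ${}_0\cH\cap{}_n\cH[-d]=\cH_\cD\cap\cH_\cT[-d]$, closing the circle of inclusions. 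The main point of care will be the shift bookkeeping on the torsion-free side: the shift $-j+n-d$ becomes negative as soon as $d\geq 2$ and $j>n-d$, and matching it cleanly with the co-aisle inclusion coming from the type $(n-j-1,0)$ of $({}_{j+1}\cD,\cT)$ is the only delicate arithmetic step of the argument.
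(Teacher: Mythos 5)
Your proof is correct and follows essentially the route the paper intends: Lemma~\ref{heart-intleft} is stated in Section~4 without proof as the left-filterable dual of Lemma~\ref{heart-int}, and your argument is the faithful adaptation of that proof, with the circle of inclusions $\cH_\cD\cap\cH_\cT[-d]\subseteq(\bigcap{}_i\cX)\cap(\bigcap{}_j\cY[\cdot])\subseteq(\bigcap{}_i\cH)\cap(\bigcap{}_j\cH[\cdot])\subseteq\cH_\cD\cap\cH_\cT[-d]$ closed exactly as in the right-handed case and with the shift bookkeeping on the torsion-free factors handled correctly. (The only cosmetic point is that the statement ``$({}_i\cD,\cT)$ is of type $(n-i,0)$'' is really Remark~\ref{rem:stepsleft}(2) rather than Lemma~\ref{lemma:inclusionileft}(1), though the inclusion you actually use, $\cT^{\geq 0}\subseteq{}_{j+1}\cD^{\geq -(n-j-1)}$, follows at once from ${}_{j+1}\cD^{\leq -(n-j-1)}\subseteq\cT^{\leq 0}$.)
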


\begin{proposition}\label{cor:torsonoleft}
Let $(\cD, \cT)$ be a left filterable pair of $t$-structures of type $(n,0)$ in $\cC$. 
An object $0\not=X\in{}_0\cH=\cH_\cD$ has a left $t$-tree with a unique non zero branch if and only if $X$  is $\cT$-static. In such a case, if $X$ is $\cT$-static of degree $d$, the unique non zero leaf is the left leading leaf of degree $d$.
\end{proposition}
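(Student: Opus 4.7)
The proof plan is to mirror the argument used for Proposition~\ref{cor:torsono}, exploiting the left analogues (Theorem~\ref{ltree} and Lemma~\ref{heart-intleft}) in place of the right-side tools. The statement is an ``if and only if'', so I would split the argument into two directions and handle them separately.

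For the easy direction, I would start from the assumption that the left $t$-tree of $X$ has a unique non zero branch. By Theorem~\ref{ltree}, every short exact sequence $0\to {}_{i_1\dots i_\ell 0}X \to {}_{i_1\dots i_\ell}X \to {}_{i_1\dots i_\ell 1}X \to 0$ occurring along the tree lives in the shifted heart ${}_\ell\cH[-(i_1+\cdots+i_\ell)]$. Along the unique non zero branch, at every level one of the two outer terms vanishes, so the relevant morphism (either the monomorphism from the torsion part or the epimorphism onto the torsion-free part) is an isomorphism. Composing these isomorphisms identifies $X$ with the single non zero leaf ${}_{i_1\dots i_n}X\in\cH_\cT[-(i_1+\cdots+i_n)]$, which is $\cT$-static by the left analogue of Remark~\ref{rem:ttree}.

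For the converse, assume $X\in\cH_\cD\cap\cH_\cT[-d]$ for some $d$, which forces $0\le d\le n$ by the first case of Lemma~\ref{heart-intleft}. I would then split into three cases paralleling that lemma: if $d=0$, then $X\in\bigcap_{i=0}^{n-1}{}_i\cX$, so at each step the torsion decomposition in ${}_i\cH$ returns ${}_{\underbrace{0\dots0}_{i+1}}X=X$ and kills the $1$-branch, producing the left leading leaf ${}_{\underbrace{0\dots0}_n}X=X$ of degree $0$; if $d=n$, then $X\in\bigcap_{i=0}^{n-1}{}_i\cY[-i]$, and each step ``picks the torsion-free component'' giving the left leading leaf ${}_{\underbrace{1\dots 1}_n}X=X$ of degree $n$; if $0<d<n$, then Lemma~\ref{heart-intleft} yields $X\in\bigcap_{i=0}^{n-d-1}{}_i\cX$ together with $X\in\bigcap_{j=n-d}^{n-1}{}_j\cY[-j+n-d]$, so the tree first collapses its $1$-branches for the initial $n-d$ steps, after which, starting from the vertex ${}_{\underbrace{0\dots 0}_{n-d}}X=X\in {}_{n-d}\cY$, it collapses its $0$-branches for the remaining $d$ steps, terminating at the left leading leaf ${}_{\underbrace{0\dots 0}_{n-d}\underbrace{1\dots 1}_d}X=X$ of degree $d$.

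The main obstacle is the shift bookkeeping when traversing the tree after one enters the ``torsion-free phase'': at the vertex ${}_{\underbrace{0\dots 0}_{n-d}\underbrace{1\dots 1}_{\ell-n+d}}X$ (for $n-d\le \ell\le n$) the ambient heart is ${}_\ell\cH[-(\ell-n+d)]$, and the relevant torsion pair is $({}_\ell\cX,{}_\ell\cY)[-(\ell-n+d)]$. To make the induction work, one must verify that the hypothesis ``$X\in{}_j\cY[-j+n-d]$'' supplied by Lemma~\ref{heart-intleft} for $j=\ell$ is precisely the statement that $X$ is torsion-free for that shifted torsion pair, forcing the $0$-branch to vanish and propagating $X$ itself down to the leaf. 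Once this alignment between the indices in Lemma~\ref{heart-intleft} and the shifts of Theorem~\ref{ltree} is checked (purely a matter of arithmetic on the subscripts), the iteration closes and the uniqueness of the non zero branch, together with the identification of its leaf as the left leading leaf of degree $d$, follows.
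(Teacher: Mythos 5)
Your proposal is correct and follows essentially the same route as the paper: the paper states this left version without proof (as the dual of Proposition~\ref{cor:torsono}), and your argument is precisely the correct dualisation of that proof, with the index/shift bookkeeping properly aligned between Lemma~\ref{heart-intleft} (membership $X\in{}_\ell\cY[-\ell+n-d]$ for $n-d\le\ell\le n-1$) and the shifted torsion pairs $({}_\ell\cX,{}_\ell\cY)[-(\ell-n+d)]$ of Theorem~\ref{ltree}.
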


\begin{proposition}
Let $(\cD, \cT)$ be a left filterable pair of $t$-structures of type $(n,0)$ in $\cC$. 
An object $0\not=X\in{}_0\cH=\cH_\cD$ has a $t$-tree whose non zero leaves are leading leaves in $\cH_\cD$ if and only if  the $\cT$-cohomologies $H^i_\cT X$ are $\cD$-static of degree $-i$ for each $0\leq i\leq n$. In this case all the vertices of the left  $t$-tree of $X$ belong to $\cH_\cD$, and moreover for each $0\leq i\leq n$
\begin{itemize}
\item ${}_{\underbrace{\scriptstyle{0...0}}_{n-i}}X=\tau^{\leq i} X$,
\item ${}_{\underbrace{\scriptstyle{0...0}}_{n-i}1}X=...={}_{\underbrace{\scriptstyle{0...0}}_{n-i}\underbrace{\scriptstyle{1...1}}_{i}}X=H^{i}_\cT X[-i]$,
\item ${}_{\underbrace{\scriptstyle{1...1}}_{i}0j_{i+2}...j_\ell}X=0$ for each $j_{i+2}...j_\ell\not=\underbrace{0...0}_{\ell-i-1}$, with $i+2\leq\ell\leq n$.
\end{itemize}
\end{proposition}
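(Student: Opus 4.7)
The plan is to dualise the proof of Proposition~\ref{prop:static} systematically: replace $\tau^{\geq\bullet}_\cT$ by $\tau^{\leq\bullet}_\cT$, replace the right-leading branch (iterated torsion-free part) by the left-leading branch (iterated torsion part), and carry out the same two implications.

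\textbf{``Only if'' direction.} Assume every non-zero leaf of the left $t$-tree of $X$ is a leading leaf lying in $\cH_\cD$. Working backwards from the leaves to the root and using descending induction on $\ell$, view each sequence
$0\to {}_{i_1\ldots i_\ell 0}X\to {}_{i_1\ldots i_\ell}X\to {}_{i_1\ldots i_\ell 1}X\to 0$
as a distinguished triangle in $\cC$: whenever the two outer vertices lie in $\cH_\cD$, the $\cD$-cohomology long exact sequence places the middle vertex in $\cH_\cD$ as well. Hence every vertex of the tree lies in $\cH_\cD$. Next, since non-leading leaves vanish, the subtree rooted at ${}_{\underbrace{0\ldots 0}_{n-d}1}X$ has exactly one non-zero leaf, the leading leaf of degree $d$; by the left analogue of Remark~\ref{rem:subtree} this subtree is a shift of the left $t$-tree of a $\cT$-static object, so Proposition~\ref{cor:torsonoleft} forces it to collapse onto its rightmost branch, giving ${}_{\underbrace{0\ldots 0}_{n-d}1}X=\cdots={}_{\underbrace{0\ldots 0}_{n-d}\underbrace{1\ldots 1}_d}X$. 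Finally, descending induction on $k$ starting from ${}_{\emptyset}X=X=\tau^{\leq n}_\cT X$ (which holds because $\cH_\cD\subseteq\cT^{[0,n]}$) matches each short exact sequence
$0\to{}_{\underbrace{0\ldots 0}_{k+1}}X\to {}_{\underbrace{0\ldots 0}_k}X\to {}_{\underbrace{0\ldots 0}_k 1}X\to 0$
with the Postnikov triangle
$\tau_\cT^{\leq n-k-1}X\to \tau_\cT^{\leq n-k}X\to H^{n-k}_\cT X[-(n-k)]\stackrel{+1}\to$
via uniqueness of the fiber, yielding ${}_{\underbrace{0\ldots 0}_{n-i}}X=\tau_\cT^{\leq i}X$ and ${}_{\underbrace{0\ldots 0}_{n-i}1}X=H^i_\cT X[-i]$; in particular each $H^i_\cT X$ is $\cD$-static of degree $-i$.

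\textbf{``If'' direction.} Assume each $H^i_\cT X\in\cH_\cD[i]$. Because $\cH_\cD\subseteq\cT^{[0,n]}$, the Postnikov tower of $X$ reduces to the triangles $\tau_\cT^{\leq i-1}X\to \tau_\cT^{\leq i}X\to H^i_\cT X[-i]\stackrel{+1}\to$ for $i=0,\ldots,n$, with $\tau_\cT^{\leq -1}X=0$ and $\tau_\cT^{\leq n}X=X$. Ascending induction on $i$ via the $\cD$-cohomology long exact sequence shows each $\tau_\cT^{\leq i}X$ lies in $\cH_\cD$, so every triangle becomes a short exact sequence in $\cH_\cD$. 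Using ${}_{n-i}\cD^{\leq 0}=\cD^{\leq 0}\cap\cT^{\leq i}$ from Definition~\ref{def:{}_iD} together with the inclusions $\cD^{\geq 0}\subseteq {}_{n-i}\cD^{\geq 0}$ from Lemma~\ref{lemma:inclusionileft}, one places $\tau_\cT^{\leq i}X\in {}_{n-i}\cH$, $\tau_\cT^{\leq i-1}X\in {}_{n-i}\cX$ (torsion), and $H^i_\cT X[-i]\in {}_{n-i}\cY$ (torsion-free, using also Lemma~\ref{heart-intleft}); hence the Postnikov sequence is exactly the torsion-pair decomposition in ${}_{n-i}\cH$. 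Uniqueness of this decomposition matches it with the $(n-i)$-th level of the left $t$-tree construction, yielding the first two bullets, while applying Proposition~\ref{cor:torsonoleft} to the (shifted) subtree rooted at $H^{i+1}_\cT X[-i-1]$ delivers the collapse onto its rightmost branch and the vanishing of the non-leading descendants.

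\textbf{Main obstacle.} The only delicate bookkeeping is the placement of the three terms of the Postnikov sequence in the correct left basic hearts and the correct torsion/torsion-free classes, so that it is recognised as the torsion-pair decomposition at the appropriate level of the tree. This step is the exact dual of the comparison made in Proposition~\ref{prop:static} and relies on the simultaneous use of Definition~\ref{def:{}_iD}, Lemma~\ref{lemma:inclusionileft}, and Lemma~\ref{heart-intleft}.
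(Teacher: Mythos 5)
Your argument is correct and is exactly the approach the paper intends: Section~4 states this proposition without proof as the formal dual of Proposition~\ref{prop:static}, and your two implications mirror that proof step by step (collapsing the static subtrees via Proposition~\ref{cor:torsonoleft}, identifying the all-zero branch with the $\cT$-Postnikov truncations $\tau^{\leq i}X$, and placing the Postnikov pieces in ${}_{n-i}\cX$ and ${}_{n-i}\cY$ via Lemmas~\ref{lemma:inclusionileft} and~\ref{heart-intleft}). The only caveat is that the third bullet as printed appears to be an undualized copy of the corresponding bullet of Proposition~\ref{prop:static}; your reading of it as the vanishing of the non-leading descendants is the correct dual statement.
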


\section{Tilting $t$-structures}\label{TiltTstr}

This paragraph is devoted to a detailed study of the so called
\emph{$n$-tilting} $t$-structures. The motivating example is the $t$-structure on the derived category of left
$R$-modules over a ring $R$ generated by a $n$-tilting
module ${}_RT$.

\begin{definition}
We say that a full subcategory $\cS$ of an abelian category $\cA$ \emph{cogenerates} (resp. \emph{generates}) $\cA$ if any object of $\cA$ embeds in an object of $\cS$ 
(resp. any object of $\cA$ is a quotient of an object in $\cS$).
\end{definition}

Generalising the notion of tilting (cotilting) torsion class introduced in \cite[Ch.~I, \S3]{MR1327209}, we give the following definition.
\begin{definition}\label{Tiltorpair}
A pair $(\cD,\cT)$ of $t$-structures in a triangulated category $\cC$ is \emph{$n$-tilting} (resp. \emph{$n$-cotilting}) if:
\begin{enumerate}
\item $(\cD,\cT)$ is filterable of type $(n,0)$, and
\item the full subcategory $\cH_\cD \cap \cH_\cT$ of $\cH_\cD$ cogenerates $\cH_\cD$
(resp. the full subcategory $\cH_\cD \cap \cH_\cT[-n]$ of $\cH_\cD$ generates $\cH_\cD$).
\end{enumerate}
The pair $(\cD,\cT)$ is \emph{right $n$-tilting} (resp. \emph{right $n$-cotilting}) if it is $n$-tilting (resp. $n$-cotilting) and right filterable; \emph{left $n$-tilting} and \emph{left $n$-cotilting} pair of $t$-structures are similarly defined.
\end{definition}
 
\begin{remark}
Consider an abelian category $\cA$ and  a non trivial torsion pair $(\cX,\cY)$ on $\cA$.
Denote by $\cD$ the natural $t$-structure on $D(\cA)$ and by $\cT_{(\cX,\cY)}$ the $t$-structure obtained by tilting $\cD$ with respect to the torsion pair $(\cX,\cY)$. By Remark~\ref{rem:polisfilterable}, $(\cD,\cT_{(\cX,\cY)})$ is both right and left filterable.
The torsion pair $(\cX,\cY)$ is tilting (resp. cotilting) in the sense of \cite[Ch.~I , \S3]{MR1327209}
if and only if the pair
the $t$-structures $(\cD,\cT_{(\cX,\cY)})$ is $1$-tilting (resp. $1$-cotilting).
\end{remark}
Following our terminology, Theorem~\ref{thm:HRSfundamental} proved by Happel, Reiten, Smal\o \ becomes:
\begin{theorem}\label{heartderiv}
Let $\cA$ be an abelian category and $\cD$ be the natural $t$-structure on $D(\cA)$. Suppose that $(\cD,\cT)$ is a $1$-tilting pair
(resp.
$1$-cotilting pair)
of $t$-structures; then there is a triangle equivalence
\[
\xymatrix{D(\cH_\cT) \ar[r]^-\simeq& D(\cH_\cD)=D(\cA)}
\]
which extends the natural inclusion $\cH_\cT \subseteq D(\cA)$.
\end{theorem}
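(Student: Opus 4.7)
The plan is to reduce the statement directly to Theorem~\ref{thm:HRSfundamental} via Polishchuk's Proposition~\ref{polisch}, after a short cohomological translation of the 1-(co)tilting hypothesis.

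First, since $(\cD,\cT)$ is filterable of type $(1,0)$, we have
\[
\cD^{\leq -1}\subseteq\cT^{\leq 0}\subseteq\cD^{\leq 0}.
\]
By Proposition~\ref{polisch}, the $t$-structure $\cT$ is therefore obtained from $\cD$ by tilting with respect to the torsion pair
\[
(\cX,\cY):=(\cT^{\leq 0}\cap\cH_\cD,\ \cT^{\geq 1}\cap\cH_\cD)
\]
in $\cH_\cD=\cA$.

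Next I would identify the heart intersections occurring in Definition~\ref{Tiltorpair} with the classes of this torsion pair. Using the inclusions $\cH_\cD\subseteq\cD^{\geq 0}\subseteq\cT^{\geq 0}$ and $\cH_\cD\subseteq\cD^{\leq 0}\subseteq\cT^{\leq 1}$ (the latter obtained by shifting $\cD^{\leq -1}\subseteq\cT^{\leq 0}$ by one), one gets
\[
\cH_\cD\cap\cH_\cT=\cH_\cD\cap\cT^{\leq 0}=\cX,\qquad
\cH_\cD\cap\cH_\cT[-1]=\cH_\cD\cap\cT^{\geq 1}=\cY.
\]
Consequently, the 1-tilting hypothesis that $\cH_\cD\cap\cH_\cT$ cogenerates $\cH_\cD$ reads exactly as the condition that the torsion class $\cX$ cogenerates $\cA$, i.e.\ that $(\cX,\cY)$ is a tilting torsion pair in the sense of \cite[Ch.~I,\S3]{MR1327209}; dually, the 1-cotilting hypothesis amounts to $\cY$ generating $\cA$, i.e.\ $(\cX,\cY)$ being a cotilting torsion pair.

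In the tilting case, Theorem~\ref{thm:HRSfundamental} now provides the sought triangle equivalence $D(\cH_\cT)\simeq D(\cA)$ extending the natural inclusion $\cH_\cT\subseteq D(\cA)$. The cotilting case follows from the dual statement of Theorem~\ref{thm:HRSfundamental}, also proved in \cite[Ch.~I, Theorem~3.3]{MR1327209} (and independently in \cite{BonVdBergh} and \cite{MR2486794}); alternatively, one may pass to the opposite triangulated category as in Remark~\ref{opposite}, where $\cT^\circ$ corresponds to the tilting torsion pair $(\cY,\cX)$ in $\cA^\circ$. I do not expect a substantive obstacle here: once Polishchuk's result and the cohomological identification of $\cX$ and $\cY$ are in hand, everything reduces to invoking the classical Happel--Reiten--Smal\o\ theorem, and the only mild care required is in making the dualization for the cotilting case precise.
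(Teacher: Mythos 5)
Your proof is correct and follows essentially the same route as the paper, which presents Theorem~\ref{heartderiv} as a direct reformulation of Theorem~\ref{thm:HRSfundamental} after establishing (in the remark preceding it) exactly the dictionary you spell out: via Proposition~\ref{polisch} the pair $(\cD,\cT)$ of type $(1,0)$ arises from the torsion pair $(\cX,\cY)=(\cH_\cD\cap\cT^{\leq 0},\cH_\cD\cap\cT^{\geq 1})$, and the $1$-(co)tilting condition on the pair of $t$-structures is precisely the (co)tilting condition on that torsion pair. Your write-up just makes explicit the cohomological identifications that the paper leaves implicit.
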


We want to extend this result to $n$-tilting $t$-structures.

\begin{lemma}\label{resolutions}\cite[Lemma~13.2.1]{kashiwara2006categories}.
Given a cogenerating (resp. generating) full subcategory $\cS$ of an abelian category $\cA$, 
any complex  $X^\bullet$ in $D^b(\cA)$ is quasi-isomorphic to a complex:
\[
S^\bullet = \cdots \to 0 \to S^{i} \to S^{i+1} \to\cdots\quad ({\rm{resp.}} \quad S^\bullet = \cdots\to S^{i-1} \to S^i \to 0 \to \cdots )
\]
where $S^j \in \cS$ for every $j\geq i$ (resp. for every $j\leq i$) and 
$i=\rm{min}\left\{k\in \bZ \,|\, H^k(X^\bullet)\neq0\right\}$
(resp. $i=\rm{max}\left\{k\in \bZ \,|\, H^k(X^\bullet)\neq0\right\}$).
\end{lemma}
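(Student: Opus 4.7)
The plan is to focus on the cogenerating case; the generating statement then follows by applying the same argument to the opposite abelian category $\cA^\circ$, where a generating subcategory becomes cogenerating. In the cogenerating case, I would first replace $X^\bullet$ by its canonical truncation $\tau^{\geq i} X^\bullet$ (quasi-isomorphic to $X^\bullet$ in $D^b(\cA)$), so as to assume from the outset that $X^j = 0$ for all $j < i$.

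I would then construct $S^\bullet$ inductively, one degree at a time. The cogenerating property provides an embedding $X^i \hookrightarrow S^i$ with $S^i \in \cS$. Forming the pushout
\[
P = S^i \sqcup_{X^i} X^{i+1}
\]
of this embedding along the differential $d^i$ produces a new complex with $0$ in degrees $<i$, the object $S^i \in \cS$ in degree $i$, the object $P$ in degree $i+1$, and $X^j$ in degrees $j \geq i+2$; the evident map from $X^\bullet$ to this new complex is a quasi-isomorphism. Iterating this construction at the next degree (embed $P$ into some $S^{i+1} \in \cS$ using the cogenerating property, push out along the next differential, and continue) and passing to the colimit yields a complex $S^\bullet$ with $S^j = 0$ for $j < i$ and $S^j \in \cS$ for $j \geq i$, together with a quasi-isomorphism $X^\bullet \to S^\bullet$. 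Observe that the degree-$j$ term of the colimit stabilises after $j - i + 1$ steps, so no completeness assumption on $\cA$ is needed.

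The main technical point — the only place where work is required — is verifying that each individual pushout step is a quasi-isomorphism. This reduces to a direct diagram chase: by the universal property of the pushout, the kernel of $S^i \to P$ recovers $\ker(d^i) = H^i(X^\bullet)$, and a parallel computation identifies the cohomology at $P$ with $H^{i+1}(X^\bullet)$; cohomologies in degrees $\geq i+2$ are trivially preserved since those terms are untouched. Once this elementary verification is in place, the rest of the argument is purely formal. A conceptually cleaner alternative would be to induct on the cohomological amplitude of $X^\bullet$ using the distinguished triangle
\[
H^i(X^\bullet)[-i] \to X^\bullet \to \tau^{\geq i+1} X^\bullet \to H^i(X^\bullet)[-i+1]
\]
and glue resolutions of the outer terms via a mapping cone (for which one uses that $\cS$ is additive, as stipulated in the preliminary notational conventions of the paper); but lifting the connecting morphism to an actual map of complexes makes this route heavier, so the step-by-step pushout construction proposed above seems more elementary.
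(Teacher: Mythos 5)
Your proposal is correct, and it is essentially the standard argument for this statement: the paper itself gives no proof, citing Kashiwara--Schapira, Lemma~13.2.1, and your step-by-step pushout construction (truncate below, embed the lowest term into $\cS$, push out along the differential, iterate, and observe degreewise stabilisation) is precisely how that cited lemma is proved. The one point worth phrasing more carefully is the quasi-isomorphism check: it follows most cleanly from the fact that a pushout along a monomorphism in an abelian category is bicartesian, so the induced map on cokernels of the vertical arrows is an isomorphism and the quotient complex of $X^\bullet\to(\cdots\to 0\to S^i\to P\to X^{i+2}\to\cdots)$ is acyclic; this is the ``universal property'' content you allude to, and your conclusion is right.
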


\begin{lemma}\label{lemma:tiltingpairsintermedie}
Let $(\cD,\cT)$ be a right $n$-tilting pair of $t$-structures on $D(\cA)$ with $\cD$ the natural $t$-structure. Consider the right basic $t$-structures $\cD_i$, $i=0,..., n$, associated to the pair $(\cD,\cT)$; then the pair $(\cD_i,\cD_{i+j})$ is right $j$-tilting for each $0\leq i\leq n$ and $0\leq j\leq n-i$. Analogous result holds for left $n$-tilting pairs and left/right $n$-cotilting pairs of $t$-structures.
\end{lemma}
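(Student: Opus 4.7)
The plan is to verify directly the two conditions of Definition~\ref{Tiltorpair} for the subpair $(\cD_i,\cD_{i+j})$. First I would establish its right filterability and type. The filterability is an immediate consequence of Lemma~\ref{lemma:inclusioni}(1). For the type, iterating the inclusions of Lemma~\ref{lemma:inclusioni} one obtains the sandwich $\cD_i^{\geq 0}\subseteq \cD_{i+j}^{\geq 0}\subseteq \cD_i^{\geq -j}$, which identifies the shift as $0$ and the gap as at most $j$, exactly the type $(j,0)$ that is needed.

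Next I would tackle the cogeneration condition, which is the real content. By Lemma~\ref{heart-int} applied to $(\cD,\cT)$ of type $(n,0)$ and to the subpair $(\cD_i,\cD_{i+j})$ of type $(j,0)$, we have
\[
\cH_\cD\cap\cH_\cT=\bigcap_{k=0}^{n}\cH_k\;\subseteq\;\bigcap_{k=i}^{i+j}\cH_k=\cH_i\cap\cH_{i+j},
\]
so the cogenerating subcategory of the hypothesis already sits inside the subcategory whose cogenerating property we must prove. For $X\in\cH_i\subseteq D^b(\cA)$, Lemma~\ref{resolutions} then provides a quasi-isomorphism $X\simeq S^\bullet=0\to S^a\to S^{a+1}\to\cdots$ with every term $S^k$ in $\cH_\cD\cap\cH_\cT\subseteq\cH_i\cap\cH_{i+j}$ and with $a=\min\{k:H^k_\cD(X)\neq 0\}\geq -i$.

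The main obstacle is to convert this resolution into a genuine monomorphism in the abelian category $\cH_i$ whose target lies in $\cH_i\cap\cH_{i+j}$, rather than merely a quasi-isomorphism in $D(\cA)$. My approach would be to combine the cogenerating property termwise on $\cD$-cohomology: embed each $H^k_\cD(X)\in\cA$ into an object $E^k\in\cH_\cD\cap\cH_\cT$ in $\cA$, and then assemble these embeddings along the $\cD$-Postnikov tower of $X$, using the iterated HRS description of the intermediate hearts $\cH_{i+1},\dots,\cH_{i+j}$ furnished by Theorem~\ref{teo:genPol1}. At each extension step one must check that the resulting short exact sequence lives in the appropriate intermediate heart and that the cokernel stays inside the torsion class $\cX_k$ of the next HRS step; this is where the chain of inclusions in Lemma~\ref{lemma:inclusioni} and the right filterability of $(\cD,\cT)$ become essential.

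The right $n$-cotilting case is handled symmetrically using the dual part of Lemma~\ref{resolutions} (bounded-above resolutions from a generating subcategory). Finally, the left $n$-tilting and left $n$-cotilting statements reduce to the right ones by passing to the opposite category (Remark~\ref{opposite}): the left basic $t$-structures of $(\cD,\cT)$ are the right basic $t$-structures of $(\cT^\circ,\cD^\circ)$, and cogeneration/generation are interchanged, so no new argument is needed beyond invoking the dual framework developed in Section~\ref{Sec:left}, in particular Theorem~\ref{teo:genPolleft} and Lemma~\ref{heart-intleft}.
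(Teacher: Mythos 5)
Your preliminary steps are fine and agree with the paper: filterability and type $(j,0)$ of the subpair follow from Lemma~\ref{lemma:inclusioni}, and Lemma~\ref{heart-int} gives $\cH_\cD\cap\cH_\cT\subseteq\cH_i\cap\cH_{i+j}$, so the candidate cogenerating class contains a class already known to cogenerate $\cH_\cD$. The problem is that the actual content of the lemma --- producing, for an object $X$ of an intermediate heart, a monomorphism \emph{in that heart} into an object of $\cH_i\cap\cH_{i+j}$ --- is precisely the step you defer, and the plan you sketch for it would fail. Embedding each $H^k_\cD(X)$ separately into some $E^k\in\cH_\cD\cap\cH_\cT$ and ``assembling along the Postnikov tower'' runs into two obstructions. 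First, a family of monomorphisms on the individual $\cD$-cohomologies does not lift to a morphism out of $X$: the connecting maps of the tower obstruct any such assembly, and even when a map exists there is no reason it is monic in the tilted heart. Second, the shifted pieces one would naturally use as building blocks do not lie in $\cH_i\cap\cH_{i+j}$: already for $i=1$, an object $E^{-1}[1]$ with $E^{-1}\in\cH_\cD\cap\cH_\cT\subseteq\cX_0$ belongs to $\cH_1$ only if $E^{-1}\in\cX_0\cap\cY_0=0$, since membership in $\cH_1$ forces $H^{-1}_\cD$ to lie in the torsion-free class $\cY_0$. So the target cannot be built degreewise from embeddings of the cohomologies.

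The paper resolves this with a different, concrete device. It first reduces by induction to showing that $(\cD,\cD_{n-1})$ and $(\cD_1,\cT)$ are right $(n-1)$-tilting; the former is immediate from Lemma~\ref{heart-int}. For the latter, given $X\in\cH_1\subseteq\cD^{[-1,0]}$ it takes the resolution $S^\bullet$ of Lemma~\ref{resolutions} (starting in degree $-1$) and forms the \emph{brutal truncation} $W=(S^{-1}\to S^0)$. Being a cone of a morphism between objects of $\cH_\cT$, $W$ lies in $\cT^{\leq 0}$; and $H^{-1}(W)\cong H^{-1}(X)\in\cY_0$ gives $W\in\cD_1^{\geq 0}$, so $W\in\cH_1\cap\cH_\cT$. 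The natural map $X\to W$ is then shown to be a monomorphism in $\cH_1$ by computing its cone $Z[1]$ and checking that $H^1(Z)$, being a quotient of $S^0\in\cX_0$, lies in $\cX_0$. To repair your argument you should replace the Postnikov-tower assembly by this truncated-resolution construction (and the accompanying cone computation), and organize the general subpair case through the inductive reduction rather than attempting all $(i,j)$ directly, since Lemma~\ref{resolutions} is applied in the ambient category $\cA=\cH_\cD$. Your treatment of the cotilting and left cases by duality is at the same level of detail as the paper's and is acceptable.
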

\begin{proof}
It is sufficient to prove that both $(\cD,\cD_{n-1})$ and $(\cD_1,\cT)$ are right $(n-1)$-tilting pairs of $t$-structures.
By Lemma~\ref{lemma:inclusioni} and Remark~\ref{rem:steps} the pair $(\cD,\cD_{n-1})$ is right filterable of type $(n-1,0)$. Then by Lemma~\ref{heart-int} the full subcategory $\cH_\cD\cap \cH_{n-1}$ is equal to $\bigcap_{\ell=0}^{n-2}\cH_\ell$ which contains 
$\bigcap_{\ell=0}^{n-1}\cH_\ell=\cH_\cD\cap \cH_\cT$; since the latter cogenerates $\cH_\cD$, also $\cH_\cD\cap \cH_{n-1}$ cogenerates $\cH_\cD$.\\
Let us prove that $(\cD_1,\cT)$ is a right $(n-1)$-tilting pair of $t$-structures.
By Lemma~\ref{lemma:inclusioni} $(\cD_1,\cT)$ is right filterable; let us prove that
the subcategory $\cS_1:=\cH_1 \cap \cH_\cT$ cogenerates 
$\cH_1$. Consider an object  $X$ of $\cH_1\subseteq \cD^{[-1,0]}$. 
By Lemma~\ref{resolutions} applied to the full subcategory $\cS:=\cH_\cD\cap\cH_\cT$ of $\cH_\cD$, we may assume that the complex $X$ is represented by:
\[\xymatrix{
X= &\dots \ar[r] & 0\ar[r] & S^{-1}\ar[r]^{d_X^{-1}}& S^0\ar[r]^{d_X^{0}} &S^1\ar[r]^{d_X^{1}}   & \dots \\ }
\] 
with $S^j\in \cS$ and  $j\geq -1$.
Let us define $W$ to be the complex
\[\xymatrix{W:=&\dots \ar[r] & 0\ar[r] & S^{-1}\ar[r]^{d_X^{-1}}& S^0\ar[r] &0\ar[r]  & \dots \\ }
\] 
The following exact sequence of complexes  $0\to Z\to X\to W\to 0$ gives rise to a distinguished triangle in $D(\cA)$: 
\[
\xymatrix{
Z:=\ar[d]^q  &  \dots \ar[r] \ar[d] & 0\ar[r]\ar[d] & 0 \ar[r]\ar[d]& 0\ar[r]  \ar[d]   &  
S^1\ar[r]^{d_X^{1}}\ar[d]^{\rm{id}_{S^{1}}} & S^2\ar[r]^{d_X^{2}}\ar[d]^{\rm{id}_{S^{2}}} & \dots\ar[d] \\
X=\ar[d]^i &  \dots \ar[r] \ar[d]  & 0\ar[r]\ar[d] & 
S^{-1}\ar[r]^{d_X^{-1}}\ar[d]_{\rm{id}_{S^{-1}}} & 
S^0\ar[r]^{d_X^{0}}   \ar[d]^{\rm{id}_{S^{0}}} & S^1\ar[r]^{d_X^{1}}\ar[d] & S^2\ar[r]^{d_X^{2}}\ar[d] & \dots\\
W=&\dots \ar[r] & 0\ar[r]  & S^{-1}\ar[r]^{d_X^{-1}}& S^0\ar[r] &0\ar[r]  & 0\ar[r] & \dots  \\ 
}
\]
To conclude, we shall show that $W\in \cS_1:=\cH_1\cap\cH_\cT
$ and that $i$ is a monomorphism in $\cH_1$.
First notice that $W\in\cD_1^{\geq 0}$ since $H^i(W)=0$ for any $i\leq -2$
while $H^{-1}(W)\cong H^{-1}(X) \in\cY_0$.
Moreover $W\in\cT^{\leq 0}$ since it is the mapping cone of the morphism
$d^{-1}_X:S^{-1}\to S^0$ between objects in $\cH_\cT$.
Therefore $W$ belongs to $\cD_1^{\geq 0}\cap\cT^{\leq 0}$ which is equal to $\cH_1\cap\cH_\cT$ by Lemma~\ref{lemma:inclusioni}.
In order to prove that $i$ is a monomorphism in $\cH_1$ we have to prove that
its mapping cone $Z[1]$ lies in $\cH_1$, i.e., $H^{-1}(Z[1])$ belongs to $\cY_0$,  $H^{0}(Z[1])$ belongs to $\cX_0$, and $H^{i}(Z[1])=0$ for each $i\not=-1,0$. 
The long exact sequence of $\cD$-cohomology of the distinguished triangle $Z\to X \to W \overset{+1}\to$ 
proves that 
\[\xymatrix{ 0\ar[r] &H^0(X)\ar[r] & H^0(W)\ar[r] & H^1(Z)\ar[r] & H^1(X)=0}\text{ and}\]
$H^i(Z)=0$ for all $i\not= 1$; therefore $Z[1]$ lies in $\cH_1$ since $H^0(Z[1])=H^1(Z)\in \cX_0$: indeed it is a quotient of $H^0(W)$ which is a quotient of $S^0\in\cH_\cD\cap\cH_\cT$ which is contained in $\cX_0$ by Lemma~\ref{heart-int}.
\end{proof}

\begin{theorem}\label{genheartderiv}
Let $\cA$ be an abelian category and $\cD$ be the natural $t$-structure in $D(\cA)$.
Suppose that $(\cD,\cT)$ is a 
$n$-tilting pair
(resp. 
$n$-cotilting pair)
of $t$-structures; then there is a triangle equivalence
\[
\xymatrix{D(\cH_\cT) \ar[r]^-\simeq& D(\cH_\cD)=D(\cA)}
\]
which extends the natural inclusion $\cH_\cT \subseteq D(\cA)$.
\end{theorem}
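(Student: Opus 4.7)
The plan is to proceed by induction on $n$, reducing an $n$-tilting pair to a $1$-tilting step on top of an $(n-1)$-tilting pair via Lemma~\ref{lemma:tiltingpairsintermedie}, and then to invoke the Happel--Reiten--Smal\o\ Theorem~\ref{heartderiv} at each induction step. The base case $n=1$ is exactly Theorem~\ref{heartderiv}, while the case $n=0$ gives $\cD=\cT$ and the statement is trivial.

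For the inductive step, assume the assertion holds for $n-1$ and let $(\cD,\cT)$ be a right $n$-tilting pair in $D(\cA)$, with right basic $t$-structures $\cD=\cD_0,\cD_1,\dots,\cD_n=\cT$. By Lemma~\ref{lemma:tiltingpairsintermedie} the pair $(\cD,\cD_1)$ is $1$-tilting and the pair $(\cD_1,\cT)$ is $(n-1)$-tilting. Apply Theorem~\ref{heartderiv} to $(\cD,\cD_1)$ to obtain a triangle equivalence
\[
\Psi\colon D(\cH_1)\xrightarrow{\ \simeq\ } D(\cA)
\]
which extends the natural inclusion $\cH_1\subseteq D(\cA)$; by construction $\Psi$ sends the natural $t$-structure on $D(\cH_1)$ to $\cD_1$.

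Transport the pair $(\cD_1,\cT)$ through $\Psi$: set $\cT':=\Psi^{-1}(\cT)$, a $t$-structure on $D(\cH_1)$ whose heart is identified with $\cH_\cT$ via $\Psi$. Since $\Psi$ is a triangle equivalence sending the natural $t$-structure of $D(\cH_1)$ to $\cD_1$, all the conditions defining an $(n-1)$-tilting pair are preserved: filterability (intersections of aisles remain aisles), type $(n-1,0)$ (the inclusions $\cD_1^{\geq 0}\subseteq\cT^{\geq 0}\subseteq\cD_1^{\geq -(n-1)}$ translate verbatim), and cogeneration of the intersection heart. Thus the pair formed by the natural $t$-structure on $D(\cH_1)$ and $\cT'$ is $(n-1)$-tilting. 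By the inductive hypothesis there is a triangle equivalence
\[
\Phi\colon D(\cH_{\cT'})\xrightarrow{\ \simeq\ } D(\cH_1)
\]
extending $\cH_{\cT'}\subseteq D(\cH_1)$. Composing, $\Psi\circ\Phi\colon D(\cH_\cT)\simeq D(\cA)$ is the required equivalence; it extends $\cH_\cT\subseteq D(\cA)$ because on the heart $\Phi$ acts as the inclusion $\cH_\cT\simeq\cH_{\cT'}\hookrightarrow D(\cH_1)$ and $\Psi$ extends the inclusion $\cH_1\hookrightarrow D(\cA)$, so the composite on $\cH_\cT$ is the original inclusion $\cH_\cT\hookrightarrow D(\cA)$.

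The cotilting case follows by the dual argument using Section~\ref{Sec:left}: start from the left basic chain ${}_0\cD=\cD,{}_1\cD,\dots,{}_n\cD=\cT$, split off a $1$-cotilting step $(\cD,{}_1\cD)$ via Theorem~\ref{heartderiv} and apply induction to the $(n-1)$-cotilting pair $({}_1\cD,\cT)$ pulled back through the resulting equivalence. The main obstacle in carrying out this plan is verifying that the derived equivalence $\Psi$ really preserves the $(n-1)$-tilting property, i.e., that right (or left) filterability and the cogeneration (resp. generation) of $\cH_\cD\cap\cH_\cT$ in $\cH_\cD$ transfer across $\Psi$. This is essentially formal once one notes that $\Psi$ identifies $\cD_1$ with the natural $t$-structure on $D(\cH_1)$, so that the basic hearts of $(\cD_1,\cT)$ correspond bijectively (and as abelian subcategories) to those of the pulled-back pair, preserving all intersections $\cH_i\cap\cH_j$ used in Lemma~\ref{heart-int}.
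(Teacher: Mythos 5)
Your proposal is correct and follows essentially the same route as the paper: induction on the gap $n$, splitting off the $1$-tilting pair $(\cD,\cD_1)$ via Lemma~\ref{lemma:tiltingpairsintermedie}, applying Theorem~\ref{heartderiv} to get the equivalence $D(\cH_1)\simeq D(\cA)$, transporting $(\cD_1,\cT)$ to $D(\cH_1)$ where $\cD_1$ becomes the natural $t$-structure, and invoking the inductive hypothesis before composing. You even make explicit the transport-of-structure verification that the paper leaves implicit.
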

\begin{proof}
We proceed by induction on the gap $n$. 
In both the tilting and the cotilting cases, for $n=0$ there is nothing to prove and for $n=1$ the result is just Theorem~\ref{heartderiv}.
Assume $(\cD,\cT)$ is right filterable;
suppose $n>1$ and that the statement holds for $n-1$. 
Consider the right basic $t$-structures $\cD_i$, $i=0,..., n$, associated to the pair $(\cD,\cT)$.
Let us assume $(\cD,\cT)$ is $n$-tilting; if $(\cD,\cT)$ is $n$-cotilting one concludes simply dualising the sequel. 

By Lemma~\ref{lemma:tiltingpairsintermedie} $(\cD,\cD_1)$ is right $1$-tilting. Therefore
there exists a triangle equivalence $E:D(\cH_1)\overset{\simeq}\to D(\cA)$ extending the inclusion $\cH_1 \subseteq D(\cA)$.
Via the equivalence
$E:D(\cH_1)\overset{\simeq}\to D(\cA)$, $\cD_1$ is the trivial $t$-structure in $\cD(\cH_1)$, and $\cT$ can be regarded as a $t$-structure in $\cD(\cH_1)$.
Since $(\cD_1,\cT)$ is $(n-1)$-tilting, by inductive hypothesis there is a triangle equivalence $D(\cH_\cT)\to D(\cH_1)$ which extends the inclusion of $\cH_\cT$ in $D(\cH_1)$; composing with the triangle equivalence $E:D(\cH_1)\overset{\simeq}\to D(\cA)$ we get a triangle equivalence $D(\cH_\cT)\overset{\simeq}\to D(\cA)$ which extends the inclusion of $\cH_\cT$ in $D(\cA)$.
{\tiny{\[\xymatrix@-1pc{{}\cA\ar@{^(->}[dd]&
\cH_1\ar@{^(->}[dd]\ar@{^(->}[dr]&&
\cH_2
\ar@{^(->}[dr]&&\cdots&
\cH_{n-1}
\ar@{^(->}[dr]&&\cH_{\cT}\ar@{^(->}[dr]
\\
{}&&D(\cH_1)\ar@{<->}[rr]\ar@{<-->}[ld]\ar@{<->}[rd]&&D(\cH_2)\ar@{<->}[r]\ar@{<-->}[ld]\ar@{<->}[rd]&\cdots&\cdots&D(\cH_{n-1})\ar@{<->}[l]\ar@{<-->}[ld]\ar@{<->}[rd]&&D(\cH_{\cT})\ar@{<->}[ll]\ar@{<->}[ld]
\\
D(\cA)\ar@{=}[r]&D(\cA)\ar@{=}[rr]&&D(\cA)\ar@{=}[rr]&&\cdots\ar@{=}[r]&D(\cA)\ar@{=}[rr]&&D(\cA)
}\]}}
If $(\cD,\cT)$ is left filterable, one repeats for both the tilting and cotilting cases analogous arguments, using the associated left basic $t$-structures ${}_i\cD$.
\end{proof}

\section{Applications}
\subsection{Tilting objects in Grothendieck categories}

Along all this section $\cG$ is a fixed Grothendieck category, $T$ is a fixed $n$-tilting object in $\cG$, $\cD$ is the natural $t$-structure in $D(\cG)$ and $\cT_T$ is the $t$-structure compactly generated by $T$ (see Sections III, IV in Preliminaries). By Remark~\ref{rem:tiltingcoaisle} the co-aisle $\cT_T^{\geq 0}$ is closed under direct sums and homotopy colimits.

\begin{lemma}\label{AB5}
The co-aisle $\cD^{\geq 0}$ is closed under
taking homotopy colimits
in $D(\cG)$.
\end{lemma}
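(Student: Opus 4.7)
The plan is to compute $\cD$-cohomology of the homotopy colimit directly from its defining triangle. Fix a sequence $X_0 \xrightarrow{f_0} X_1 \xrightarrow{f_1} X_2 \to \cdots$ in $\cD^{\geq 0}$; by definition $\HoColim X_i$ sits in a distinguished triangle
\[
\bigoplus_{i\in \bN} X_i \xrightarrow{\mathrm{id} - s} \bigoplus_{i\in \bN} X_i \longrightarrow \HoColim X_i \xrightarrow{+1}
\]
where $s$ is induced termwise by the $f_i$. I shall show $H^j(\HoColim X_i)=0$ for all $j<0$.

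The first ingredient is that in the Grothendieck category $\cG$ direct sums are exact (AB4, implied by AB5), and by Remark~\ref{rem:PrelGroth} direct sums in $D(\cG)$ are computed termwise (via $K$-injective resolutions or simply as complexes). Consequently the cohomology functor $H^j$ commutes with direct sums, so
\[
H^j\!\Bigl(\bigoplus_{i\in\bN} X_i\Bigr) \;=\; \bigoplus_{i\in\bN} H^j(X_i) \;=\; 0 \quad\text{for all } j<0,
\]
since each $X_i\in\cD^{\geq 0}$. In particular $\bigoplus_i X_i$ itself lies in $\cD^{\geq 0}$.

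Next I would take the long exact sequence of $\cD$-cohomology associated to the triangle above. For $j<-1$ the flanking terms vanish, giving $H^j(\HoColim X_i)=0$. For $j=-1$ the sequence reduces to
\[
0 \longrightarrow H^{-1}(\HoColim X_i) \longrightarrow \bigoplus_{i\in\bN} H^0(X_i) \xrightarrow{\ \mathrm{id}-H^0(s)\ } \bigoplus_{i\in\bN} H^0(X_i),
\]
so it remains to verify that $\mathrm{id}-H^0(s)$ is a monomorphism in $\cG$. This is the standard telescope argument: any element of $\bigoplus H^0(X_i)$ has finite support, and if $(\mathrm{id}-H^0(s))(a_0,a_1,\dots,a_N,0,\dots)=0$ then projecting onto the components yields $a_0=0$ and $a_i=H^0(f_{i-1})(a_{i-1})$ for $i\geq 1$, forcing all $a_i=0$. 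This argument is valid in any abelian category with countable direct sums and does not require further hypotheses.

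The only real input beyond formal manipulation is the exactness of direct sums in $\cG$ (used to commute $H^j$ with $\bigoplus$); the rest is the long exact sequence and the elementary injectivity of $\mathrm{id}-s$ on a countable direct sum. No serious obstacle is expected.
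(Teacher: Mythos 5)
Your proof is correct and follows essentially the same route as the paper's: both arguments reduce, via the defining triangle of the homotopy colimit and the vanishing of the negative $\cD$-cohomologies of $\bigoplus_i X_i$, to showing that $\id - H^0(s)$ is a monomorphism in $\cG$ (the paper reaches this reduction through a $3\times 3$ diagram of triangles rather than the long exact sequence, but the content is identical). One caveat: your closing claim that the injectivity of $\id - H^0(s)$ holds ``in any abelian category with countable direct sums and does not require further hypotheses'' is false --- the statement is dual to the vanishing of $\varprojlim^1$, which already fails in $\mathrm{Ab}^{\mathrm{op}}$ (an AB4 category with countable coproducts) --- and the ``finite support'' phrasing is not available for a generalized element $\alpha\colon A\to\bigoplus_i H^0(X_i)$, which need not factor through a finite sub-coproduct. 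The correct justification, which is what the paper writes out, is the induction $p_0\circ\alpha=p_0\circ g\circ \alpha=0$ and $p_m\circ g\circ\alpha=p_m\circ\alpha-H^0(f_{m-1})\circ p_{m-1}\circ\alpha$ applied to an arbitrary test morphism $\alpha$ with $g\circ\alpha=0$, combined with the fact that in a Grothendieck category the projections $\bigoplus_i H^0(X_i)\to H^0(X_m)$ are jointly monic; this last point is where AB5, and not merely AB4, is genuinely used.
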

\begin{proof}
We use only the fact that any Grothendieck category admits coproducts and filtered colimits of exact sequences are exact.
Let us consider a sequence
$X_0 \overset{f_0}\to X_1 \overset{f_1}\to X_2 \overset{f_2}\to \cdots
$
whose objects $X_n\in \cD^{\geq 0}$ 
and, denote by $\delta$ the truncation functor associated to $\cD$.
Since a coproduct of distinguished triangles is a distinguished triangle (see Remark~\ref{rem:PrelGroth}), we get the following diagram:
\[\xymatrix{
\bigoplus_{n\in \bN}{\rm H}^0_{\cD}(X_n) \ar[rr]^{\id-\oplus_n{\rm H}^0_{\cD}(f_n)} \ar[d] && \bigoplus_{n\in \bN}{\rm H}^0_{\cD}(X_n) \ar[r] \ar[d] &
\HoColim_{n}({\rm H}^0_{\cD}(X_n))\ar[r]^(0.8){+1}\ar[d] & \\
\bigoplus_{n\in \bN}X_n \ar[rr]^{\id -\oplus_n f_n} \ar[d] && \bigoplus_{n\in \bN}X_n \ar[r] \ar[d] &
\HoColim_{n}(X_n)\ar[r]^(0.8){+1}\ar[d] & \\
\bigoplus_{n\in \bN}\delta^{\geq 1}X_n \ar[rr]^{\id -\oplus_n\delta^{\geq 1}f_n} \ar[d]^{+1}&& \bigoplus_{n\in \bN}\delta^{\geq 1}X_n \ar[r]\ar[d]^{+1} &
\HoColim_{n}(\delta^{\geq 1}X_n)\ar[r]^(0.8){+1}\ar[d]^{+1}& \\
 & & & & \\
}\]
whose rows and columns are distingueshed triangles.
 \\ 
 The homotopy colimit
$\HoColim_n(\delta^{\geq 1}X_n)$ belongs to $\cD^{\geq 0}$ 
since it is the mapping cone of a map between direct sums of objects in $\cD^{\geq 1}$, which belong to $\cD^{\geq 1}$
(as seen in Remark~\ref{rem:PrelGroth}).
Then we have
 $\HoColim_{n}(X_n)\in \cD^{\geq 0}$ if and only if
$\HoColim_{n}({\rm H}^0_{\cD}(X_n))\in \cD^{\geq 0}$. 
Let us prove that $g:=\id -\oplus_n{\rm H}^0_{\cD}(f_n)$ is a monomorphism in $\cG$, obtaining that $\HoColim_{n}({\rm H}^0_{\cD}(X_n))\cong \limind_{n}{\rm H}^0_{\cD}(X_n)\in\cG$ 
and hence $\HoColim_{n}(X_n)\in \cD^{\geq 0}$.
\\
In order to prove that $g$ is a monomorphism we have to prove that if
 $\alpha$ belongs to ${\rm Hom}_{\cG}(A,\oplus_{n\in \bN}{\rm H}^0_{\cD}(X_n))$
and $g\circ \alpha=0$, then $\alpha =0$.
Since
\[{\rm Hom}_{\cG}(A,\oplus_{n\in \bN}{\rm H}^0_{\cD}(X_n))\hookrightarrow 
\prod_{n\in \bN}{\rm Hom}_{\cG}(A,{\rm H}^0_{\cD}(X_n)),\]
denoted by $p_m:\oplus_{n\in \bN}{\rm H}^0_{\cD}(X_n)\to {\rm H}^0_{\cD}(X_m)$ the $m$-th projection, a morphism $\beta\in {\rm Hom}_{\cG}(A,\oplus_{n\in \bN}{\rm H}^0_{\cD}(X_n))$ is zero if and only if $p_m\circ \beta=0$ for each $m\in\mathbb N$.
Now we have:
\[p_0 \circ g=p_0;\qquad p_m\circ g=p_m-{\rm H}^0_{\cD}(f_{m-1})\circ p_{m-1}\quad \forall
m\geq 1.\] 
Then $g\circ\alpha=0$ implies $p_m\circ g\circ \alpha=0$ for any
$m\in\bN$. Let us prove by induction that
$p_m\circ\alpha=0$ for each $m\in\mathbb N$.
First $p_0\circ\alpha=p_0\circ g\circ \alpha=0$;
assume by induction that $p_{m-1}\circ \alpha=0$ with  $m\geq 1$
and let us prove that
$p_m\circ \alpha=0$:
\[0=p_m\circ g\circ \alpha=p_m\circ\alpha-{\rm H}^0_{\cD}(f_{m-1})\circ p_{m-1}\circ\alpha=
p_m \circ\alpha \]
which concludes the proof.
\end{proof}

Since both $\cD$ and $\cT_T$ are closed under taking homotopy colimits in $D(\cG)$, by point (1) of Lemma~\ref{lemma:esempi} the pair $(\cD,\cT_T)$ is a right filterable pair of $t$-structures. More, the following result holds:

\begin{proposition}\label{Ttiltingpair}
The pair $(\cD,\cT_T)$ is right $n$-tilting.
\end{proposition}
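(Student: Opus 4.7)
By Definition~\ref{Tiltorpair} two things must be verified: that $(\cD,\cT_T)$ is right filterable of type $(n,0)$, and that $\cH_\cD\cap\cH_{\cT_T}$ cogenerates $\cH_\cD=\cG$. Right filterability is already in hand: Lemma~\ref{AB5} shows that $\cD^{\geq 0}$ is closed under homotopy colimits in $D(\cG)$, and Remark~\ref{rem:tiltingcoaisle} gives the same closure for $\cT_T^{\geq 0}$, so Lemma~\ref{lemma:esempi}(1) applies.

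To establish the type $(n,0)$ I would check that the double inclusion $\cD^{\geq 0}\subseteq \cT_T^{\geq 0}\subseteq \cD^{\geq -n}$ holds and is sharp. The first inclusion is immediate: for $X\in\cD^{\geq 0}$ and $i<0$ one has $X[i]\in\cD^{\geq 1}$ and $T\in\cG\subseteq\cD^{\leq 0}$, so $\Hom_{D(\cG)}(T,X[i])=0$. Maximality of the shift $k=0$ follows from $T[-1]\in\cT_T^{\leq 1}$ while $H^1(T[-1])=T\neq 0$ shows $T[-1]\notin\cD^{\leq 0}$. For the second inclusion I would use the hyperext spectral sequence $E_2^{p,q}=\Ext^p_\cG(T,H^q X)\Rightarrow H^{p+q}\R\Hom_\cG(T,X)$, which is bounded by (T4). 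Given $X\in\cT_T^{\geq 0}$, consider the truncation triangle $\delta^{\leq -n-1}X\to X\to\delta^{\geq -n}X\stackrel{+1}\to$; the spectral sequence applied to $\delta^{\leq -n-1}X$ has $E_2^{p,q}$ concentrated in $p\in[0,n]$, $q\leq -n-1$, so $p+q\leq -1$, forcing $\R\Hom_\cG(T,\delta^{\leq -n-1}X)\in D^{\leq -1}$, i.e., $\delta^{\leq -n-1}X\in\cT_T^{\leq -1}$. Since $X\in\cT_T^{\geq 0}$, the map $\delta^{\leq -n-1}X\to X$ vanishes by $t$-structure orthogonality, the triangle splits, and $(\delta^{\leq -n-1}X)[1]$ becomes a direct summand of $X$ lying simultaneously in $\cT_T^{\leq -2}$ and $\cT_T^{\geq 0}$, hence zero. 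Thus $\delta^{\leq -n-1}X=0$ and $X\in\cD^{\geq -n}$. Minimality of $n$ follows by (T4): picking $M\in\cG$ with $\Ext^n_\cG(T,M)\neq 0$, one gets $M[n-1]\in\cD^{\leq -(n-1)}$ while $\Hom_{D(\cG)}(T,M[n])=\Ext^n_\cG(T,M)\neq 0$ shows $M[n-1]\notin\cT_T^{\leq 0}$.

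For the cogeneration condition I would use that the triangle equivalence $\R\Hom_\cG(T,-)\colon D(\cG)\xrightarrow{\sim} D(\End_\cG(T))$ sends $\cT_T$ to the natural $t$-structure on the target: an object $M\in\cG$ lies in $\cH_\cD\cap\cH_{\cT_T}$ exactly when $\R\Hom_\cG(T,M)$ is concentrated in degree zero, i.e., $\Ext^i_\cG(T,M)=0$ for all $i>0$. Every injective object of $\cG$ satisfies this, and a Grothendieck category has enough injectives, so any $M\in\cG$ embeds into an object of $\cH_\cD\cap\cH_{\cT_T}$. The main obstacle I expect is the inclusion $\cT_T^{\geq 0}\subseteq \cD^{\geq -n}$: the spectral sequence/truncation argument above is the delicate step, although one can alternatively package it by transporting $\cD$ across the triangle equivalence and comparing it with the natural $t$-structure of $D(\End_\cG(T))$. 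Everything else is essentially formal.
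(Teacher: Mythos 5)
Your overall architecture coincides with the paper's: right filterability is obtained from Lemma~\ref{AB5}, Remark~\ref{rem:tiltingcoaisle} and Lemma~\ref{lemma:esempi}(1); the inclusion $\cD^{\geq 0}\subseteq\cT_T^{\geq 0}$ and the two sharpness statements come from $T\in\cG$, rigidity and axiom (T4); and cogeneration of $\cG$ by $\cG\cap\cH_{\cT_T}$ is proved via injective objects, exactly as in the paper. The one genuine divergence is the inclusion $\cT_T^{\geq 0}\subseteq\cD^{\geq -n}$: the paper does not prove it but quotes Lemma~1.17(2) of \cite{zbMATH06323430}, whereas you attempt a self-contained argument. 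That is a reasonable ambition, but as written it has two problems.

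First, the hyperext spectral sequence $E_2^{p,q}=\Ext^p_\cG(T,H^qY)\Rightarrow\Hom_{D(\cG)}(T,Y[p+q])$ is not automatically convergent when $Y=\delta^{\leq -n-1}X$ is unbounded below. It arises from the tower of truncations $\delta^{\geq -m}Y$ and a priori computes maps into the homotopy limit of that tower, which need not coincide with $Y$ in the derived category of an arbitrary Grothendieck category (left completeness can fail). Saying the $E_2$-page is ``bounded by (T4)'' does not by itself settle convergence; one must either work with the Milnor sequence and check that the relevant towers $\Hom_{D(\cG)}(T,(\delta^{\geq -m}Y)[j])$ stabilize (which does follow from $E_2^{p,q}=0$ for $p\notin[0,n]$), or simply cite Stovicek's lemma as the paper does. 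Second, the splitting step is garbled: from the vanishing of $\delta^{\leq -n-1}X\to X$ you obtain $\delta^{\geq -n}X\cong X\oplus(\delta^{\leq -n-1}X)[1]$, so $(\delta^{\leq -n-1}X)[1]$ is a direct summand of $\delta^{\geq -n}X$, not of $X$, and there is no reason for it to lie in $\cT_T^{\geq 0}$. The correct conclusion is that $(\delta^{\leq -n-1}X)[1]$, being a summand of an object of $\cD^{\geq -n}$ while itself lying in $\cD^{\leq -n-2}$, must vanish. With these two repairs your route goes through and yields a reference-free proof of the key inclusion; as it stands, that inclusion is the one step of the proposition that is not actually established.
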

\begin{proof}
It remains to prove that
\begin{enumerate}
\item $\cG\cap\cH_{\cT_T}$ cogenerates $\cG$;
\item $\cD^{\geq 0}\subseteq \cT_T^{\geq 0}$, $\cD^{\geq 0}\not\subseteq \cT_T^{\geq 1}$,  and $n$ is the minimal natural number such that $\cT_T^{\geq 0}\subseteq \cD^{\geq -n}$.
\end{enumerate}
(1) For any injective object $I$ in $\cG$, one has $\Hom_{D(\cG)}(T, I[i])=0$ for any $i\not= 0$ and hence $I\in\cH_\cT$: indeed
\[ I\in\mathcal T_T^{\leq 0}:=\{X\in D(\cG):\Hom_{D(\cG)}(T, X[i])=0 \text{ for each }i>0\},\]
\[I\in\mathcal T_T^{\geq 0}:=\{X\in D(\cG):\Hom_{D(\cG)}(T, X[i])=0 \text{ for each }i<0\}.\]
Since any 
$X\in \cG$ injects into a suitable injective object $I_X\in\cG$, the intersection $\cG\cap\cH_{\cT_T}$ cogenerates $\cG$.\\
(2) Since $T \in \cG\subseteq \cDll0$, then we get $\cDgg0 \subseteq \cT_T^{\geq 0}$; as $T$ belongs to $\cD^{\geq 0}$, but $T\not\in\cT_T^{\geq 1}$, then $\cD^{\geq 0}\not\subseteq \cT_T^{\geq 1}$.
Let us prove that if $X\in\cT_T^{\geq 0}$ then it belongs to $\cD^{\geq -n}$.
By Lemma 1.17 (2) in \cite{zbMATH06323430}, if $\Hom_{D(\cG)}(T, X[i])=0$ for each $i<0$, then the cohomology objects of $X$ satisfy $H^i_\cD(X)=0$ for all $i<-n$ and so $\cT_T^{\geq 0}\subseteq\cD^{\geq -n}$. Next, since $T$ is $n$-tilting, there exists $M\in\cG$ such that 
\[0\not=\Hom_{D(\cG)}(T, M[n]).\]
Then $M[n-1]\in\cD^{\leq -n+1}$, but $M[n-1]$ does not belong to $\cT_T^{\leq 0}$: therefore $\cD^{\leq -n+1}\not\subseteq\cT_T^{\leq 0}$, or equivalently
$\cT_T^{\geq 0}\not\subseteq\cD^{\geq -n+1}$.
\end{proof}

Denoted by $\cD_i$ and $\cH_i$, $i=0,...,n$, the right basic $t$-structures and hearts of the pair $(\cD,\cT_T)$, by Lemma~\ref{lemma:tiltingpairsintermedie} for each $j>i$
the pairs $(\cD_i,\cD_{j})$ are right $(j-i)$-tilting and therefore by Theorem~\ref{genheartderiv} there are triangle equivalences
\[E_{i,j}:D(\cH_{j})\stackrel{\simeq}\rightarrow D(\cH_i)\]
which extend the natural inclusions $\cH_{j}\subseteq D(\cH_i)$. By construction, for each $0\leq i<j<\ell\leq n$ it is $E_{i,j}\circ E_{j,\ell}=E_{i,\ell}$.

We can say something more: indeed, let us prove that all the right basic hearts $\cH_i$ of the pair $(\cD,\cT_T)$ are Grothendieck categories.

\begin{lemma}\label{lemma:(n-i)tilting!!}
For each $i=0,..., n$ the torsion free class $\cY_i$ coincides with the kernel of the functor $\Hom_{\cH_i}(T,-)$.
\end{lemma}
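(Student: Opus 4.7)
The plan is to reduce the statement to a direct consequence of Lemma~\ref{lemma:inclusioni}(5) combined with the characterization of the co-aisle $\cT_T^{\geq 0}$ in terms of vanishing of $\Hom$ from $T$. The first thing I need to verify is that $T$ actually lies in each heart $\cH_i$, so that the functor $\Hom_{\cH_i}(T,-)$ makes sense. Since $T$ is rigid and compact, one checks immediately that $T \in \cG \cap \cH_{\cT_T}$: indeed $\Hom_{D(\cG)}(T,T[k])=\Ext^k_\cG(T,T)=0$ for all $k \neq 0$. Then by Lemma~\ref{heart-int} applied to the right $n$-tilting pair $(\cD,\cT_T)$ at $d=0$, we have $\cG \cap \cH_{\cT_T}=\bigcap_{j=0}^{n}\cH_j$, so $T \in \cH_i$ for every $i=0,\dots,n$.

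Next, since by Proposition~\ref{Ttiltingpair} the pair $(\cD,\cT_T)$ is of type $(n,0)$, we have $\cT_T^{\leq 0}\subseteq \cD^{\leq 0}$. This is exactly the hypothesis needed to invoke Lemma~\ref{lemma:inclusioni}(5), which yields
\[
\cY_i \;=\; \cT_T^{\geq 1}\cap \cH_i.
\]
Thus the lemma reduces to showing that for $Y \in \cH_i$, one has $Y \in \cT_T^{\geq 1}$ if and only if $\Hom_{D(\cG)}(T,Y)=0$ (using that $\cH_i$ is a full subcategory of $D(\cG)$, so $\Hom_{\cH_i}(T,Y)=\Hom_{D(\cG)}(T,Y)$).

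For the nontrivial direction, I use that $\cH_i \subseteq \cD_i^{\geq 0}=\cD^{\geq -i}\cap \cT_T^{\geq 0}$, so any $Y \in \cH_i$ already lies in $\cT_T^{\geq 0}$. By the description of the co-aisle of a compactly generated $t$-structure (Proposition~\ref{compgen}), this means $\Hom_{D(\cG)}(T,Y[k])=0$ for all $k<0$. Since $\cT_T^{\geq 1}=\cT_T^{\geq 0}[-1]$, the condition $Y \in \cT_T^{\geq 1}$ amounts to $\Hom_{D(\cG)}(T,Y[k])=0$ for all $k\leq 0$; all the $k<0$ cases are automatic, so only the vanishing at $k=0$ remains, which is precisely the hypothesis. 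The reverse implication $\cY_i \subseteq \ker \Hom_{\cH_i}(T,-)$ is immediate from $\cY_i \subseteq \cT_T^{\geq 1}$.

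There is no real obstacle here: the main point is simply to assemble the ingredients in the correct order, recognising that $T$ sits in every intermediate heart (via Lemma~\ref{heart-int}) and that the gap in cohomology between $\cT_T^{\geq 0}$ and $\cT_T^{\geq 1}$ is controlled by a single $\Hom$-group because $T$ itself is the compact generator of $\cT_T$.
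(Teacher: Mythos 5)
Your proof is correct, and for the substantive direction it takes a genuinely different (and shorter) route than the paper. Both arguments begin identically: reduce via Lemma~\ref{lemma:inclusioni}(5) to the identity $\cY_i=\cT_T^{\geq 1}\cap\cH_i$, note that $\Hom_{\cH_i}(T,-)$ agrees with $\Hom_{D(\cG)}(T,-)$ by fullness of the heart, and observe that the inclusion $\cY_i\subseteq\Ker\Hom_{\cH_i}(T,-)$ is immediate. For the converse ($\Hom(T,Y)=0$ with $Y\in\cH_i$ implies $Y\in\cT_T^{\geq 1}$), the paper forms the truncation triangle $H^0_{\cT_T}(Y)\to Y\to\tau^{\geq 1}Y\to{}$, applies $\Hom_{D(\cG)}(T,-)$ to deduce $\Hom(T,H^0_{\cT_T}(Y))=0$, and then invokes the fact that $T$ is a generator of the heart $\cH_{\cT_T}$ to conclude $H^0_{\cT_T}(Y)=0$. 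You instead exploit the explicit orthogonality description of the co-aisle from Section~IV: since $\cH_i\subseteq\cD_i^{\geq 0}\subseteq\cT_T^{\geq 0}$, the conditions $\Hom(T,Y[k])=0$ for $k<0$ hold automatically, and membership in $\cT_T^{\geq 1}=\cT_T^{\geq 0}[-1]$ is then equivalent to the single extra vanishing $\Hom(T,Y)=0$. This is cleaner and avoids the cohomology functor $H^0_{\cT_T}$ altogether; the trade-off is that it leans on $\cT_T^{\geq 0}$ being literally cut out by orthogonality against $T$, whereas the paper's argument only needs $T$ to generate $\cH_{\cT_T}$ and would survive in settings where the co-aisle is not presented that way. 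Your preliminary observation that $T\in\cG\cap\cH_{\cT_T}=\bigcap_j\cH_j$ (so that $\Hom_{\cH_i}(T,-)$ is meaningful) is a point the paper leaves implicit here and only records later, in the proof of Proposition~\ref{prop:Grottutto}; making it explicit is a small improvement.
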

\begin{proof}
By point 5 of Lemma~\ref{lemma:inclusioni}, we have
$\cY_i=\cH_i\cap\cT_T^{\geq 1}$ for $i\geq 0$.
Then, for $i=0$ one gets $\cY_0=\cH_0\cap\cT_T^{\geq 1}=
\Ker\Hom_{\cH_0}(T,-)$. Assume $i>0$;  if $Y$ belongs to $\cT^{\geq 1}_T \cap \cH_i$, then we have
$\Hom_{\cH_i}(T,Y)\cong \Hom_{D(\cG)}(T,Y)=0$
since $T\in\cT_T^{\leq 0}$.
Conversely, let $Y\in\cH_i$ such that $\Hom_{\cH_{i}}(T,Y)=0$; we have to prove that $Y$ belongs to $\cT^{\geq 1}_T$.
Since $\cH_i\subseteq \cT_T^{[0,n-i]}$, we have the following distinguished triangle
$H^0_{\cT_T}(Y)\to Y\to \tau^{\geq 1}Y\buildrel{+1}\over\to$ in $D(\cG)$. Applying the functor $\Hom_{D(\cG)}(T,-)$ we get the exact sequence
\[\xymatrix{
\Hom_{D(\cG)}(T,(\tau^{\geq 1}Y)[-1])\ar[r]& \Hom_{D(\cG)}(T,H^0_{\cT_T}(Y))\ar[r] &
\Hom_{D(\cG)}(T,Y).
}
\]
Since $T$ belongs to $\cT_T^{\leq 0}$ and $(\tau^{\geq 1}Y)[-1]$ belongs to $\cT_T^{\geq 1}[-1]=\cT_T^{\geq 2}$ we have $\Hom_{D(\cG)}(T,(\tau^{\geq 1}Y)[-1])=0$; in view of $\Hom_{D(\cG)}(T,Y)=\Hom_{D(\cH_i)}(T,Y)=0$, we have $0=\Hom_{D(\cG)}(T,H^0_{\cT_T}(Y))=\Hom_{\cH_{\cT_T}}(T,H^0_{\cT_T}(Y))$. Since $T$ is a
generator of $\cH_{\cT_T}$, we get $H^0_{\cT_T}(Y)=0$ and hence
$Y\cong \tau^{\geq 1}Y\in\cT_T^{\geq 1}\cap\cH_i$.
\end{proof}

\begin{proposition}\label{prop:Grottutto}
For $i=0,\dots,n$, the right basic heart $\cH_i$ of the pair $(\cD,\cT_T)$ is a Grothendieck category, and $T$ is a $(n-i)$-tilting object in $\cH_i$.
\end{proposition}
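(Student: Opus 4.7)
The plan is to proceed by induction on the gap $n$ of the pair $(\cD, \cT_T)$. For $n = 0$ we have $\cD = \cT_T$, so $\cH_0 = \cG$ is Grothendieck by hypothesis and $T$ is $0$-tilting in $\cG$ for free. For $n \geq 1$ I would first establish the case $i = 1$, namely that $\cH_1$ is Grothendieck and $T$ is $(n-1)$-tilting in $\cH_1$; the remaining cases $i \geq 2$ would then follow by applying the inductive hypothesis to the pair $(\cD_1, \cT_T)$. Indeed, Lemma~\ref{lemma:tiltingpairsintermedie} tells us this pair is right $(n-1)$-tilting, and the triangle equivalence $E_{0,1} : D(\cH_1) \xrightarrow{\simeq} D(\cG)$ of Theorem~\ref{genheartderiv} identifies $(\cD_1, \cT_T)$ with the pair consisting of the natural $t$-structure on $D(\cH_1)$ and the $t$-structure on $D(\cH_1)$ compactly generated by $T$. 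The right basic hearts of this transported pair are $\cH_1, \cH_2, \dots, \cH_n$, so the induction covers $i = 2, \dots, n$.

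To prove the case $i = 1$, observe that by Lemma~\ref{lemma:inclusioni}, $\cH_1$ is the HRS tilt of $\cG$ with respect to the torsion pair $(\cX_0, \cY_0)$. Two features of this torsion pair make the tilt well-behaved. First, $\cX_0$ cogenerates $\cG$: by Lemma~\ref{heart-int} one has $\cG \cap \cH_{\cT_T} \subseteq \cX_0$, and the proof of Proposition~\ref{Ttiltingpair} shows that $\cG \cap \cH_{\cT_T}$ contains every injective of $\cG$, hence cogenerates $\cG$. Second, $\cY_0$ is closed under direct limits: Lemma~\ref{lemma:(n-i)tilting!!} (applied with $i = 0$) gives $\cY_0 = \Ker \Hom_\cG(T, -)$, and compactness of $T$ in $D(\cG)$ together with exactness of filtered colimits in $\cG$ forces $\Hom_\cG(T, -)$ to commute with filtered colimits in $\cG$. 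These two properties are exactly what is needed for the standard criterion guaranteeing that the HRS tilt of a Grothendieck category at a tilting torsion pair whose torsion-free class is closed under direct limits is again Grothendieck.

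To verify that $T$ is an $(n-1)$-tilting object of $\cH_1$: membership $T \in \cH_1$ follows from Lemma~\ref{heart-int}, since $T \in \cH_0 \cap \cH_n$ sits in every basic heart. Compactness, rigidity, and the generator property of $T$ in $D(\cH_1)$ all transport along the equivalence $E_{0,1}$ from the corresponding properties of $T$ in $D(\cG)$. Finally, property (T4), namely $\Ext^{n-1}_{\cH_1}(T, -) \neq 0$ and $\Ext^n_{\cH_1}(T, -) = 0$, is obtained by transporting the pair $(\cD_1, \cT_T)$ of type $(n-1, 0)$ to $D(\cH_1)$ via $E_{0,1}$ and repeating verbatim the Ext-cohomology argument used in the proof of Proposition~\ref{Ttiltingpair}.

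\textbf{Main obstacle.} The delicate step is the Grothendieckness of $\cH_1$: a general HRS tilt of a Grothendieck category need not remain Grothendieck, and it is essential that compactness of $T$ in $D(\cG)$ translates into closure of $\cY_0 = \Ker \Hom_\cG(T, -)$ under direct limits, exactly the hypothesis required by the tilting criterion for Grothendieckness of an HRS tilt.
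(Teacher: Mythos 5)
Your proposal follows essentially the same route as the paper: an inductive argument whose engine is the Parra--Saor\'{\i}n criterion (the tilt of a Grothendieck category at a tilting torsion pair is Grothendieck if and only if the torsion-free class is closed under direct limits), the identification $\cY_i=\Ker\Hom_{\cH_i}(T,-)$ from Lemma~\ref{lemma:(n-i)tilting!!}, and transport of the tilting axioms (T1)--(T4) along the equivalences of Theorem~\ref{genheartderiv}. The only structural difference is that you induct on the gap $n$ after settling the case $i=1$, whereas the paper inducts on $i$ directly; since your induction step reuses the same one-step argument inside each heart, the two organisations are interchangeable.

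The one step you should not wave through is the closure of $\cY_0$ (and, in the unfolded induction, of each $\cY_i$) under direct limits. Compactness of $T$ in $D(\cG)$ says that $\Hom_{D(\cG)}(T,-)$ commutes with coproducts, and exactness of filtered colimits does not formally upgrade this to commutation of $\Hom_{\cG}(T,-)$ with arbitrary filtered colimits: Lemma~\ref{neeman} controls sequential homotopy colimits, but a general directed colimit is only the cokernel of a (typically non-injective) map between coproducts, so the comparison requires a genuine argument. The paper does not attempt this directly; it invokes Theorems 6.7 and 6.8 of \cite{zbMATH06323430}, which assert precisely that $\Hom_{\cH_i}(T,-)$ preserves direct limits when $T$ is an $(n-i)$-tilting object of the Grothendieck category $\cH_i$. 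With that citation (or an equivalent argument) supplied, your proof is complete.
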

\begin{proof}
We proceed by induction. For $i=0$ we have $\cH_0=\cG$ which is Grothendieck and $T$ is a $n$-tilting object in $\cG$.
Assume $i\geq 0$, $\cH_{i}$ is a Grothendieck category and $T$ is a $(n-i)$-tilting object in $\cH_i$.
By Proposition~\ref{Ttiltingpair} and Lemma~\ref{lemma:tiltingpairsintermedie},
the pair $(\cD_i,\cD_{i+1})$ is 1-tilting.
Therefore $\cD_{i+1}$ is obtained by tilting $\cD_i$ with respect to the tilting torsion pair
$(\cX_i,\cY_i):=(\cD_{i+1}^{\leq 0}\cap\cH_i,\cD_{i+1}^{\geq 1}\cap\cH_i)$. 
By \cite[Corollary~4.10]{parra2013direct}
the heart $\cH_{i+1}$ is Grothendieck if and only if 
$\cY_i
=\cD_i^{\leq 0}\cap\cD_{i+1}^{\geq 1}$ is closed under taking direct limits in $\cH_i$.
Since $T$ is a $(n-i)$-tilting object in $\cH_{i}$, by Theorems 6.8 and 6.7 in \cite{zbMATH06323430} the functor $\Hom_{\cH_{i}}(T,-)$ preserves direct limits. Since $\cY_{i}=\Ker\Hom_{\cH_{i}}(T,-)$ by 
Lemma~\ref{lemma:(n-i)tilting!!}, we get that $\cY_{i}$ is closed under direct limits in $\cH_{i}$, and therefore $\cH_{i+1}$ is Grothendieck.\\
Let us prove that $T$ is a $(n-i-1)$-tilting object in $\cH_{i+1}$. First of all $T$ belongs to $\cG\cap\cH_{\cT_T}\subseteq \cH_{i+1}$. 
By Lemma~\ref{lemma:tiltingpairsintermedie}
the pair $(\cD,\cD_{i+1})$ is a pair of right $(i+1)$-tilting $t$-structures, so 
by Theorem~\ref{genheartderiv} there is a triangulated equivalence
$E_{0,i+1}:D(\cH_{i+1})\to D(\cG)$ which extends the natural inclusion $\cH_{i+1} \subseteq D(\cG)$. Since $T$ is a fixed point for the equivalence $E_{0,i+1}:D(\cH_{i+1})\to D(\cG)$, it is a compact generator in $D(\cH_{i+1})$ and $\Hom_{D(\cH_{i+1})}(T,T[j])\cong\Hom_{D(\cG)}(T,T[j])=0$ for each $j>0$. Since $(\cD_{i+1},\cT_T)$ is of type $(n-i-1,0)$, by point 2 of Remark~\ref{rem:steps} we have that $i$ is the maximum natural number such that $\cH_{i+1}\subseteq \cT_T^{[0,n-i-1]}$ and hence $\Hom_{D(\cH_{i+1})}(T, L[n-i])\cong\Ext^{n-i}_{\cH_{i+1}}(T,L)=0$ for each $L\in\cH_{i+1}$ and $\Ext^{n-i-1}_{\cH_{i+1}}(T,-)\not\equiv 0$.
\end{proof}

Since the right basic hearts $\cH_i$ are Grothendieck and $T$ is a $(n-i)$-tilting object in $\cH_i$, we have for $i=0,...,n$ the triangle equivalences
\[\R\Hom_{\cH_i}(T,-):D(\cH_i)\to D(\End_{\cH_i}(T))=D(\End_{\cG}(T)).\]

\begin{lemma}\label{lemma:senzator}
For each $i=0,..., n$ we have
\[\R\Hom_{\mathcal G}(T,-)\circ E_{0,i}=\R\Hom_{\cH_i}(T,-);\] 
\end{lemma}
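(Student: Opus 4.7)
The plan is to exhibit a natural quasi-isomorphism of triangle functors $\R\Hom_{\cH_i}(T,-)\xrightarrow{\sim}\R\Hom_\cG(T,-)\circ E_{0,i}$ from $D(\cH_i)$ to $D(\End_\cG(T))$, obtained from the functoriality of the triangle equivalence $E_{0,i}$. Two preliminary facts will be collected first. Since $T\in\cG\cap\cH_{\cT_T}\subseteq\cH_i$ (as observed in the proof of Proposition~\ref{prop:Grottutto}), and since $E_{0,i}$ extends the natural inclusion $\cH_i\subseteq D(\cG)$, the equivalence $E_{0,i}$ fixes $T$, i.e.\ $E_{0,i}(T)=T$. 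Moreover, the fully faithfulness of $E_{0,i}$ provides the canonical identification of endomorphism rings
\[
\End_{\cH_i}(T)=\Hom_{D(\cH_i)}(T,T)\xrightarrow[\cong]{E_{0,i}}\Hom_{D(\cG)}(T,T)=\End_\cG(T),
\]
so the two target categories $D(\End_{\cH_i}(T))$ and $D(\End_\cG(T))$ coincide.

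Next, I will construct the comparison map. Functoriality of the triangle equivalence $E_{0,i}$ provides, for every $X\in D(\cH_i)$, a natural morphism
\[
\phi_X\colon\R\Hom_{\cH_i}(T,X)\longrightarrow\R\Hom_\cG(E_{0,i}(T),E_{0,i}(X))=\R\Hom_\cG(T,E_{0,i}(X)).
\]
Passing to cohomology, $\phi_X$ realizes the isomorphisms $\Hom_{D(\cH_i)}(T,X[j])\xrightarrow{\sim}\Hom_{D(\cG)}(T,E_{0,i}(X)[j])$, $j\in\mathbb{Z}$, coming from the fully faithfulness of $E_{0,i}$, so $\phi_X$ is a quasi-isomorphism of complexes of abelian groups.

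The main obstacle will be to promote $\phi_X$ to a quasi-isomorphism in $D(\End_\cG(T))$: one must verify that $\phi_X$ is equivariant for the natural left action of $\End_\cG(T)=\End_{\cH_i}(T)$ by precomposition on $T$. This equivariance follows formally from the fact that $E_{0,i}$, being a functor, commutes with precomposition by endomorphisms of $T$, together with the identification of the two endomorphism rings recalled above. A cleaner alternative, avoiding such bookkeeping, is to argue by induction on $i$: the case $i=0$ is trivial since $E_{0,0}=\id_{D(\cG)}$; for $i\geq 1$, writing $E_{0,i}=E_{0,i-1}\circ E_{i-1,i}$ and applying the inductive hypothesis reduces the statement to the $1$-tilting pair $(\cD_{i-1},\cD_i)$, where $E_{i-1,i}$ is the explicit Happel-Reiten-Smal\o{} triangle equivalence of Theorem~\ref{heartderiv} and the desired compatibility with $\R\Hom(T,-)$ can be checked directly from its construction.
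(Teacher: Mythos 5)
There is a genuine gap in your main construction. A triangle equivalence $E_{0,i}\colon D(\cH_i)\to D(\cG)$ gives isomorphisms of \emph{Hom-groups} $\Hom_{D(\cH_i)}(T,X[j])\to\Hom_{D(\cG)}(T,E_{0,i}(X)[j])$, but it does \emph{not} automatically give a morphism of \emph{Hom-complexes} $\phi_X\colon\R\Hom_{\cH_i}(T,X)\to\R\Hom_\cG(T,E_{0,i}(X))$ in $D(\End_\cG(T))$, nor even in $D(\mathbb Z)$: triangulated functors act on morphism groups, not on the $\R\Hom$ objects, and knowing that two complexes have isomorphic cohomology in each degree does not produce a quasi-isomorphism between them. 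So the very existence of $\phi_X$ as a (natural) map of complexes is the crux of the lemma, and "functoriality of the triangle equivalence" does not supply it. Your fallback route by induction on $i$ correctly reduces to the $1$-tilting step, but there you again defer the whole content to "can be checked directly from its construction" of the HRS equivalence, which is exactly what needs an argument.

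The paper's proof closes this gap by a resolution argument: $\cH_i$ is a Grothendieck category (Proposition~\ref{prop:Grottutto}), so $\R\Hom_{\cH_i}(T,-)$ is computed via K-injective resolutions in $\cH_i$, and one checks that every injective object $I$ of $\cH_i$ satisfies $\Hom_{D(\cG)}(T,I[j])=0$ for $j\neq 0$, i.e.\ $\cI_{\cH_i}\subseteq\cH_i\cap\cH_{\cT_T}$. Hence the terms of a K-injective resolution over $\cH_i$ are already acyclic for $\Hom_{D(\cG)}(T,-)$, so the \emph{same} resolution computes both $\R\Hom_{\cH_i}(T,-)$ and $\R\Hom_\cG(T,-)\circ E_{0,i}$ termwise, and on each term both reduce to the plain $\Hom$, which is $\End_\cG(T)$-linearly the same on both sides. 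If you want to salvage your approach, you should replace the appeal to "functoriality" by this kind of explicit computation on a class of objects adapted to both derived functors.
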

\begin{proof}
By Proposition~\ref{prop:Grottutto}
 $\cH_i$ is a Grothendieck category and so it has enough 
injective objects and we can use the injective model structure to compute the
derived functor $\R\Hom_{\cH_i}(T,-)$.
Let us denote by $\cI_{\cH_i}$ the full subcategory of injective objects in $\cH_i$.
It is sufficient to prove that given $I\in\cI_{\cH_i}$ we have
$\R\Hom_{\mathcal G}(T,I)=\R\Hom_{\cH_i}(T,I)$. 
Since $0=\Hom_{D(\cH_i)}(T,I[j])\cong \Hom_{D(\cG)}(T,I[j])$ for any $j\not= 0$
we have $\cI_{\cH_i}\subseteq \cH_i\cap\cH_\cT$. Since for any
$I\in\cI_{\cH_i}$ both $T, I\in\cH_i\cap\cH_\cT$, we obtain:
\[
\R\Hom_{\mathcal G}(T,I)\cong H_{\End_\cG(T)}^0\R\Hom_{\mathcal G}(T,I)=
\Hom_{D(\cG)}(T,I)\cong\]
\[\cong \Hom_{D(\cH_i)}(T,I)=\Hom_{\cH_i}(T,I)=\R\Hom_{\cH_i}(T,I).
\]
\end{proof}
Considering also the equivalence $\Hom_{\cH_{\cT_T}}(T,-)\colon \cH_{\cT_T} \to \End_\cG(T)\lMod$ proved in \cite[Ch.3 Corollary 4.2]{MR2327478}, we get the following commutative diagram where all dotted arrows are triangulated equivalences and the dashed arrow is a Morita equivalence:
{\tiny{\[\xymatrix@-1pc{{}\cG\ar@{_(->}[dd]&
\cH_1\ar@{_(->}[dd]\ar@{_(->}[dr]&&&
\cdots&
\cH_{\cT}\ar@{_(->}[dd]
\ar@{_(->}[dr]\ar@{=}[rr]&&\cH_{\cT}\ar@{^(->}[dl]\ar@{-->}[ddd]|{\Hom_{\cH_\cT}(T,-){\phantom{A}}}^[@!-90]
{\text{Morita equiv.}}
\\
{}&&D(\cH_1)\ar@{.>}[ddl]|(.75){{\phantom{AAAAAAAAA}}\R\Hom_{\cH_1}(T,-)}
\ar@{.>}[ld]|{E_{0,1}}\ar@{.>}[rrd]|{E_{0,1}}&&\cdots\ar@{.>}[ll]_{E_{1,2}}&&
D(\cH_{\cT})\ar@{.>}[ddl]|(.75){{\phantom{AAAAAAAAA}}\R\Hom_{\cH_\cT}(T,-)}
\ar@{.>}[ll]|!{[ul];[ld]}\hole_{{\phantom{AAAAA}}E_{n-1,n}}\ar@{.>}[ld]|{E_{0,n}}
\\
D(\cG)\ar@{=}[r]&D(\cG)\ar@{=}[rrr]|!{[ur];[d]}\hole
&&&\cdots\ar@{=}[r]&D(\cG)\\
D(\End_\cG(T))\ar@{<.}[u]|{\R\Hom_\cG(T,-){\phantom{AAAA}}}
\ar@{=}[r]&D(\End_\cG(T))\ar@{<.}[u]|{\R\Hom_\cG(T,-){\phantom{AAAAAAA}}}\ar@{=}[rrr]&&&\cdots\ar@{=}[r]&D(\End_\cG(T))\ar@{<.}[u]|{\R\Hom_\cG(T,-){\phantom{AAAAAAA}}}&&\End_\cG(T)\lMod\ar@{_(->}[ll]
}\]}}

\subsection{The $t$-tree associated to a $n$-tilting module}
For the rest of this section $\cG=R\lMod$ with $R$ an arbitrary associative ring and therefore $T={}_RT$ is a $n$-tilting left $R$-module. Miyashita in \cite{Miya} introduced the following $n+1$ full subcategories:
\[KE_{e}(T):=\{M\in R\lMod:\Ext^i_{R}(T,M)=0,  \text{ if }i\not=e\},\text{ and}\]
\[KT_{e}(T):=\{N\in S\lMod:\Tor_i^{S}(T,N)=0, \text{ if }i\not=e\},
\quad e=0,1,...,n,\]
and proved that the functors  $\Ext^e_{R}(T,-)$ and $\Tor_e^S(T,-)$ induce inverse equivalences between $KE_{e}(T)$ and $KT_{e}(T)$.
Following Definition~\ref{def:static}, the objects in $KE_{e}(T)$ are exactly the objects in $R\lMod$ which are $\cT_T$-static of degree $e$.

If $n\leq 1$, Brenner and Butler in \cite{zbMATH03697330} observed that $(KE_{0}(T), KE_{1}(T))$ is a torsion pair in $R\lMod$. In particular any object in $R\lMod$ is an extension of a $\cT_T$-static module of degree 1 by a $\cT_T$-static module of degree 0.

As soon as $n>1$, we loose the 
possibility to \emph{decompose} all the objects in $R\lMod$ in  $\cT_T$-static modules: in \cite{zbMATH01958651} examples of simple non $\cT_T$-static
modules are provided. We recover the decomposition of all left $R$-modules in $\cT_T$-static objects with the construction of their $t$-trees (see Section~\ref{Sec:ttree}): indeed, as we have seen in  Remark~\ref{rem:ttree}, the $t$-leaves are $\cT_T$-static. In the case $n=1$ the $t$-tree of a module coincide with its decomposition with respect to the torsion pair $(KE_{0}(T), KE_{1}(T))$.
Therefore we can regard 
the construction of the $t$-tree as a generalization of the Brenner and Butler Theorem.

\begin{example}
In this example, $k$ denotes an algebraically closed field. We will consider a finite-dimensional path  $k$-algebra given by a quiver with relations. If $\ell$, $m$ and $n$ are vertices of the quiver, we continue to denote by $\ell$, $m$ and $n$ the correspondent simple module; $\begin{smallmatrix} \ell\\ n\end{smallmatrix}$ denotes the indecomposable module whose radical (and also socle) is the simple module $n$ and whose top is the simple module $\ell$, while $\begin{smallmatrix} \ell&&m\\&n\end{smallmatrix}$ denotes the indecomposable module whose radical (and also socle) is the simple module $n$ and whose top is the direct sum $\ell\oplus m$.
Let $R$ denote the path $k$-algebra given by the quiver
	\[
\xymatrix{
{\cdot}^{1}\ar[r]&{\cdot}^{2}\ar[r]&{\cdot}^{3}\ar[r]&{\cdot}^{4}\ar[r]&{\cdot}^{5}&{\cdot}^{6}\ar[l]}
\]
with relations such that the left projective modules are $\begin{smallmatrix} 1\\2\end{smallmatrix}$,
$\begin{smallmatrix} 2\\3\end{smallmatrix}$, $\begin{smallmatrix} 3\\4\end{smallmatrix}$,
$\begin{smallmatrix} 4\\5\end{smallmatrix}$, $\begin{smallmatrix} 6\\5\end{smallmatrix}$, $\begin{smallmatrix} 5\end{smallmatrix}$. Let $_RT$ be the left $R$-module 
\[_RT:=\begin{smallmatrix} 4&&6\\&5\end{smallmatrix}\oplus \begin{smallmatrix} 6\end{smallmatrix}
\oplus \begin{smallmatrix} 3\\4\end{smallmatrix}\oplus \begin{smallmatrix} 2\\3\end{smallmatrix}\oplus \begin{smallmatrix} 2\end{smallmatrix}\oplus \begin{smallmatrix} 1\\ 2\end{smallmatrix}.\]
The module $_RT$ is a classical 3-tilting object in $R\lMod$.
It is not difficult to verify that, denoted by $\Ind_R$ the subcategory of indecomposable modules in $R\lMod$, we have
\[\Ind_R=\{\begin{smallmatrix} 1\end{smallmatrix}, \begin{smallmatrix} 2\end{smallmatrix}, \begin{smallmatrix} 3\end{smallmatrix}, \begin{smallmatrix} 4\end{smallmatrix}, \begin{smallmatrix} 5\end{smallmatrix}, \begin{smallmatrix} 6\end{smallmatrix}, \begin{smallmatrix} 1\\2\end{smallmatrix}, \begin{smallmatrix} 2\\3\end{smallmatrix}, \begin{smallmatrix} 3\\4\end{smallmatrix}, \begin{smallmatrix} 4\\5\end{smallmatrix}, \begin{smallmatrix} 6\\5\end{smallmatrix}, \begin{smallmatrix} 4&&6\\&5\end{smallmatrix}\}.\]
If $L$, $M$ and $N$ are left $R$-modules, we will denote by $L\to \stackrel{\point}M\to N$ the bounded complex with $M$ in degree zero.
The derived category $D(R)$ has a finite number of indecomposable complexes, which are up to shifts
\[\{\begin{smallmatrix} 1\end{smallmatrix}, \begin{smallmatrix} 2\end{smallmatrix}, \begin{smallmatrix} 3\end{smallmatrix}, \begin{smallmatrix} 4\end{smallmatrix}, \begin{smallmatrix} 5\end{smallmatrix}, \begin{smallmatrix} 6\end{smallmatrix}, \begin{smallmatrix} 1\\2\end{smallmatrix}, \begin{smallmatrix} 2\\3\end{smallmatrix}, \begin{smallmatrix} 3\\4\end{smallmatrix}, \begin{smallmatrix} 4\\5\end{smallmatrix}, \begin{smallmatrix} 6\\5\end{smallmatrix}, \begin{smallmatrix} 4&&6\\&5\end{smallmatrix},\begin{smallmatrix} 2\\3\end{smallmatrix}\to\stackrel{\point}{\begin{smallmatrix} 1\\2\end{smallmatrix}}, \begin{smallmatrix} 3\\4\end{smallmatrix}\to\stackrel{\point}{\begin{smallmatrix} 2\\3\end{smallmatrix}}, \begin{smallmatrix} 4\\5\end{smallmatrix}\to\stackrel{\point}{\begin{smallmatrix} 3\\4\end{smallmatrix}}, 
\begin{smallmatrix} 4&&6\\&5\end{smallmatrix}\to \stackrel{\point}{\begin{smallmatrix} 3\\4\end{smallmatrix}}, \]
\[
\begin{smallmatrix} 3\\4\end{smallmatrix}\to\begin{smallmatrix} 2\\3\end{smallmatrix}\to \stackrel{\point}{\begin{smallmatrix} 1\\2\end{smallmatrix}}, \begin{smallmatrix} 4\\5\end{smallmatrix}\to\begin{smallmatrix} 3\\4\end{smallmatrix}\to\stackrel{\point}{\begin{smallmatrix} 2\\3\end{smallmatrix}},
\begin{smallmatrix} 4&&6\\&5\end{smallmatrix}\to\begin{smallmatrix} 3\\4\end{smallmatrix}\to\stackrel{\point}{\begin{smallmatrix} 2\\3\end{smallmatrix}}, \begin{smallmatrix} 4\\5\end{smallmatrix}\to\begin{smallmatrix} 3\\4\end{smallmatrix}\to\begin{smallmatrix} 2\\3\end{smallmatrix}\to\stackrel{\point}{\begin{smallmatrix} 1\\2\end{smallmatrix}}, \begin{smallmatrix} 4&&6\\&5\end{smallmatrix}\to\begin{smallmatrix} 3\\4\end{smallmatrix}\to\begin{smallmatrix} 2\\3\end{smallmatrix}\to\stackrel{\point}{\begin{smallmatrix} 1\\2\end{smallmatrix}}
\}\]
Denoted by $\cD$ the natural $t$-structure in $D(R)$, we have proved that $(\cD,\cT_T)$ is a right 3-tilting pair of $t$-structures, in particular it is right filterable. Therefore for each object in $\cH_\cD=R\lMod$ we can construct its right $t$-tree.\\
Some computation permits to obtain the indecomposable complexes belonging to the right basic hearts $R\lMod=\cH_0$, $\cH_1$, $\cH_2$ and $\cH_3=\cT_T$ associated to the pair $(\cD,\cT_T)$ of $t$-structures:
\[\cH_0=R\lMod=\{\begin{smallmatrix} 1\end{smallmatrix}, \begin{smallmatrix} 2\end{smallmatrix}, \begin{smallmatrix} 3\end{smallmatrix}, \begin{smallmatrix} 4\end{smallmatrix}, \begin{smallmatrix} 5\end{smallmatrix}, \begin{smallmatrix} 6\end{smallmatrix}, \begin{smallmatrix} 1\\2\end{smallmatrix}, \begin{smallmatrix} 2\\3\end{smallmatrix}, \begin{smallmatrix} 3\\4\end{smallmatrix}, \begin{smallmatrix} 4\\5\end{smallmatrix}, \begin{smallmatrix} 6\\5\end{smallmatrix}, \begin{smallmatrix} 4&&6\\&5\end{smallmatrix}\}\]
\[\cH_1=\{\begin{smallmatrix} 1\end{smallmatrix}, \begin{smallmatrix} 2\end{smallmatrix}, \begin{smallmatrix} 3\end{smallmatrix}, \begin{smallmatrix} 4\end{smallmatrix},\begin{smallmatrix} 5\end{smallmatrix}\!\!\begin{smallmatrix} [1]\end{smallmatrix},  \begin{smallmatrix} 6\end{smallmatrix}, \begin{smallmatrix} 1\\2\end{smallmatrix}, \begin{smallmatrix} 2\\3\end{smallmatrix}, \begin{smallmatrix} 3\\4\end{smallmatrix}, \begin{smallmatrix} 4\\5\end{smallmatrix}\!\!\begin{smallmatrix} [1]\end{smallmatrix}, \begin{smallmatrix} 6\\5\end{smallmatrix}\!\!\begin{smallmatrix} [1]\end{smallmatrix}, \begin{smallmatrix} 4&&6\\&5\end{smallmatrix}, \begin{smallmatrix} 4\\5\end{smallmatrix}\to\stackrel{\point}{\begin{smallmatrix} 3\\4\end{smallmatrix}}, \begin{smallmatrix} 4&&6\\&5\end{smallmatrix}\to\stackrel{\point}{\begin{smallmatrix} 3\\4\end{smallmatrix}}\}
\]
\[\cH_2=\{\begin{smallmatrix} 1\end{smallmatrix}, \begin{smallmatrix} 2\end{smallmatrix}, \begin{smallmatrix} 6\end{smallmatrix}, \begin{smallmatrix} 1\\2\end{smallmatrix}, \begin{smallmatrix} 2\\3\end{smallmatrix}, \begin{smallmatrix} 3\\4\end{smallmatrix}, \begin{smallmatrix} 4\\5\end{smallmatrix}\!\!\begin{smallmatrix} [1]\end{smallmatrix}, \begin{smallmatrix} 6\\5\end{smallmatrix}\!\!\begin{smallmatrix} [2]\end{smallmatrix}, \begin{smallmatrix} 4&&6\\&5\end{smallmatrix}, \begin{smallmatrix} 4\\5\end{smallmatrix}\to\stackrel{\point}{\begin{smallmatrix} 3\\4\end{smallmatrix}}, \begin{smallmatrix} 4&&6\\&5\end{smallmatrix}\to\stackrel{\point}{\begin{smallmatrix} 3\\4\end{smallmatrix}},
\begin{smallmatrix} 4&&6\\&5\end{smallmatrix}\to\begin{smallmatrix} 3\\4\end{smallmatrix}\to\stackrel{\point}{\begin{smallmatrix} 2\\3\end{smallmatrix}}\}
\]
\[\cH_3=\{\begin{smallmatrix} 1\end{smallmatrix}, \begin{smallmatrix} 2\end{smallmatrix}, \begin{smallmatrix} 6\end{smallmatrix}, \begin{smallmatrix} 1\\2\end{smallmatrix}, \begin{smallmatrix} 2\\3\end{smallmatrix}, \begin{smallmatrix} 3\\4\end{smallmatrix}, \begin{smallmatrix} 4\\5\end{smallmatrix}\!\!\begin{smallmatrix} [1]\end{smallmatrix}, \begin{smallmatrix} 6\\5\end{smallmatrix}\!\!\begin{smallmatrix} [3]\end{smallmatrix}, \begin{smallmatrix} 4&&6\\&5\end{smallmatrix}, \begin{smallmatrix} 4\\5\end{smallmatrix}\to\stackrel{\point}{\begin{smallmatrix} 3\\4\end{smallmatrix}}, \begin{smallmatrix} 4&&6\\&5\end{smallmatrix}\to\stackrel{\point}{\begin{smallmatrix} 3\\4\end{smallmatrix}},\]
\[
\begin{smallmatrix} 4&&6\\&5\end{smallmatrix}\to\begin{smallmatrix} 3\\4\end{smallmatrix}\to\stackrel{\point}{\begin{smallmatrix} 2\\3\end{smallmatrix}},
\begin{smallmatrix} 4&&6\\&5\end{smallmatrix}\to\begin{smallmatrix} 3\\4\end{smallmatrix}\to
\begin{smallmatrix} 2\\3\end{smallmatrix}\to\stackrel{\point}{\begin{smallmatrix} 1\\2\end{smallmatrix}}\}
\]
The indecomposable objects belonging to the right basic torsion pairs are:
\[\cX_0=\{\begin{smallmatrix} 1\end{smallmatrix}, \begin{smallmatrix} 2\end{smallmatrix}, \begin{smallmatrix} 3\end{smallmatrix}, \begin{smallmatrix} 4\end{smallmatrix}, \begin{smallmatrix} 6\end{smallmatrix},\begin{smallmatrix} 1\\2\end{smallmatrix}, \begin{smallmatrix} 2\\3\end{smallmatrix}, \begin{smallmatrix} 3\\4\end{smallmatrix},\begin{smallmatrix} 4&&6\\&5\end{smallmatrix}\},\ \cY_0=\{\begin{smallmatrix} 5\end{smallmatrix},\begin{smallmatrix} 4\\5\end{smallmatrix}, \begin{smallmatrix} 6\\5\end{smallmatrix}\}\]
\[\cX_1=\{\begin{smallmatrix} 1\end{smallmatrix}, \begin{smallmatrix} 2\end{smallmatrix},  \begin{smallmatrix} 3\end{smallmatrix}, \begin{smallmatrix} 6\end{smallmatrix},\begin{smallmatrix} 1\\2\end{smallmatrix}, \begin{smallmatrix} 2\\3\end{smallmatrix}, \begin{smallmatrix} 3\\4\end{smallmatrix},\begin{smallmatrix} 4&&6\\&5\end{smallmatrix},\begin{smallmatrix} 4\\5\end{smallmatrix}\!\!\begin{smallmatrix} [1]\end{smallmatrix}, \begin{smallmatrix} 4\\5\end{smallmatrix}\to\stackrel{\point}{\begin{smallmatrix} 3\\4\end{smallmatrix}}, \begin{smallmatrix} 4&&6\\&5\end{smallmatrix}\to\stackrel{\point}{\begin{smallmatrix} 3\\4\end{smallmatrix}}
\},\ \cY_1=\{ \begin{smallmatrix} 6\\5\end{smallmatrix}\!\!\begin{smallmatrix} [1]\end{smallmatrix}\}\]
\[\cX_2=\{\begin{smallmatrix} 1\end{smallmatrix}, \begin{smallmatrix} 2\end{smallmatrix},  \begin{smallmatrix} 6\end{smallmatrix},\begin{smallmatrix} 1\\2\end{smallmatrix}, \begin{smallmatrix} 2\\3\end{smallmatrix}, \begin{smallmatrix} 3\\4\end{smallmatrix},\begin{smallmatrix} 4&&6\\&5\end{smallmatrix},\begin{smallmatrix} 4\\5\end{smallmatrix}\!\!\begin{smallmatrix} [1]\end{smallmatrix}, \begin{smallmatrix} 4\\5\end{smallmatrix}\to\stackrel{\point}{\begin{smallmatrix} 3\\4\end{smallmatrix}}, \begin{smallmatrix} 4&&6\\&5\end{smallmatrix}\to\stackrel{\point}{\begin{smallmatrix} 3\\4\end{smallmatrix}}, \begin{smallmatrix} 4&&6\\&5\end{smallmatrix}\to\begin{smallmatrix} 3\\4\end{smallmatrix}\to\stackrel{\point}{\begin{smallmatrix} 2\\3\end{smallmatrix}}
\},\ \cY_2=\{ \begin{smallmatrix} 6\\5\end{smallmatrix}\!\!\begin{smallmatrix} [2]\end{smallmatrix}\}\]
The simple modules $\begin{smallmatrix} 3\end{smallmatrix}$, $\begin{smallmatrix} 4\end{smallmatrix}$ and $\begin{smallmatrix}5\end{smallmatrix}$ are not  $\cT$-static; since they are simple, they admit in $\R\lMod$ only trivial decompositions. Let us construct their $t$-trees:
{\tiny{\[\xymatrix@-1.5pc{
&&&&&&&{\begin{smallmatrix}3\end{smallmatrix}}\ar@{->>}[drrrr]
\\
&&&{\begin{smallmatrix}3\end{smallmatrix}}\ar@{^(->}[urrrr]\ar@{->>}[drr]&&&&&&&&
{\begin{smallmatrix}0\end{smallmatrix}}\ar@{->>}[drr]
\\
&{\begin{smallmatrix}3\end{smallmatrix}}\ar@{^(->}[urr]\ar@{->>}[dr]&&&&
{\begin{smallmatrix}0\end{smallmatrix}}\ar@{->>}[dr]&&&&
{\begin{smallmatrix}0\end{smallmatrix}}\ar@{^(->}[urr]\ar@{->>}[dr]&&&&
{\begin{smallmatrix}0\end{smallmatrix}}\ar@{->>}[dr]
\\
{\begin{smallmatrix}4&&6\\&5\end{smallmatrix}}
\to\stackrel{\point}{\begin{smallmatrix} 3\\4\end{smallmatrix}}\ar@{^(->}[ur]&&
{\begin{smallmatrix} 6\\5\end{smallmatrix}}[2]&&
{\begin{smallmatrix}0\end{smallmatrix}}\ar@{^(->}[ur]&&
{\begin{smallmatrix}0\end{smallmatrix}}&&
{\begin{smallmatrix}0\end{smallmatrix}}\ar@{^(->}[ur]&&
{\begin{smallmatrix}0\end{smallmatrix}}&&
{\begin{smallmatrix}0\end{smallmatrix}}\ar@{^(->}[ur]&&
{\begin{smallmatrix}0\end{smallmatrix}}
}\]}}

{\tiny{\[\xymatrix@-1.5pc{
&&&&&&&{\begin{smallmatrix}4\end{smallmatrix}}\ar@{->>}[drrrr]
\\
&&&{\begin{smallmatrix}4\end{smallmatrix}}\ar@{^(->}[urrrr]\ar@{->>}[drr]
&&&&&&&&
{\begin{smallmatrix}0\end{smallmatrix}}\ar@{->>}[drr]
\\
&{\begin{smallmatrix}4&&6\\&5\end{smallmatrix}}\ar@{^(->}[urr]\ar@{->>}[dr]
&&&&
{\begin{smallmatrix}6\\5\end{smallmatrix}}[1]\ar@{->>}[dr]
&&&&
{\begin{smallmatrix}0\end{smallmatrix}}\ar@{^(->}[urr]\ar@{->>}[dr]
&&&&
{\begin{smallmatrix}0\end{smallmatrix}}\ar@{->>}[dr]
\\
{\begin{smallmatrix}4&&6\\&5\end{smallmatrix}}
\ar@{^(->}[ur]&&
{\begin{smallmatrix} 0\end{smallmatrix}}&&
{\begin{smallmatrix}0\end{smallmatrix}}\ar@{^(->}[ur]&&
{\begin{smallmatrix}6\\5\end{smallmatrix}}[1]&&
{\begin{smallmatrix}0\end{smallmatrix}}\ar@{^(->}[ur]&&
{\begin{smallmatrix}0\end{smallmatrix}}&&
{\begin{smallmatrix}0\end{smallmatrix}}\ar@{^(->}[ur]&&
{\begin{smallmatrix}0\end{smallmatrix}}
}\]}}

{\tiny{\[\xymatrix@-1.5pc{
&&&&&&&{\begin{smallmatrix}5\end{smallmatrix}}\ar@{->>}[drrrr]
\\
&&&{\begin{smallmatrix}0\end{smallmatrix}}\ar@{^(->}[urrrr]\ar@{->>}[drr]&&&&&&&&
{\begin{smallmatrix}5\end{smallmatrix}}\ar@{->>}[drr]
\\
&{\begin{smallmatrix}0\end{smallmatrix}}\ar@{^(->}[urr]\ar@{->>}[dr]&&&&
{\begin{smallmatrix}0\end{smallmatrix}}\ar@{->>}[dr]&&&&
{\begin{smallmatrix}6\end{smallmatrix}}[-1]\ar@{^(->}[urr]\ar@{->>}[dr]&&&&
{\begin{smallmatrix}6\\ 5\end{smallmatrix}}\ar@{->>}[dr]
\\
{\begin{smallmatrix}0\end{smallmatrix}}
\ar@{^(->}[ur]&&
{\begin{smallmatrix}0\end{smallmatrix}}&&
{\begin{smallmatrix}0\end{smallmatrix}}\ar@{^(->}[ur]&&
{\begin{smallmatrix}0\end{smallmatrix}}&&
{\begin{smallmatrix}6\end{smallmatrix}}[-1]\ar@{^(->}[ur]&&
{\begin{smallmatrix}0\end{smallmatrix}}&&
{\begin{smallmatrix}0\end{smallmatrix}}\ar@{^(->}[ur]&&
{\begin{smallmatrix}6\\5\end{smallmatrix}}
}\]}}
The $t$-leaves in the $t$-trees are $\cT$-static. In particular
the simple left $R$-module $\begin{smallmatrix} 3\end{smallmatrix}$ has a leaf of degree 0 and one of degree 1; the simple left $R$-module $\begin{smallmatrix} 4\end{smallmatrix}$ has a leaf of degree 0 and one of degree 2; the simple left $R$-module $\begin{smallmatrix} 5\end{smallmatrix}$ has a leaf of degree 1 and one of degree 3.
\end{example}

\subsection{The compatible case}
This section is devoted to the applications of Sections \ref{sect:cloth} and \ref{Sec:ttree}
to the case of \emph{compatible} $t$-structures.

The concept of compatible $t$-structures has been first 
introduced by Keller and Vossieck in
\cite{zbMATH04083879} and it has
been recently studied independently by Bondal in \cite{Bondal}
under the name of \emph{consistent pairs of $t$-structures}.
We adopt
the notation of Keller and Vossieck.

\begin{definition}\label{ComTSt}
Let $\mathcal D:=(\mathcal D^{\leq 0}, \mathcal D^{\geq 0})$ and $\mathcal T:=(\mathcal T^{\leq 0}, \mathcal T^{\geq 0})$ be two $t$-structures in a triangulated category 
$\mathcal C$. We denote by $\delta$ and $\tau$ the truncation functors associated with 
$\mathcal D$ and $\mathcal T$, respectively.
The $t$-structure $\mathcal T$ is called:
\begin{enumerate}
\item \emph{left $\cD$-compatible} if $\mathcal T^{\leq 0}$ is stable under the truncation functors $\delta^{\leq n}$, $n\in\mathbb Z$;
\item \emph{right $\cD$-compatible} if $\mathcal T^{\geq 0}$ is stable under the truncation functors $\delta^{\geq n}$, $n\in\mathbb Z$.
\end{enumerate}
\end{definition}

It is not hard to check that if $\cT$ is left $\cD$-compatible, then $\mathcal T^{\leq 0}$ is also  stable under the truncation functors $\delta^{\geq n}$ and therefore 
$H_{\mathcal D}^n(\mathcal T^{\leq 0})[-n]\subseteq \mathcal T^{\leq 0}$  
for each $n\in\mathbb Z$. 
Analogously if $\cT$ is right $\cD$-compatible, then $\mathcal T^{\geq 0}$ is also  stable under the truncation functors $\delta^{\leq n}$ and therefore $H_{\mathcal D}^n(\mathcal T^{\geq 0})[-n]\subseteq \mathcal T^{\geq 0}$ for each  $n\in\mathbb Z$.

\begin{remark}
In \cite{Bondal}, Bondal defined a pair of $t$-structures $(\cD,\cT)$ to be \emph{lower consistent} if
$\delta^{\leq 0}\cTll0\subseteq \cTll0$. 
So the $t$-structure $\cT$ is left $\cD$-compatible if and only if $(\cD[n],\cT)$ is lower consistent for any $n\in\Bbb Z$.
\end{remark}

Let us recall the principal result of Keller and Vossieck concerning a left compatible $t$-structures. 
The statement of the following proposition involves the concept of \emph{bounded $t$-structure} $\cD$ in $\cC$
i.e., for any $X\in\cD$ there exist $m<n$ in $\Bbb Z$ such that $X\in\cD^{[m,n]}$.
In particular any bounded $t$-structure is non degenerate.

\begin{proposition}\label{Keller}
\cite{zbMATH04083879}
Let $\cD$ and $\cT$
be two bounded $t$-structures on the triangulated category 
$\mathcal C$. 
The following are equivalent:
\begin{enumerate}
\item $\cT$ is left $\cD$-compatible;
\item $\cTll0=\left\{X\in \cC \; | \; H_\cD^i(X)\in \cH_\cD \cap \cTll{-i}, \text{~for all~} i\in\bZ \right\}$;
\item we have
\begin{enumerate}
\item $H_\cT^iH_\cD^j(X)=0$, for all $X\in \cH_\cT$ and $i+j>0$,
\item for each morphism $f \colon Y \to Y'$ in $\cH_\cD$ with $Y\in \cTll i$ and $Y' \in \cTll {i-1}$, 
we have $\Ker(f) \in \cTll i$ and $\Coker(f) \in \cTll {i-1}$.
\end{enumerate}
\end{enumerate}
\end{proposition}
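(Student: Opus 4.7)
My plan is to prove the equivalences in a cycle $(1) \Leftrightarrow (2)$ and $(2) \Leftrightarrow (3)$, using boundedness of $\cD$ and $\cT$ to decompose objects via their cohomologies. For $(2) \Rightarrow (1)$, I would observe that $H_\cD^i(\delta^{\leq n} X) = H_\cD^i(X)$ for $i \leq n$ and vanishes otherwise, so whenever $X$ satisfies the cohomological characterization of $\cTll0$ in (2) the same holds for $\delta^{\leq n} X$, giving stability of $\cTll0$ under $\delta^{\leq n}$. Conversely for $(1) \Rightarrow (2)$, compatibility implies $\cTll0$ is stable under both $\delta^{\leq n}$ and $\delta^{\geq n}$, hence under the composite extracting $H_\cD^i(X)[-i] = \delta^{\geq i}\delta^{\leq i}X$, yielding one inclusion. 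The reverse inclusion in (2) uses boundedness of $\cD$: any $X$ with each $H_\cD^i(X) \in \cTll{-i}$ is reconstructed by finitely many extensions from its shifted cohomologies $H_\cD^i(X)[-i]$, each belonging to $\cTll0$ by hypothesis, and $\cTll0$ is closed under extensions.

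For $(2) \Rightarrow (3a)$, an object $X \in \cH_\cT \subseteq \cTll0$ satisfies by (2) that each $H_\cD^j(X)$ lies in $\cTll{-j}$, which is exactly $H_\cT^i H_\cD^j(X) = 0$ for $i+j > 0$. For $(2) \Rightarrow (3b)$, given $f \colon Y \to Y'$ in $\cH_\cD$ with $Y \in \cTll{i}$ and $Y' \in \cTll{i-1}$, I would examine $\Cone(f)$. The long exact sequence of $\cT$-cohomology of the triangle $Y \to Y' \to \Cone(f) \to$, combined with $H_\cT^k(Y) = 0$ for $k > i$ and $H_\cT^k(Y') = 0$ for $k \geq i$, forces $H_\cT^k(\Cone(f)) = 0$ for all $k \geq i$, so $\Cone(f) \in \cTll{i-1}$. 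Since $Y, Y' \in \cH_\cD$, the long exact sequence of $\cD$-cohomology yields $H_\cD^{-1}(\Cone(f)) = \Ker(f)$, $H_\cD^0(\Cone(f)) = \Coker(f)$, and zero elsewhere. Applying (2) to $\Cone(f) \in \cTll{i-1}$ then delivers $\Ker(f) \in \cTll{i}$ and $\Coker(f) \in \cTll{i-1}$.

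For the remaining implication $(3) \Rightarrow (2)$, I would induct on the $\cT$-length of $X \in \cTll0$, which is finite by boundedness of $\cT$. More generally I aim to show that $X \in \cTll{j}$ of $\cT$-length $m$ implies $H_\cD^i(X) \in \cTll{j-i}$ for all $i$. The base case $m=0$, namely $X \in \cH_\cT[-j]$, reduces by shifting to $\cH_\cT$ and follows directly from (3a). The inductive step rests on the triangle $\tau^{\leq j-1} X \to X \to H_\cT^j(X)[-j] \overset{+1}\to$, whose long exact sequence of $\cD$-cohomology presents each $H_\cD^i(X)$ as an extension of the cokernel of a connecting map $\partial \colon H_\cD^{i-1}(H_\cT^j(X)[-j]) \to H_\cD^i(\tau^{\leq j-1}X)$ by the kernel of a second connecting map $\partial' \colon H_\cD^i(H_\cT^j(X)[-j]) \to H_\cD^{i+1}(\tau^{\leq j-1}X)$. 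By the inductive hypothesis, the sources and targets of $\partial, \partial'$ lie in appropriate $\cT$-aisles, and (3b)---possibly applied iteratively through intermediate $\cT$-truncation levels, since the $\cT$-levels of source and target may differ by more than one---controls the $\cT$-vanishing of the relevant kernels and cokernels, so $H_\cD^i(X) \in \cTll{j-i}$ follows from extension-closedness of $\cTll{j-i}$. The main obstacle is precisely this last step: bridging the gap between the hypothesis of (3b), which treats a map between two consecutive $\cT$-levels, and the connecting maps of the LES, which may span several levels; the careful bookkeeping needed to factor these connecting maps through a tower of intermediate $\cT$-truncations, so that (3b) applies at each successive rung, is where the subtle part of the argument lies.
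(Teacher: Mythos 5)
The paper does not prove this proposition: it is quoted verbatim from Keller--Vossieck \cite{zbMATH04083879}, so there is no in-paper argument to compare yours against. Your proposal is, however, essentially correct, and every implication you sketch does go through. One remark on the step you flag as the ``main obstacle'': no factoring of the connecting maps through a tower of intermediate $\cT$-truncations is needed. Since the aisles are nested, $\cTll{m}\subseteq\cTll{m+1}$, condition (3b) applies \emph{directly} to a connecting homomorphism whose source and target levels differ by more than one: in the triangle $\tau^{\leq j-1}X\to X\to H^j_\cT(X)[-j]\overset{+1}\to$ with $X\in\cTll{j}$, the inductive hypothesis puts $H^{i-1}_\cD(H^j_\cT(X)[-j])\in\cTll{j-i+1}$ and $H^i_\cD(\tau^{\leq j-1}X)\in\cTll{j-i-1}\subseteq\cTll{j-i}$, so (3b) applied at level $j-i+1$ gives $\Coker(\partial)\in\cTll{j-i}$; likewise $H^i_\cD(H^j_\cT(X)[-j])\in\cTll{j-i}$ and $H^{i+1}_\cD(\tau^{\leq j-1}X)\in\cTll{j-i-2}\subseteq\cTll{j-i-1}$, so (3b) applied at level $j-i$ gives $\Ker(\partial')\in\cTll{j-i}$. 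The short exact sequence $0\to\Coker(\partial)\to H^i_\cD(X)\to\Ker(\partial')\to 0$ in $\cH_\cD$ and extension-closure of $\cTll{j-i}$ then finish the inductive step, so the argument closes exactly as you set it up, with less bookkeeping than you feared.
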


\begin{remark}\label{rem:comtilted}
Let $\cD$ be a $t$-structure in a triangulated category $\cC$,  $(\cX,\cY)$ be a torsion pair in $\cH_\cD$ and $\cT_{(\cX,\cY)}$ the $t$-structure associated to $(\cX,\cY)$  (see Proposition~\ref{prop:HRS}). 
Note that $\cT_{(\cX,\cY)}$ is both left and right $\cD$-compatible.
Indeed by Proposition~\ref{prop:HRS} one has:
\[
\begin{matrix}
\cT^{\leq 0}_{(\cX,\cY)}= & \{ C\in \cC\; | \; H_\cD^0(C)\in\cX,\; H_\cD^i(C)=0 \;  \forall i>0 \} \hfill\\
\cT^{\geq 0}_{(\cX,\cY)}= & \{C\in \cC\; | \; H_\cD^{-1}(C)\in\cY,\; H_\cD^i(C)=0 \;  \forall i<-1 \} \\
\end{matrix}
\]
which are stable by both $\delta^{\leq n}$ and $\delta^{\geq n}$ for any $n\in\Bbb Z$. 
On the other side, $\cD$ is both left and right $\cT_{(\cX,\cY)}$-compatible:
we can regard $\cD$ as the $t$-structure 
obtained by tilting $\cT_{(\cX,\cY)}[-1]$ with respect to the torsion pair
$(\cY[0],\cX[-1])$ which proves that both $\cD^{\leq 0}$ and $\cD^{\geq 0}$ are stable by
 $\tau^{\leq n}$ and $\tau^{\geq n}$ for any $n\in\Bbb Z$. 
\end{remark}

Let us recall that by point (ii) of Lemma~\ref{lemma:esempi} if $\cT$ is left $\cD$-compatible, then the pair $(\cD,\cT)$ of $t$-structures is left filterable; then we can associate to this pair its left basic $t$-structures ${}_i\cD$
whose aisle is ${}_i\cD^{\leq 0}=\cDll0\cap\cTll{n-i}$ by Definition~\ref{def:{}_iD}.

\begin{proposition}\label{ComTh}
Let $(\cD,\cT)$ be pair of $t$-structures of type $(n,0)$ in  
$\mathcal C$ and assume $\cT$ is left $\cD$-compatible. 
Then for any $0\leq i\leq j\leq n$ the $t$-structure ${}_{j}\cD$ is left ${ }_i\cD$-compatible.
\end{proposition}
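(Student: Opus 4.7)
The plan is to give an explicit formula for the truncation functor ${}_i\delta^{\leq m}$ associated with ${}_i\cD$, and then verify stability of ${}_j\cD^{\leq 0}$ by a short case split. Throughout I use that by Definition~\ref{def:{}_iD} the shifted aisle is ${}_i\cD^{\leq m}=\cD^{\leq m}\cap\cT^{\leq n-i+m}$.

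The preparatory observation is that left $\cD$-compatibility of $\cT$ upgrades from $\cT^{\leq 0}$ to every $\cT^{\leq \ell}$: using the shift identity $\delta^{\leq m}(Y)[\ell]=\delta^{\leq m-\ell}(Y[\ell])$, stability of $\cT^{\leq 0}$ under all $\delta^{\leq k}$ translates to stability of $\cT^{\leq \ell}$ under every $\delta^{\leq m}$.

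I next identify the truncation functor as the composite ${}_i\delta^{\leq m}=\delta^{\leq m}\circ\tau^{\leq n-i+m}$. The right-hand side lies in $\cD^{\leq m}\cap\cT^{\leq n-i+m}={}_i\cD^{\leq m}$ by the preparatory observation, and the successive adjunctions
\[\Hom(Y,X)\cong\Hom(Y,\tau^{\leq n-i+m}(X))\cong\Hom(Y,\delta^{\leq m}(\tau^{\leq n-i+m}(X)))\]
(valid for any $Y\in{}_i\cD^{\leq m}\subseteq\cT^{\leq n-i+m}\cap\cD^{\leq m}$) identify this composite as the right adjoint to the inclusion. Note that the reverse composite $\tau^{\leq n-i+m}\circ\delta^{\leq m}$ would require the (unavailable) \emph{right} compatibility of $\cT$ with $\cD$.

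To conclude, I check stability. Take $X\in{}_j\cD^{\leq 0}=\cD^{\leq 0}\cap\cT^{\leq n-j}$. For $m\geq 0$ the chain ${}_j\cD^{\leq 0}\subseteq{}_i\cD^{\leq 0}\subseteq{}_i\cD^{\leq m}$, valid since $i\leq j$ forces $\cT^{\leq n-j}\subseteq\cT^{\leq n-i}$, gives ${}_i\delta^{\leq m}(X)=X\in{}_j\cD^{\leq 0}$. For $m<0$ the formula places ${}_i\delta^{\leq m}(X)$ in $\cD^{\leq m}\subseteq\cD^{\leq 0}$ automatically; for the $\cT$-condition I observe that $\tau^{\leq n-i+m}(X)\in\cT^{\leq n-j}$ either because $n-i+m\leq n-j$, whence the truncation lands in $\cT^{\leq n-i+m}\subseteq\cT^{\leq n-j}$, or because $X\in\cT^{\leq n-j}\subseteq\cT^{\leq n-i+m}$, whence the truncation returns $X$. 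Applying $\delta^{\leq m}$ then preserves $\cT^{\leq n-j}$ by the preparatory observation. The main obstacle is the explicit formula for ${}_i\delta^{\leq m}$; once it is in hand the verification is purely formal and no appeal to Proposition~\ref{Keller} or any boundedness hypothesis is required.
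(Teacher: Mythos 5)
Your proof is correct and follows essentially the same route as the paper: both identify the truncation functor of ${}_i\cD$ as ${}_i\delta^{\leq m}=\delta^{\leq m}\circ\tau^{\leq n-i+m}$ (the paper cites Lemma~\ref{lemma:esempi}, you re-derive the adjunction directly) and then verify by a short case analysis that this composite preserves ${}_j\cD^{\leq 0}=\cD^{\leq 0}\cap\cT^{\leq n-j}$, using left $\cD$-compatibility for the $\cT$-condition and the trivial inclusion $\delta^{\leq m}(\cC)\subseteq\cD^{\leq m}$ (or the identity-on-the-aisle observation) for the $\cD$-condition. The only cosmetic difference is where the case split is placed ($m\geq 0$ versus $m<0$ for you, the sign of $h$ only for the $\cD$-part in the paper), which does not change the substance.
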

\begin{proof}
Let us recall that since $\cT$ is left $\cD$-compatible and
${}_i\cD^{\leq 0}=\cDll0\cap\cTll{n-i}$, 
then  by Lemma~\ref{lemma:esempi} the truncation functor associated to ${}_i\cD$
is ${}_i\sigma^{\leq h}:=\delta^{\leq h}\tau^{\leq n-i+h}$.
Let $0\leq i\leq j\leq n$ and
$h\in\Bbb Z$; since $\cT$ is left $\cD$-compatible, we have:
\[
{}_i\sigma^{\leq h}({}_j\cDll0)= 
\delta^{\leq h}\tau^{\leq n-i+h}(\cDll0\cap\cTll {n-j})\subseteq \delta^{\leq h}(\cT^{\leq n-j})\subseteq\cTll {n-j}.
\]
Next, for any $h\leq 0$ it is clear that 
\[{}_i\sigma^{\leq h}({}_j\cDll0)= 
\delta^{\leq h}\tau^{\leq n-i+h}(\cDll0\cap\cTll {n-j})
\subseteq\cDll {h}\subseteq \cDll 0;\]
on the other side if $h>0$ we have $0\leq i\leq j<j+h$ and so
$n-i+h>n-j$ which implies:
\[{}_i\sigma^{\leq h}({}_j\cDll0)= 
\delta^{\leq h}\tau^{\leq n-i+h}(\cDll0\cap\cTll {n-j})=
\delta^{\leq h}(\cDll0\cap\cTll {n-j})
\subseteq\delta^{\leq h}(\cDll0)\subseteq \cDll 0.\]
Therefore ${}_i\sigma^{\leq h}({}_j\cDll0)\subseteq
\cD^{\leq 0}\cap\cT^{\leq n-j}=
{}_j\cDll0$
which proves that ${}_{j}\cD$ is left ${ }_i\cD$-compatible.
\end{proof}

\begin{lemma}\label{eqccotor}
Let $(\cD,\cT)$ be pair of $t$-structures of type $(n,0)$ in  
$\mathcal C$ and assume $\cT$ is left $\cD$-compatible. Then 
for any fixed $0\leq k\leq n-1$, the left basic torsion classes of $(\cD,\cT)$ satisfy 
$${}_k\cX:={ }_k\cH\cap\cTll{n-k-1}={}_i\cH\cap\cTll{n-k-1}=\cH_\cD\cap\cTll{n-k-1}
\quad \text{for any}\quad 0\leq i\leq k$$
and hence
\[ {}_{n-1}\cX\subseteq {}_{n-2}\cX\subseteq \cdots \subseteq {}_1\cX\subseteq {}_0\cX\subseteq \cH_\cD=: {}_{-1}\cX.\]
Moreover for any
$f:X_{k-1}\to X_{k}$ with $0\leq k\leq n-1$, 
$X_{k-1}\in {}_{k-1}\cX$ and $X_{k}\in {}_{k}\cX$ we have
$${\Ker}(f)=H_\cD^{-1}\Cone(f)\in {}_{k-1}\cX \qquad
{\Coker}(f)=H_\cD^{0}\Cone(f)\in {}_{k}\cX$$
(kernels and cokernels are computed in $\cH_\cD$ while
$\Cone(f)$ indicates the mapping cone of $f$ in $\cC$).
\end{lemma}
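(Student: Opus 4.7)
The plan is to first unwind the definition: by construction ${}_k\cX = {}_{k+1}\cD^{\leq 0}\cap{}_k\cH = (\cD^{\leq 0}\cap\cT^{\leq n-k-1})\cap{}_k\cH$, and since ${}_k\cH\subseteq{}_k\cD^{\leq 0}\subseteq \cD^{\leq 0}$, this simplifies to ${}_k\cH\cap\cT^{\leq n-k-1}$. The key technical fact I would establish first is that left $\cD$-compatibility implies a cohomological bound: if $X\in\cT^{\leq m}$, then $H_\cD^p(X)\in\cT^{\leq m-p}$ for every $p\in\bZ$. This follows because $\cT^{\leq 0}$ is stable under $\delta^{\leq n}$ by hypothesis, hence also under $\delta^{\geq n}$ (using the triangle $\delta^{\leq n-1}X\to X\to\delta^{\geq n}X\to$ and the fact that $\cT^{\leq 0}$ is closed under cones), and therefore stable under $H_\cD^n(-)[-n]=\delta^{\leq n}\delta^{\geq n}(-)$; shifting then gives the claim.

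Next I would prove the equalities ${}_k\cX={}_i\cH\cap\cT^{\leq n-k-1}=\cH_\cD\cap\cT^{\leq n-k-1}$ for $0\leq i\leq k$ by a direct two-inclusion argument. By Lemma~\ref{lemma:esempi}(ii) the truncation of ${}_i\cD$ is ${}_i\sigma^{\leq h}=\delta^{\leq h}\tau^{\leq n-i+h}$. For $Y\in\cH_\cD\cap\cT^{\leq n-k-1}$, since $n-k-1\leq n-i-1\leq n-i$ we get $Y\in\cD^{\leq 0}\cap\cT^{\leq n-i}={}_i\cD^{\leq 0}$, while ${}_i\sigma^{\leq -1}(Y)=\delta^{\leq -1}\tau^{\leq n-i-1}(Y)=\delta^{\leq -1}(Y)=0$; hence $Y\in{}_i\cH$. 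Conversely, for $Y\in{}_i\cH\cap\cT^{\leq n-k-1}$, membership of $Y$ in $\cD^{\leq 0}$ is immediate, and $\delta^{\leq -1}(Y)={}_i\sigma^{\leq -1}(Y)=0$ gives $Y\in\cD^{\geq 0}$. The chain ${}_{n-1}\cX\subseteq\cdots\subseteq{}_0\cX\subseteq\cH_\cD$ then follows immediately from the equality ${}_k\cX=\cH_\cD\cap\cT^{\leq n-k-1}$ together with the inclusions $\cT^{\leq n-k-1}\subseteq\cT^{\leq n-k}$, and the convention ${}_{-1}\cX=\cH_\cD\cap\cT^{\leq n}=\cH_\cD$ matches by type $(n,0)$.

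For the kernel/cokernel part, I would start from the triangle $X_{k-1}\stackrel{f}{\to} X_k\to \Cone(f)\to X_{k-1}[1]$. The rotation $X_k\to\Cone(f)\to X_{k-1}[1]\to$ exhibits $\Cone(f)$ as an extension of $X_{k-1}[1]$ by $X_k$; since $X_k\in\cT^{\leq n-k-1}$ and $X_{k-1}[1]\in\cT^{\leq n-k-1}$ (using $X_{k-1}\in{}_{k-1}\cX\subseteq\cT^{\leq n-k}$), closure of $\cT^{\leq n-k-1}$ under extensions gives $\Cone(f)\in\cT^{\leq n-k-1}$. Applying the cohomological bound established in paragraph one with $m=n-k-1$ yields $H_\cD^{-1}\Cone(f)\in\cT^{\leq n-k}$ and $H_\cD^{0}\Cone(f)\in\cT^{\leq n-k-1}$. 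The identifications $\Ker f=H_\cD^{-1}\Cone(f)$ and $\Coker f=H_\cD^{0}\Cone(f)$ come from the long exact sequence of $\cD$-cohomology applied to the triangle (noting $X_{k-1},X_k\in\cH_\cD$, so $H_\cD^i\Cone(f)=0$ for $i\neq-1,0$); combined with the characterisations from the previous paragraph, this gives $\Ker f\in{}_{k-1}\cX$ and $\Coker f\in{}_k\cX$.

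The main obstacle, such as it is, lies in deriving the cohomological bound $X\in\cT^{\leq m}\Rightarrow H_\cD^p(X)\in\cT^{\leq m-p}$ from the definition of left $\cD$-compatibility; the rest of the argument is essentially bookkeeping with truncation functors and a single long exact sequence. One should also be careful not to invoke Proposition~\ref{Keller}, whose full statement requires both $t$-structures to be bounded, whereas we work in full generality.
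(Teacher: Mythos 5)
Your proof is correct, and for the first half it takes a genuinely more direct route than the paper's. The paper establishes the equalities ${}_k\cX={}_i\cH\cap\cT^{\leq n-k-1}=\cH_\cD\cap\cT^{\leq n-k-1}$ by induction on the gap $n$: it invokes Proposition~\ref{ComTh} to know that $\cT$ is left ${}_i\cD$-compatible, observes that the left basic $t$-structures of the pair $({}_i\cD,\cT)$ are the ${}_\ell\cD$ with $\ell\geq i$, and thereby reduces everything to the single step ${}_1\cH\cap\cT^{\leq n-k-1}=\cH_\cD\cap\cT^{\leq n-k-1}$, which it settles by showing that $H^{-1}_\cD(X)$ lands in ${}_0\cY\cap{}_0\cX=0$. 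You instead compute directly with the explicit truncation ${}_i\sigma^{\leq h}=\delta^{\leq h}\tau^{\leq n-i+h}$ (valid precisely because of left compatibility; note that the relevant item of Lemma~\ref{lemma:esempi} is (ii') rather than (ii)) and run a two-inclusion argument; this bypasses both the induction and the appeal to Proposition~\ref{ComTh}, at the cost of re-deriving the truncation formula that the paper only records inside the proof of Proposition~\ref{ComTh}. Your preliminary bound $X\in\cT^{\leq m}\Rightarrow H^p_\cD(X)\in\cT^{\leq m-p}$ is exactly the observation the paper makes right after Definition~\ref{ComTSt}, and your handling of the kernel/cokernel part coincides with the paper's argument (whose displayed conclusion ${}_{k+1}\cX$ for the cokernel is a typo for ${}_k\cX$, as your computation $\cH_\cD\cap\cT^{\leq n-k-1}={}_k\cX$ confirms). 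Your caution about not invoking Proposition~\ref{Keller} in the unbounded setting is well placed.
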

\begin{proof}
First of all let us note that by the definition of the left basic $t$-structures
one obtains ${}_{-1}\cH=\cH_{\cD}$ and so
${}_{-1}\cX=\cH_{\cD}=\cH_{\cD}\cap\cT^{\leq n}$ which justifies the definition of ${}_{-1}\cX$.
Let us prove that  for $0\leq k\leq n-1$ we have
${}_k\cX=\cH_\cD\cap\cT^{\leq n-k-1}$. 
For $k=0$ the statement holds true for any $n\geq 1$ since 
${}_0\cX=\cH_0\cap{}_1\cD^{\leq 0}=\cH_\cD\cap\cT^{\leq n-1}$
as stated in point (5) of Lemma~\ref{lemma:inclusionileft}.
Let us now use induction on the gap $n\in\Bbb N$.
For $n=1$ we have $k=0$ and so the statement is true.
Let us suppose
$1\leq k\leq n-1$ and $n\geq 2$.
By inductive hypothesis 
our statement is true for any 
pair $(\cU,\cV)$ of $t$-structures with gap less than $n$ such that $\cV$ is left $\cU$-compatible.
In particular, for $1\leq i\leq n$, by Remark~\ref{rem:stepsleft} the pair $({ }_{i}\cD,\cT)$ is of type $(n-i,0)$ and by Proposition~\ref{ComTh} the $t$-structure $\cT$ is left ${ }_{i}\cD$-compatible. Observe that the left basic $t$-structures
of the pair $({ }_{i}\cD,\cT)$
coincide with the $t$-structures  $ { }_{\ell}\cD$ for $\ell=i,\dots, n$.
Then by inductive hypothesis
${}_k\cX:={}_k\cH\cap\cTll{n-k-1}={}_i\cH\cap\cTll{n-k-1}$
for any $1\leq i\leq k$. 
\\
It remains to prove that ${}_{1}\cH\cap\cTll{n-k-1}=\cH_\cD\cap\cTll{n-k-1}$.
We have
\[\cH_\cD\cap\cTll{n-k-1}\subseteq \cH_\cD\cap\cTll{n-1}= {}_0\cX\subseteq {}_{1}\cH\] and 
therefore $\cH_{\cD}\cap\cTll{n-k-1}\subseteq {}_1\cH\cap\cTll{n-k-1}$.
Viceversa, if $X\in {}_{1}\cH\cap\cTll{n-k-1}$ then $H^{-1}_\cD(X)\in{}_0\cY$; on the other hand $H^{-1}_\cD(X)[1]=\delta^{\leq -1}(X)$ belongs to $\cTll {n-k-1}$
by the left compatibility hypothesis and so 
$H^{-1}_\cD(X)\in  \cH_\cD\cap\cTll {n-k}\subseteq  \cH_\cD\cap\cTll {n-1}=:{}_0\cX$
which implies $H^{-1}_\cD(X)=0$ and so $X\in \cH_\cD\cap\cTll{n-k-1}$.\\
Now, let $f:X_{k-1}\to X_{k}$ with $0\leq k\leq n-1$,
$X_{k-1}\in {}_{k-1}\cX$ and $X_{k}\in {}_{k}\cX$. 
For what we have proved, $f$ is a morphism in $\cH_\cD$, $X_{k-1}\in\cTll{n-k}$ and
$X_{k}\in\cTll {n-k-1}$; therefore $\Cone(f)\in\cTll {n-k-1}$.
By the definition of the abelian structure of the heart $\cH_\cD$ and by the left compatibility we obtain
${\Ker}(f)=H_\cD^{-1}\Cone(f)=H_\cD^{0}(\Cone(f)[-1])
\in \cH_\cD\cap\cTll {n-k}={}_{k-1}\cX $ while
${\Coker}(f)=H_\cD^{0}\Cone(f)\in \cH_\cD\cap\cTll {n-k-1}= {}_{k+1}\cX$.
\end{proof}
Lemma~\ref{eqccotor} says that the left basic torsion classes ${}_k\cX$, $k=0,..., n-1$, associated to a pair $(\cD,\cT)$ of $t$-structures of type $(n,0)$ with $\cT$ left $\cD$-compatible are ``strongly closed'' with respect to homomorphic images and ``weakly hereditary''.

\begin{theorem}\label{teo:leftcomHRS}
Let $(\cD,\cT)$ be a left filterable pair of $t$-structures of type $(n,0)$. The $t$-structure $\cT$ is left $\cD$-compatible if and only if the left basic torsion classes ${}_i\cX$ lie in $\cH_\cD$; in particular they satisfy 
\[{}_{n-1}\cX\subseteq {}_{n-2}\cX\subseteq \cdots \subseteq {}_1\cX\subseteq {}_0\cX\subseteq 
\cH_\cD.\]
\end{theorem}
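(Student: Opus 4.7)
The plan is to establish the two implications of the biconditional separately, deducing the displayed chain of inclusions as a consequence of either direction. The forward implication is immediate from Lemma~\ref{eqccotor}: if $\cT$ is left $\cD$-compatible, then for each $k$ one has ${}_k\cX = \cH_\cD \cap \cT^{\leq n-k-1}$, so in particular ${}_k\cX \subseteq \cH_\cD$, and the inclusions ${}_{n-1}\cX \subseteq \cdots \subseteq {}_0\cX \subseteq \cH_\cD$ follow from the monotonicity $\cT^{\leq n-k-1} \subseteq \cT^{\leq n-k}$.

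For the converse, assume ${}_i\cX \subseteq \cH_\cD$ for all $i = 0, \ldots, n-1$. The strategy is to prove by induction on $i \in \{0,1,\ldots,n\}$ that ${}_i\cD$ is left $\cD$-compatible; the case $i = n$ is then the desired statement since $\cT = {}_n\cD$. The base case $i = 0$ is trivial, as ${}_0\cD = \cD$. For the inductive step, fix $C \in {}_{i+1}\cD^{\leq 0}$, i.e., $C \in {}_i\cD^{\leq 0}$ with $H^0_{{}_i\cD}(C) \in {}_i\cX$, and let $h \in \mathbb Z$. If $h \geq 0$, then $\delta^{\leq h}(C) = C$ since $C \in \cD^{\leq 0}$, so there is nothing to prove.

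The crux is the case $h \leq -1$, where the key insight is that the ${}_i\cD$-approximating triangle
\[
{}_i\sigma^{\leq -1}(C) \to C \to H^0_{{}_i\cD}(C) \stackrel{+1}{\to}
\]
coincides (up to unique isomorphism) with the $\cD$-approximating triangle of $C$: indeed ${}_i\sigma^{\leq -1}(C) \in {}_i\cD^{\leq -1} \subseteq \cD^{\leq -1}$, and by hypothesis $H^0_{{}_i\cD}(C) \in {}_i\cX \subseteq \cH_\cD \subseteq \cD^{\geq 0}$. Hence $\delta^{\leq -1}(C) \cong {}_i\sigma^{\leq -1}(C)$, and for any $h \leq -1$
\[
\delta^{\leq h}(C) = \delta^{\leq h}\bigl(\delta^{\leq -1}(C)\bigr) = \delta^{\leq h}\bigl({}_i\sigma^{\leq -1}(C)\bigr).
\]
By the inductive hypothesis, left $\cD$-compatibility of ${}_i\cD$ implies (after shifting, by the same argument used to derive stability of $\cT^{\leq k}$ under $\delta^{\leq h}$ from that of $\cT^{\leq 0}$) that ${}_i\cD^{\leq -1}$ is stable under $\delta^{\leq h}$, so this object lies in ${}_i\cD^{\leq -1}$, which by Lemma~\ref{lemma:inclusionileft} is contained in ${}_{i+1}\cD^{\leq 0}$. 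This closes the induction.

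The main observation driving the argument is the coincidence of the two approximating triangles: once the top ${}_i\cD$-cohomology of $C$ is known to lie in $\cH_\cD$, the $\cD$-truncation $\delta^{\leq -1}$ must agree with the ${}_i\cD$-truncation ${}_i\sigma^{\leq -1}$, which transfers the compatibility question one level down the chain ${}_0\cD \supseteq {}_1\cD \supseteq \cdots \supseteq {}_n\cD$ and makes the induction go through.
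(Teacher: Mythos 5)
Your proof is correct. The forward implication is, as you say, immediate from Lemma~\ref{eqccotor}, and your converse hinges on exactly the same mechanism as the paper's: for $C\in{}_{i+1}\cD^{\leq 0}$ the hypothesis ${}_i\cX\subseteq\cH_\cD$ forces the ${}_i\cD$-approximating triangle ${}_i\sigma^{\leq -1}C\to C\to H^0_{{}_i\cD}(C)$ to coincide with the $\cD$-approximating triangle, so $\delta^{\leq -1}C\cong{}_i\sigma^{\leq -1}C$. Where you genuinely differ is in the shape of the induction. The paper first upgrades the hypothesis to the equalities ${}_i\cX=\cH_\cD\cap\cT^{\leq n-i-1}$ (via Lemma~\ref{heart-intleft}, which also yields the chain of inclusions), and then proves by induction an explicit Keller--Vossieck-style description of each aisle, namely ${}_i\cD^{\leq 0}=\{C\mid H^k_{\cD}(C)\in{}_{i-1+k}\cX\ \text{for}\ -i+1\leq k\leq 0,\ H^k_{\cD}(C)=0\ \text{for}\ k>0\}$, from which stability under the $\delta^{\leq h}$ is read off; this description is of independent interest as a non-bounded analogue of item (2) of Proposition~\ref{Keller}. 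You instead induct directly on the weaker statement that ${}_i\cD$ is left $\cD$-compatible, using $\delta^{\leq h}C=\delta^{\leq h}(\delta^{\leq -1}C)=\delta^{\leq h}({}_i\sigma^{\leq -1}C)\in{}_i\cD^{\leq -1}\subseteq{}_{i+1}\cD^{\leq 0}$ for $h\leq -1$ (the case $h\geq 0$ being trivial). This is leaner: it bypasses both the preliminary identification of the classes ${}_i\cX$ and the cohomological description of the aisles, at the cost of not producing that description. Your treatment of the displayed chain is also fine: once the equivalence is established, left $\cD$-compatibility holds on both sides, and Lemma~\ref{eqccotor} gives ${}_k\cX=\cH_\cD\cap\cT^{\leq n-k-1}$, whence the inclusions.
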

\begin{proof}
By Lemma~\ref{eqccotor}, if $\cT$ is left $\cD$-compatible then ${}_i\cX\subseteq\cH_\cD$.
Let us assume the left basic torsion classes ${}_i\cX$ lie in $\cH_\cD$. By Lemma~\ref{heart-intleft} 
\[{}_i\cX\subseteq {}_i\cH\cap \cH_\cD=\bigcap_{\ell=0}^{i-1}{}_\ell\cX\subseteq {}_{i-1}\cX;\]
thus we get
\[{}_i\cX\subseteq {}_{i-1}\cX\cap \cT^{\leq n-i-1}\subseteq {}_i\cH\cap \cT^{\leq n-i-1}={}_i\cX.\]
Then we deduce that
\[\begin{array}{rl}
 {}_i\cX    & ={}_{i-1}\cX\cap \cT^{\leq n-i-1}={}_{i-2}\cX\cap \cT^{\leq n-i}\cap \cT^{\leq n-i-1}=  \\
    &  ={}_{i-2}\cX\cap \cT^{\leq n-i-1}=\dots=\cH_\cD\cap \cT^{\leq n-i-1}.\end{array}\]
As proved by Keller and Vossieck in the bounded case (see Proposition~\ref{Keller}), let us prove by induction on $i=1,\dots ,n$ that we have
\[
{}_i\cD^{\leq 0}=\{C\in\cC\; |\; 
H^k_{\cH_\cD}(C)\in{}_{i-1+k} \cX\; \hbox{if}\; -i+1\leq k\leq 0\; \hbox{and } 
H^k_{\cH_\cD}(C)=0\; \hbox{if}\; k>0\}
\]
which permits to conclude since they are stable by $\delta^{\leq h}$ for any $h\in\Bbb Z$.\\
For $i=1$ the left basic $t$-structure ${}_1\cD$ is obtained by tilting $\cD$ with respect to the torsion pair
$({}_0\cX,{}_0\cY)$ in $\cH_\cD$ and so:
\[{}_1\cD^{\leq 0}=\{C\in\cC\; |\; 
H^0_{\cH_\cD}(C)\in{}_{0}\cX \;\hbox{and } 
H^k_{\cH_\cD}(C)=0\; \hbox{if}\; k>0\}
\]
and hence the statement holds true.
Let assume that 
\[
{}_i\cD^{\leq 0}=\{C\in\cC\; |\; 
H^k_{\cH_\cD}(C)\in{}_{i-1+k}\cX \; \hbox{if}\; -i+1\leq k\leq 0\; \hbox{and } 
H^k_{\cH_\cD}(C)=0\; \hbox{if}\; k>0\}
\]
and let us prove that
\[
{}_{i+1}\cD^{\leq 0}=\{C\in\cC\; |\; 
H^k_{\cH_\cD}(C)\in{}_{i+k}\cX \; \hbox{if}\; -i\leq k\leq 0\; \hbox{and } 
H^k_{\cH_\cD}(C)=0\; \hbox{if}\; k>0\}.
\]
By definition 
$
{}_{i+1}\cD^{\leq 0}=\{C\in\cC\; |\; 
H^0_{{}_i\cH}(C)\in{}_{i}\cX \;  \hbox{and } 
H^k_{{}_i\cH}(C)=0\; \hbox{if}\; k>0\}
$ and by Lemma~\ref{lemma:inclusionileft} one has ${}_{i+1}\cD^{\leq 0}\subseteq {}_{i}\cD^{\leq 0}\subseteq \cD^{\leq 0}$. Therefore, first for any $C\in {}_{i+1}\cD^{\leq 0}$ and $k>0$ we get
$H^k_ {\cH_\cD}(C)=0$; next, since
${}_i\delta^{\leq -1}C\in{}_{i}\cD^{\leq -1}\subseteq \cD^{\leq -1}$
and $H^0_{{}_i\cH}(C)$ belongs to ${}_{i}\cX\subseteq {\cH_\cD}\subseteq \cD^{\geq 0}$, the distinguished triangle ${}_i\delta^{\leq -1}C\to C\to H^0_{{}_i\cH}(C)\stackrel{+1}\to $
coincides with the approximating triangle with respect to the $t$-structure $\cD$:
\[
\xymatrix{
{}_i\delta^{\leq -1}C\ar[r]\ar[d]^{\cong}& C\ar[r]\ar[d]^{\rm{id}_C} & H^0_{{}_i\cH}(C)\ar[r]^(0.7){+1}\ar[d]^{\cong} &\\
\delta^{\leq -1}C\ar[r]    & C\ar[r]   & H^0_{\cH_\cD}(C)\ar[r]^(0.7){+1}& \\
}\]
Therefore, since by inductive hypothesis we have
\[
{}_i\cD^{\leq -1}=\{C\in\cC\; |\; 
H^k_{\cH_\cD}(C)\in{}_{i+k}\cX \; \hbox{if}\; -i\leq k\leq -1\; \hbox{and } 
H^k_{\cH_\cD}(C)=0\; \hbox{if}\; k>-1\},
\]
one gets
\[
H^0_ {\cH_\cD}(C)\cong H^0_{{}_i\cH}(C)\in {}_i\cX; \;
H^k_ {\cH_\cD}(C)\cong H^k_ {\cH_\cD}({}_i\delta^{\leq -1}C)\in{}_{i+k}\cX
\hbox{ for any } -i\leq k\leq -1,
\]
and hence ${}_{i+1}\cD^{\leq 0}\subseteq \{C\in\cC\; |\; 
H^k_{\cH_\cD}(C)\in{}_{i+k}\cX \; \hbox{if}\; -i\leq k\leq 0\; \hbox{and } 
H^k_{\cH_\cD}(C)=0\; \hbox{if}\; k>0\}$.\\
Let us prove the other inclusion: consider $C\in\cC$ belonging to the right side.
By the previous description of ${}_i\cD^{\leq -1}$, due to the inductive hypothesis, we have
$\delta^{\leq -1}C\in{}_i\cD^{\leq -1}\subseteq {}_{i+1}\cD^{\leq 0}$,
while 
$\delta^{\geq 0}C=H^0_{\cH_\cD}(C)\in {}_i\cX\subseteq {}_{i+1}\cH\subseteq {}_{i+1}\cD^{\leq 0}$; then $C$ is an extension of objects in ${}_{i+1}\cD^{\leq 0}$ and hence it belongs to ${}_{i+1}\cD^{\leq 0}$.
\end{proof}

The left compatible case has been studied by Vit\'oria in
 \cite{Vit2014}. 
In particular the author considers the bounded derived category  $D^b(\cA)$ of
an AB4 abelian category $\cA$ endowed with its natural $t$-structure $\cD$.
Then he proves  in \cite[Theorem 3.13]{Vit2014} that, under a technical hypothesis,  the data of $n$  hereditary torsion classes of $\cA$ such that
\[{}_{n-1}\cX\subseteq {}_{n-2}\cX\subseteq \cdots \subseteq {}_1\cX\subseteq {}_0\cX\subseteq 
\cH_\cD=:{}_{-1}\cX\] permits to construct 
 (via an iterated HRS procedure of length $n$) a new left $\cD$-compatible
 $t$-structure $\cT$.
 So Theorem~\ref{teo:leftcomHRS} can be seen as a partial generalization of \cite{Vit2014}.

\begin{remark}
We have seen in Proposition~\ref{polisch} that by Polishchuk result a pair of $t$-structures
$(\cD,\cT)$ in a triangulated category $\cC$ verifies
$\cD^{\leq -1}\subseteq \cT^{\leq 0}\subseteq \cD^{\leq 0}$ (or equivalently
$\cD^{\geq 0}\subseteq \cT^{\geq 0}\subseteq \cD^{\geq -1}$)
if and only if $\cT$ is a $t$-structure obtained by tilting $\cD$ with respect to a torsion pair in 
$\cH_\cD$. 
Moreover we proved in Remark~\ref{rem:polisfilterable} that such a pair of $t$-structures is always both
left and right filterable and we proved in Remark~\ref{rem:comtilted} that one is both
left and right compatible with respect to the other. These remarkable properties of a pair of $t$-structures obtained by 
HRS procedure of length $1$ do not hold true for $t$-structures with gap $n\geq 2$.
Actually we can deduce from the previous theorem the following corollary.
\end{remark}
 
\begin{corollary}\label{Cor:tiltnoleftcom}
Let $(\cD,\cT)$ be a $n$-tilting pair of $t$-structures.
Then $\cT$ is left $\cD$-compatible if and only if $n=0$ or $n=1$.
\end{corollary}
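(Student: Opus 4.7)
The plan is to treat the two implications of the biconditional separately.  The ``if'' direction is immediate: for $n=0$ we have $\cD=\cT$ (compatibility is automatic), while for $n=1$ Proposition~\ref{polisch} says $\cT$ is obtained from $\cD$ by HRS tilting along a torsion pair on $\cH_\cD$, and Remark~\ref{rem:comtilted} then yields left (and right) $\cD$-compatibility of $\cT$.

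For the ``only if'' direction I argue by contradiction: assume $(\cD,\cT)$ is $n$-tilting with $n\geq 2$ and $\cT$ is left $\cD$-compatible.  By Lemma~\ref{lemma:esempi}(ii') the pair is left filterable, so the left basic $t$-structures $\cD={}_0\cD,\ldots,{}_n\cD=\cT$ and the torsion pairs $({}_i\cX,{}_i\cY)$ on ${}_i\cH$ are defined.  Theorem~\ref{teo:leftcomHRS} gives the descending chain ${}_{n-1}\cX\subseteq\cdots\subseteq{}_0\cX\subseteq\cH_\cD$, and by Lemma~\ref{eqccotor} one has ${}_{n-1}\cX=\cH_\cD\cap\cT^{\leq 0}=\cH_\cD\cap\cH_\cT$, which cogenerates $\cH_\cD$ by the tilting hypothesis.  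Now I invoke the left analogue of Lemma~\ref{lemma:tiltingpairsintermedie} on the intermediate pair $({}_1\cD,\cT)$ (of type $(n-1,0)$): it is $(n-1)$-tilting, so ${}_1\cH\cap\cH_\cT$ cogenerates ${}_1\cH$.  A short cohomological check, based solely on left compatibility, identifies ${}_1\cH\cap\cH_\cT$ with ${}_{n-1}\cX\subseteq{}_0\cX\subseteq\cH_\cD$: for $C\in{}_1\cH\cap\cH_\cT$ one has $C\in\cD^{[-1,0]}$ with $H^{-1}_\cD(C)\in{}_0\cY$, and $\delta^{\leq -1}(C)=H^{-1}_\cD(C)[1]\in\cT^{\leq 0}$ forces $H^{-1}_\cD(C)\in\cT^{\leq 1}\cap\cH_\cD\subseteq{}_0\cX$, hence $H^{-1}_\cD(C)=0$ since ${}_0\cX\cap{}_0\cY=0$.

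The contradiction is then immediate.  For any $Y\in{}_0\cY$ the object $Y[1]$ lies in ${}_1\cH$, and in fact in the torsion class ${}_0\cY[1]$ of the HRS torsion pair $({}_0\cY[1],{}_0\cX)$ on ${}_1\cH$; cogeneration of ${}_1\cH$ by ${}_{n-1}\cX\subseteq{}_0\cX$ therefore furnishes a monomorphism $Y[1]\hookrightarrow X$ with $X\in{}_0\cX$, which by torsion pair orthogonality $\Hom_{{}_1\cH}({}_0\cY[1],{}_0\cX)=0$ must vanish, forcing $Y=0$.  Hence ${}_0\cY=0$, which collapses the first HRS step to ${}_1\cD=\cD$; by Definition~\ref{def:{}_iD} this yields $\cD^{\leq 0}\subseteq\cT^{\leq n-1}$, equivalently $\cD^{\leq-(n-1)}\subseteq\cT^{\leq 0}$, contradicting the minimality of the gap $n\geq 2$.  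The main obstacle is the intermediate $(n-1)$-tilting step together with the identification ${}_1\cH\cap\cH_\cT={}_{n-1}\cX$ inside $\cH_\cD$; both require carefully unraveling the effect of left compatibility on the co-aisle of ${}_1\cD$, after which the two coexisting torsion pairs on ${}_1\cH$ deliver the vanishing cleanly.
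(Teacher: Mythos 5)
Your ``if'' direction is exactly the paper's, and your ``only if'' direction aims at the same contradiction the paper reaches (the first left basic torsion pair $({}_0\cX,{}_0\cY)$ becomes trivial, which is incompatible with the gap being $n\geq 2$). However, there is a genuine gap in how you get there. The pivotal input of your argument is that $({}_1\cD,\cT)$ is an $(n-1)$-tilting pair, i.e.\ that ${}_1\cH\cap\cH_\cT$ cogenerates ${}_1\cH$, which you justify by invoking ``the left analogue of Lemma~\ref{lemma:tiltingpairsintermedie}''. That lemma is stated and proved only for $\cC=D(\cA)$ with $\cD$ the \emph{natural} $t$-structure: its proof represents objects of the intermediate heart by complexes with entries in $\cH_\cD\cap\cH_\cT$ via Lemma~\ref{resolutions}, a device unavailable in an abstract triangulated category. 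Corollary~\ref{Cor:tiltnoleftcom}, by contrast, is stated for an arbitrary $n$-tilting pair in an arbitrary triangulated category, so your proof does not cover the statement in its full generality. Note also that you cannot bootstrap the missing cogeneration from what you already have: an object of ${}_1\cH$ decomposes into a piece in ${}_0\cY[1]$ and a piece in ${}_0\cX$, and controlling the ${}_0\cY[1]$-piece is precisely what you are trying to prove (${}_0\cY=0$), so the argument would become circular.

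The paper sidesteps this entirely by staying inside $\cH_\cD$ and exploiting the other consequence of left compatibility recorded in Lemma~\ref{eqccotor}: for a morphism $f\colon X\to Y$ in $\cH_\cD$ with $X\in{}_{k-1}\cX$ and $Y\in{}_{k}\cX$, one has $\Ker(f)\in{}_{k-1}\cX$. Given $M\in\cH_\cD$, the cogeneration hypothesis (used twice) produces a monomorphism $M\hookrightarrow X$ with $X\in{}_{n-1}\cX\subseteq{}_{n-2}\cX$ and a monomorphism of its cokernel into some $Y\in{}_{n-1}\cX$; the composite $f\colon X\to Y$ has $\Ker(f)=M$, so $M\in{}_{n-2}\cX$. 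Hence ${}_{n-2}\cX=\cH_\cD$, forcing ${}_0\cX=\cH_\cD$ when $n\geq 2$, which contradicts the nontriviality of the first HRS step for a pair of gap $n\geq 2$. If you replace your intermediate-tilting step by this kernel-closure argument, your proof goes through in full generality; as written, it only establishes the corollary in the derived-category setting of Lemma~\ref{lemma:tiltingpairsintermedie}.
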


\begin{proof}
It is clear that if $n=0$ then $\cD=\cT$ is left $\cD$-compatible and we have seen in 
Remark~\ref{rem:comtilted} that for any pair $(\cD,\cT)$ of type $(1,0)$, the $t$-structure $\cT$ is left $\cD$-compatible.
\\
Viceversa let $(\cD,\cT)$ be a $n$-tilting pair of $t$-structures such that $\cT$ is left $\cD$-compatible; then $(\cD,\cT)$ is left filterable.
By Theorem~\ref{teo:genPolleft}, the $t$-structure $\cT$ is obtained by an iterated  HRS procedure of length $n$ 
(via its left basic $t$-structures)
and by
Lemma~\ref{heart-intleft} we have
${}_0\cH\cap{}_n\cH=\bigcap_{i=0}^{n-1}{}_i\cX={}_{n-1}\cX$. Since $(\cD,\cT)$ is a $n$-tilting torsion pair we have that ${}_{n-1}\cX$ cogenerates $\cH_\cD$. 
So given an element $M\in\cH_\cD$ there exist a short exact sequence
$0\to M\stackrel{i}{\to}X\stackrel{p}{\to}\coker_{\cH_\cD}(i)\to 0$ in $\cH_\cD$ with 
$X\in {}_{n-1}\cX\subseteq{}_{n-2}\cX$
and a monomorphism
$j:\coker_{\cH_\cD}(i)\hookrightarrow Y$ with 
$Y\in {}_{n-1}\cX$.
Then we obtain: 
\[f:=j\circ p:X\longrightarrow Y\qquad \hbox{with}\quad X\in {}_{n-2}\cX,\quad Y\in {}_{n-1}\cX\]
 and so by Theorem~\ref{teo:leftcomHRS} we have
$\Ker_{\cH_\cD}(f)=M\in{}_{n-2}\cX$ for any $M\in\cH_\cD$ which proves that ${}_{n-2}\cX=\cH_\cD$ and hence
${}_{n-2}\cX=\cdots ={}_0\cX={}_{-1}\cX=\cH_\cD$. This implies that $n=0$ or $n=1$
otherwise for $n\geq 2$ we would have $ \cH_\cD\not={}_0\cX\subset \cH_\cD$ which can not occur.
\end{proof}


\end{document}